\newtheorem{theorem}{Theorem}[section]
\newtheorem{lemma}[theorem]{Lemma}
\theoremstyle{definition}
\newtheorem{definition}[theorem]{Definition}
\theoremstyle{remark}
\newtheorem{remark}[theorem]{Remark}
\numberwithin{equation}{section}
\renewcommand{\epsilon}{\varepsilon}
\begin{document}

\title{Community detection in the sparse hypergraph stochastic block model}

\author{Soumik Pal}

\address{Department of Mathematics,
University of Washington, Seattle, WA 98195}

\email{soumikpal@gmail.com}

\author{Yizhe Zhu}
\address{Department of Mathematics, University of California, San Diego, La Jolla, CA 92093}
\email{yiz084@ucsd.edu}
\thanks{S.P. was supported in part by NSF grant DMS-1612483. Y.Z. was supported in part by NSF grant DMS-1949617}

\subjclass[2000]{Primary 60C05; Secondary 05C80, 60G35}
\keywords{sparse hypergraph, random tensor, stochastic block model, self-avoiding walk, community detection}
\date{\today}

\begin{abstract}
We consider the community detection problem in sparse random hypergraphs.  Angelini et al. in \cite{angelini2015spectral} conjectured the existence of a sharp threshold on model parameters for community detection in sparse hypergraphs generated by a hypergraph stochastic block model. We solve the positive part of the conjecture for the case of two blocks: above the threshold, there is a spectral algorithm which asymptotically almost surely constructs a partition of the hypergraph correlated with the true partition. Our method is a generalization to random hypergraphs of the method developed by Massouli\'{e} in \cite{massoulie2014community} for sparse random graphs.
\end{abstract}

\maketitle

\section{Introduction}
Clustering is an important topic in  network analysis, machine learning, and computer vision \cite{hennig2015handbook}. Many clustering algorithms are based on graphs, which represent  pairwise relationships among data.  Hypergraphs can be used to represent higher-order relationships among objects, including co-authorship and citation networks, and they have been shown empirically to have advantages over graphs \cite{zhou2007learning}.  Recently hypergraphs have been used as the data model in machine learning, including recommender system \cite{tan2011using}, image retrieval \cite{liu2011hypergraph,agarwal2005beyond} and bioinformatics \cite{tian2009hypergraph}. The stochastic block model (SBM) is a generative model for random graphs with community structures, which serves as a useful benchmark for  clustering algorithms on  graph data. It is natural to have an analogous model for random hypergraphs to model higher-order relations. In this paper, we  consider a higher-order SBM called the hypergraph stochastic block model (HSBM). Before describing HSBMs, let's recall clustering on graph SBMs.

\subsection{The Stochastic block model for graphs}
In this section, we summarize the state-of-the-art results for graph SBM with two blocks of roughly equal size.
Let $\Sigma_n$ be the set of all pairs $(G, \sigma)$, where $G=([n],E)$ is a graph with vertex set $[n]$ and edge set $E$, $\sigma=(\sigma_1,\dots, \sigma_n)\in \{+1,-1\}^n$ are spins on $[n]$, i.e., each vertex $i\in[n]$ is assigned with a spin $\sigma_i\in \{-1,+1\}$. From this finite set $\Sigma_n$, one can generate a random element $(G,\sigma)$ in two steps. 
\begin{enumerate}
\item First generate i.i.d random variables $\sigma_i\in\{-1,+1\}$ equally likely for all $i\in [n]$.
\item 	Then given $\sigma=(\sigma_1,\dots, \sigma_n)$, we generate a random graph $G$ where each edge $\{i,j\}$ is included independently with probability $p$ if $\sigma_i=\sigma_j$ and with probability $q$ if $\sigma_i\not=\sigma_j$.
\end{enumerate}
The law of this pair $(G,\sigma)$ will be denoted by $\mathcal G(n,p,q)$.  In particular, we are interested in the model $\mathcal G(n,p_n,q_n)$ where $p_n,q_n$ are parameters depending on $n$. We use the shorthand notation $\mathbb P_{\mathcal G_n}$ to emphasize that the integration is taken under the law $\mathcal G(n,p_n,q_n)$.

Imagine  $C_1=\{i:\sigma_i=+1\}$ and $C_2=\{i:\sigma_i=-1\}$ as   two communities in the graph $G$. Observing only $G$  from a  sample $(G,\sigma)$ from the distribution $\mathcal G(n,p_n,q_n)$, the goal of community detection is to estimate the unknown vector $\sigma$ up to a sign flip. Namely, we  construct  label estimators $\hat{\sigma}_i\in \{\pm 1\}$ for each $i$ and consider the empirical overlap between $\hat{\sigma}$ and unknown $\sigma$ defined by 
	\begin{align}\label{ov}
	ov_n(\hat{\sigma},\sigma):=\frac{1}{n}\sum_{i\in [n]}\sigma_i\hat{\sigma}_{i}.\end{align} 
  We may ask the following questions about the estimation as $n$ tends to infinity: 
\begin{enumerate}
	\item Exact recovery (strong consistency): 
	\begin{align*}
	\lim_{n\to\infty}\mathbb P_{\mathcal G_n}\left(\{ov_n(\hat{\sigma},\sigma)=1\}\cup \{ov_n(\hat{\sigma},\sigma)=-1\}\right)=1.	
	\end{align*}

		\item Almost exact recovery (weak consistency):  for any $\epsilon>0$,
		\begin{align*} 
			\lim_{n\to\infty}\mathbb P_{\mathcal G_n}\left(\{|ov_n(\hat{\sigma},\sigma)-1|>\epsilon\}\cap \{|ov_n(\hat{\sigma},\sigma)+1|>\epsilon\}\right)=0.
		\end{align*}
	\item Detection: Find a partition which is correlated with the true partition. More precisely, there exists a constant $r>0$ such that it satisfies the following: for any $\epsilon>0$,
 \begin{align}
 	\lim_{n\to\infty}\mathbb P_{\mathcal G_n}(\{|ov(\hat{\sigma},\sigma)-r|>\epsilon\}\cap \{|ov(\hat{\sigma},\sigma)+r|>\epsilon\} )=0.
 \end{align}
\end{enumerate}

There are many works on these questions using different tools, we list some of them. A conjecture of  \cite{decelle2011asymptotic} based on non-rigorous ideas from statistical physics predicts a threshold of  detection in the SBM, which is  called the Kesten-Stigum threshold. In particular, if $p_n=\frac{a}{n}$ and $q_n=\frac{b}{n}$ where $a,b$ are positive constants independent of $n$, then the detection is possible if and only if $(a-b)^2>2(a+b)$. This conjecture was confirmed in \cite{mossel2015reconstruction,mossel2018proof,massoulie2014community,bordenave2018nonbacktracking} where \cite{mossel2018proof,massoulie2014community,bordenave2018nonbacktracking} provided efficient algorithms to achieve the threshold. Very recently, two alternative spectral algorithms were proposed based on distance matrices \cite{stephan2018robustness} and a graph powering method in \cite{abbe2018graph}, and they both achieved the detection threshold.

Suppose $p_n=\frac{a\log n}{n}, q_n=\frac{b\log n}{n}$ where $a,b$ are constant independent of $n$. Then the exact recovery is possible if and only if $(\sqrt{a}-\sqrt{b})^2>2$, which was solved in \cite{abbe2016exact,hajek2016achieving} with efficient algorithms achieving the threshold. 
Besides the phase transition behavior, various algorithms were proposed and analyzed in different regimes and more general settings beyond the $2$-block SBM \cite{brito2016recovery,chien2018community,guedon2016community,abbe2018proof,krzakala2013spectral,mossel2016belief,cole2020exact,stephan2020non,bordenave2020detection}, including spectral methods, semidefinite programming, belief-propagation, and approximate message-passing algorithms. We recommend \cite{abbe2018community} for further details.

\subsection{Hypergraph stochastic block models}

The hypergraph stochastic block model (HSBM) is a generalization of the SBM for graphs, which was first studied in \cite{ghoshdastidar2014consistency}, where the authors consider hypergraphs generated by the stochastic block models that are dense and uniform. A faithful representation of a hypergraph is its adjacency tensor (see Definition \ref{def:tensor}). However, most of the computations involving tensors are NP-hard \cite{hillar2013most}. Instead,
they considered spectral algorithms for exact recovery using hypergraph Laplacians. Subsequently, they extended their results to sparse, non-uniform hypergraphs \cite{ghoshdastidar2015provable,ghoshdastidar2015spectral,ghoshdastidar2017consistency}. 
For exact recovery, it was shown that the phase transition occurs in the regime of logarithmic average degrees in \cite{lin2017fundamental,chien2018minimax,chien2018community} and the exact threshold was given in \cite{kim2018stochastic}, by a generalization of the techniques in \cite{abbe2016exact}.  Almost exact recovery for HSBMs was studied in \cite{chien2018community,chien2018minimax,ghoshdastidar2017consistency}.

For detection of the HSBM with two blocks, the authors of \cite{angelini2015spectral} proposed a conjecture that the phase transition occurs in the regime of constant average degree, based on the performance of the belief-propagation algorithm. Also, they conjectured a spectral algorithm based on non-backtracking operators on hypergraphs could reach the threshold. In \cite{florescu2016spectral}, the authors showed  an algorithm for detection when the  average degree is bigger than some constant by reducing it to a bipartite stochastic block model. They also mentioned a barrier to further improvement.  We confirm the positive part of the conjecture in \cite{angelini2015spectral} for the case of two blocks: above the threshold, there is a spectral algorithm which asymptotically almost surely constructs a partition of the hypergraph  correlated with the true partition.

 Now we specify our $d$-uniform hypergraph stochastic block model with two clusters. Analogous to $\mathcal G(n,p_n,q_n)$, we define $\mathcal H(n,d,p_n,q_n)$ for $d$-uniform hypergraphs. Let $\Sigma_n$ be  the set of  all  pair $(H, \sigma)$, where $H=([n],E)$ is a $d$-uniform hypergraph (see Definition \ref{def:hypergraph} below) with vertex set $[n]$ and hyperedge set $E$, $\sigma=(\sigma_1,\dots, \sigma_n)\in \{+1,-1\}^n$ are the spins on $[n]$. From this finite set $\Sigma_n$, one can generate a random element $(H,\sigma)$ in two steps. 
\begin{enumerate}
\item First generate i.i.d random variables $\sigma_i\in\{-1,+1\}$ equally likely for all $i\in [n]$.
\item 	Then given $\sigma=(\sigma_1,\dots, \sigma_n)$, we generate a random hypergraph $H$ where each hyperedge $\{i_1, \dots i_d\}$ is included independently with probability $p_n$ if $\sigma_{i_1}=\dots =\sigma_{i_d}$ and with probability $q_n$ if the spins  $\sigma_{i_1},\dots \sigma_{i_d}$ are not the same.\end{enumerate}
The law of this pair $(H,\sigma)$ will be denoted by $\mathcal H(n,d,p_n,q_n)$. We use the shorthand notation $\mathbb P_{\mathcal H_n}$  and  $\mathbb E_{\mathcal H_n}$ to emphasize that integration is taken under the law $\mathcal H(n,d,p_n,q_n)$.  Often we drop the index $n$ from our notation, but it will be clear from  $\mathbb P_{\mathcal H_n}$.

\begin{figure}
\includegraphics[width= 0.25\linewidth]{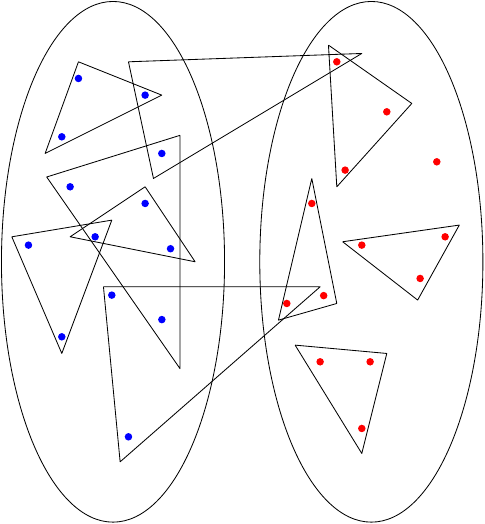}	
\caption{A HSBM with $d=3$. Vertices in blue and red have spin $+$ and $-$, respectively.}
\end{figure}

  \subsection{Main results}
 We consider the detection problem of the HSBM in the constant expected degree regime.
  Let \[p_n:=\frac{a}{{{n}\choose {d-1}}}, \quad   q_n:=\frac{b}{{{n}\choose {d-1}}}\] for some constants $a\geq b>0$ and a constant integer $d\geq 3$. Let
   \begin{align}\label{alphabeta}
\alpha:=(d-1)\frac{a+(2^{d-1}-1)b}{2^{d-1}},\quad \beta:=(d-1)\frac{a-b}{2^{d-1}}.\end{align}
  Here $\alpha$ is a constant which measures the expected degree of any vertex, and $\beta$ measures the discrepancy between the number of neighbors with $+$ sign and $-$ sign of any vertex.  For $d=2$,  $\alpha,\beta$ are the same parameters for the graph case  in \cite{massoulie2014community}.  Now we are able to state our main result which is an extension of the result of for graph SBMs in \cite{massoulie2014community}.  Note that with the definition of $\alpha,\beta$, we have $\alpha>\beta$. The condition $\beta^2>\alpha$ in the statement of Theorem \eqref{main}  below implies	 $\alpha,\beta>1$, which will be assumed for the rest of the paper. 
  \begin{theorem}\label{main}
	Assume $\beta^2>\alpha$. Let $(H,\sigma)$ be a random labeled hypergraph sampled from $\mathcal H(n,d,p_n,q_n)$ and  $B^{(l)}$ be its $l$-th self-avoiding matrix (see Definition \ref{SAM} below). Set $l= c\log(n)$ for a constant $c$ such that $c\log(\alpha)<1/8$.  Let $x$ be a $l_2$-normalized eigenvector corresponding to the second largest  eigenvalue of $B^{(l)}$. There exists a constant $t$ such that, if we define the label estimator $\hat{\sigma}_i$ as 
	\begin{align*}
		\hat{\sigma}_i=\begin{cases}
			+1 & \text{if $x_i\geq t/\sqrt{n}$,}\\
			-1 & \text{otherwise},
		\end{cases}
	\end{align*}
	then detection is possible. More precisely, there exists a constant $r>0$ such that the empirical overlap between $\hat{\sigma}$ and $\sigma$ defined similar to \eqref{ov} satisfies the following: 
	for any $\epsilon>0$,
 \begin{align*}
 	\lim_{n\to\infty}\mathbb P_{\mathcal H_n}\left(\{|ov_n(\hat{\sigma},\sigma)-r|>\epsilon\}\bigcap \{|ov_n(\hat{\sigma},\sigma)+r|>\epsilon\} \right)=0.
 \end{align*}
\end{theorem}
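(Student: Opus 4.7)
The plan is to adapt Massoulié's self-avoiding walk strategy to the hypergraph setting. First, one notes that in the sparse regime the adjacency tensor (or its flattened matrix) has a spectrum polluted by high-degree vertices, so the natural object of study is the matrix $B^{(l)}$ whose $(i,j)$ entry counts self-avoiding walks of length $l$ from $i$ to $j$ through hyperedges of $H$. The entire argument is organised around three estimates: computing the expectation $\mathbb{E}_{\mathcal H_n}[B^{(l)}]$, coupling the random $B^{(l)}$ with this expectation (concentration), and extracting community labels from its second eigenvector via a perturbation argument.

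Step one is the expectation computation. A self-avoiding walk of length $l$ in $H$ that uses hyperedges of sizes $d$ contributes a factor $p_n$ or $q_n$ per hyperedge depending on whether the hyperedge is monochromatic. Summing and using $\sigma_i \in \{\pm 1\}$, one checks (after the standard local tree-like expansion) that, to leading order,
\begin{equation*}
\mathbb{E}_{\mathcal H_n}\bigl[B^{(l)}\bigr] \;\approx\; \frac{\alpha^l}{n}\, \mathbf{1}\mathbf{1}^{T} \;+\; \frac{\beta^l}{n}\, \sigma \sigma^{T},
\end{equation*}
with the definitions of $\alpha,\beta$ in \eqref{alphabeta} arising precisely as the eigenvalues of the $2\times 2$ "type matrix" that propagates label-biased expectations from one hyperedge to the next along a self-avoiding walk. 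Thus the expected operator has top eigenpair $(\alpha^{l}, \mathbf{1}/\sqrt{n})$ and second eigenpair $(\beta^{l}, \sigma/\sqrt{n})$, and the entire strategy is to show these two eigenpairs survive for the unexpected random $B^{(l)}$ itself.

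Step two, which I expect to be the main obstacle, is the spectral-norm concentration bound
\begin{equation*}
\bigl\| B^{(l)} - \mathbb{E}_{\mathcal H_n} [B^{(l)}] \bigr\| \;=\; O\bigl(\alpha^{l/2}\,\mathrm{polylog}(n)\bigr)
\end{equation*}
with high probability. Following Massoulié, I would carry this out by a trace method: bound $\mathbb{E}\,\mathrm{tr}\bigl((B^{(l)}-\mathbb{E} B^{(l)})^{2k}\bigr)$ for $k$ growing slowly, which reduces the problem to enumerating pairs of self-avoiding walks that share many hyperedges. The hypergraph nature complicates this combinatorics considerably compared with the graph case: two self-avoiding walks can meet inside a single hyperedge at several vertices simultaneously, so the usual graph-excess arguments must be refined by working with the bipartite incidence structure of $H$ (vertices vs.\ hyperedges) and tracking both vertex- and hyperedge-excesses. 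One also needs a companion local-structure result, namely that the $l$-neighbourhood of a typical vertex is with high probability tree-like (as a hypergraph) and couples to a multi-type Poisson Galton-Watson hypertree whose mean matrix has Perron eigenvalue $\alpha$ and subdominant eigenvalue $\beta$. This coupling both justifies the leading-order computation of $\mathbb{E}[B^{(l)}]$ and keeps tangling at the "hairy" vertices under control so that the second-moment count closes.

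The final step is routine once the first two are in place. Choosing $l = c \log n$ with $c\log\alpha < 1/8$ ensures that $\alpha^{l/2}\,\mathrm{polylog}(n) = o(\beta^{l})$ under the assumption $\beta^2 > \alpha$, so the signal $\beta^{l}$ strictly separates from the noise floor. A Davis-Kahan bound then shows that the second eigenvector $x$ of $B^{(l)}$ satisfies
\begin{equation*}
\bigl\| x - \tfrac{1}{\sqrt{n}}\,\sigma \bigr\|_2 \;=\; o(1)
\end{equation*}
up to sign. A standard Markov argument implies that for an appropriate threshold $t>0$ a positive fraction of the coordinates $x_i$ are on the correct side of $t/\sqrt{n}$, which yields the threshold estimator $\hat\sigma$ with overlap bounded below by some constant $r>0$ in probability, establishing the conclusion of the theorem.
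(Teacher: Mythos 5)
Your outline correctly identifies several ingredients that the paper also uses (the self-avoiding matrix $B^{(l)}$, a trace/moment bound involving concatenated self-avoiding walks with refined vertex- and hyperedge-excess bookkeeping, and the coupling of local neighborhoods with a multi-type Poisson Galton--Watson hypertree with mean eigenvalues $\alpha,\beta$). However, the core of your argument has a genuine gap. The concentration claim $\|B^{(l)}-\mathbb{E}_{\mathcal H_n}[B^{(l)}]\| = O(\alpha^{l/2}\mathrm{polylog}(n))$ is false in this sparse regime: testing against the unit vector $\sigma/\sqrt n$, one has $(B^{(l)}\sigma)_i \approx D_l(i)=\sum_{j:d(i,j)=l}\sigma_j$, and the coupling with the hypertree shows $\beta^{-l}\sigma_i D_l(i)$ converges to the martingale limit $\Delta_\infty$, which has strictly positive variance; hence $\|(B^{(l)}-\mathbb{E}[B^{(l)}\mid\sigma])\sigma\|_2/\sqrt n = \Theta(\beta^l) \gg \alpha^{l/2}$ when $\beta^2>\alpha$. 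This is exactly why the paper does not center $B^{(l)}$ by its expectation; instead it centers the edge weights \emph{inside} each walk product (the matrix $\Delta^{(l)}$ of \eqref{Delta_ijl}, which is not $B^{(l)}-\mathbb{E}B^{(l)}$), proves the expansion \eqref{matrixexpansion} relating $B^{(l)}$, $\Delta^{(l)}$, $\overline A$ and the correction terms $\Gamma^{(l,m)}$, and only obtains a \emph{weak Ramanujan} bound: $\|B^{(l)}x\|_2\le n^{\epsilon}\alpha^{l/2}$ for $x$ orthogonal to the two vectors $B^{(l)}\mathbf{1}$ and $B^{(l)}\sigma$ (Theorem \ref{thm:endsection}), not an operator-norm bound on the whole space.

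Consequently your final step also fails: Davis--Kahan applied to your (false) concentration bound would give $\|x-\sigma/\sqrt n\|_2=o(1)$, i.e.\ overlap tending to $1$, which is impossible at constant average degree (a positive fraction of vertices have uninformative neighborhoods, so almost exact recovery cannot hold, and indeed the theorem only claims overlap converging to some constant $r>0$, not $r=1$). The correct conclusion, as in Theorem \ref{rama3}, is that the second eigenvector of $B^{(l)}$ is asymptotically aligned with $\vec D_l/\|\vec D_l\|_2$, a genuinely random direction that is only partially correlated with $\sigma$. The overlap of the threshold estimator must then be computed through the hypertree martingale: by Lemma \ref{thresholding}, the empirical distribution of the coordinates $\beta^{-l}\sigma_iD_l(i)$ converges to that of $\Delta_\infty$, and one obtains $r=\mathbb{P}(\Delta_\infty\ge\tau)-\mathbb{P}(-\Delta_\infty\ge\tau)\in(0,1)$ for a suitable continuity point $\tau$. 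Without replacing your Step two by the walk-level centering and expansion (and the accompanying local growth estimates for $S_t,D_t$ and tangle-freeness needed to control $B^{(l)}\mathbf 1$ and $B^{(l)}\sigma$), and your Step three by this alignment-with-$\vec D_l$ plus thresholding argument, the proof does not go through.
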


\begin{remark} If we take $d=2$, the condition $\beta^2>\alpha$ is the threshold for detection in graph SBMs proved in \cite{massoulie2014community,mossel2015reconstruction,mossel2018proof}.  When $d\geq 3$,
  the conjectured detection threshold for HSBMs is given in Equation (48) of \cite{angelini2015spectral}. With our notations, in the 2-block case, Equation (48) in \cite{angelini2015spectral} can be written as
$\frac{\alpha-\beta}{\alpha+\beta}=\frac{\sqrt{\alpha}-1}{\sqrt{\alpha}+1}$, which says $\beta^2=\alpha$ is the conjectured detection threshold for HSBMs. This is an analog of the Kesten-Stigum threshold proved in the graph case \cite{decelle2011asymptotic,mossel2015reconstruction,mossel2018proof,massoulie2014community,bordenave2018nonbacktracking}. Our Theorem \ref{main} proves the positive part of the conjecture.
\end{remark}

Our algorithm can be summarized in two steps. The first step is  a dimension reduction:  $B^{(l)}$ has $n^2$ many entries from the original adjacency tensor $T$ (see Definition \ref{def:tensor}) of $n^d$ many entries. Since the $l$-neighborhood of any vertex contains at most one cycle with high probability (see Lemma \ref{tangle}), by breadth-first search, the matrix $B^{(l)}$ can be constructed in polynomial time. The second step is a simple spectral clustering according to leading eigenvectors as the common clustering algorithm in the graph case.   

Unlike graph SBMs, in the HSBMs, the random hypergraph $H$ we observe is essentially a random tensor. Getting the spectral information of a tensor is NP-hard \cite{hillar2013most} in general,  making the corresponding problems in HSBMs very different from graph SBMs. It
is not immediately clear which operator  to associate to
$H$ that encodes the community structure in the bounded expected degree regime. The novelty of our method is a way to project the random tensor into  matrix forms (the self-avoiding matrix $B^{(l)}$ and the adjacency matrix $A$) that give us the community structure from their leading eigenvectors. In practice, the hypergraphs we  observed are usually not $d$-uniform, which can not be  represented as a tensor. However, we can still construct the matrix $B^{(l)}$ since the definition of self-avoiding walks does not depend on the uniformity assumption. In this paper, we focus on the $d$-uniform case to simplify the presentation, but our proof techniques can be applied to the non-uniform case.

The analysis of HSBMs is  harder than the original  graph SBMs due to the extra dependency in the hypergraph structure and the lack of linear algebra tools for tensors. To overcome these difficulties,  new techniques are developed in this paper to establish the desired results.

There are multiple ways to define self-avoiding walks on hypergraphs, and our definition  (see Definition \ref{def:walk}) is the only one that works for us when applying the moment method. We  develop a moment method suitable for sparse random hypergraphs in Section \ref{sec:matrixmomentmethod} that controls the spectral norms by counting concatenations of self-avoiding walks on hypergraphs.  The combinatorial counting argument  in the proof of Lemma \ref{expectboundmoment}  is more involved as we need to consider labeled vertices and labeled hyperedges. The moment method for hypergraphs developed here could be of independent interest for other random hypergraph problems.

The growth control of the size of the local neighborhood (Section \ref{sec:localanalysis}) for HSBMs turns out to be more challenging compared to graph SBMs in \cite{massoulie2014community}  due to the dependency between the number of vertices with spin $+$ and $-$, and overlaps between different hyperedges. We use a new second-moment estimate to obtain a matching lower bound and upper bound for the size of the neighborhoods in the proof of Theorem \ref{growthbound}.
The issues mentioned above do not appear in the sparse random graph case.

To analyze the local structure of HSBMs, we prove a new  coupling result between a typical neighborhood of a vertex in the sparse random hypergraph $H$ and a multi-type Galton-Watson hypertree described in Section \ref{sec:coupling}, which is a stronger version of local weak convergence of sparse random hypergraphs (local weak convergence for hypergraphs was recently introduced in \cite{delgosha2018load}). Compared to the classical 2-type Galton-Watson tree in the graph case, the vertex $\pm$ labels in a hyperedge is not assigned independently. We carefully designed the probability of different types of hyperedges that appear in the hypertree to match the local structure of the HSBM.  Combining all the new ingredients, we obtain the weak Ramanujan property of $B^{(l)}$ for sparse HSBMs in Theorem \ref{rama3} as a generalization of the results in \cite{massoulie2014community}.  We conclude the proof of  our  Theorem \ref{main} in Section \ref{sec:maintheorem}. 

Our Theorem \ref{main} deals with the positive part of the phase transition conjecture in \cite{angelini2015spectral}.
To have a complete characterization of the phase transition,  one needs to show an impossibility result when $\beta^2<\alpha$. Namely, below this threshold, no algorithms (even with exponential running time) will solve the detection problem  with high probability. For graph SBMs, the impossibility result was proved in \cite{mossel2015reconstruction} based on a reduction to the broadcasting problem  on Galton-Watson trees analyzed in \cite{evans2000broadcasting}. To answer the corresponding problem in the HSBMs, one needs to establish a similar information-theoretical lower bound for the broadcasting problem on hypertrees and relate the problem to the detection problem on HSBMs. To the best of our knowledge, even for the very first step,  the broadcasting problem on hypertrees has not been studied yet. The  multi-type Galton-Watson hypertrees described in Section \ref{sec:coupling} can be used as a model to study this type of problem on hypergraphs. We leave it as a future direction.

\section{Preliminaries}\label{sec:prep}

 \begin{definition}[hypergraph]\label{def:hypergraph}
 	A  \textit{hypergraph} 
 	$H$ is a pair $H=(V,E)$ where $V$ is a set of vertices and $E$ is the set of non-empty subsets of $V$ called \textit{hyperedges}. If any hyperedge $e\in E$ is a set of $d$ elements of $V$, we call $H$ \textit{$d$-uniform}. In particular, $2$-uniform hypergraph is an ordinary graph. A $d$-uniform hypergraph is complete if any set of $d$ vertices is a hyperedge and we denote a  complete $d$-uniform hypergraph on $[n]$ by $K_{n,d}$. The \textit{degree} of a vertex $i\in V$ is the number of hyperedges in $H$ that contains $i$.
 \end{definition}
 
 \begin{definition}[adjacency tensor]\label{def:tensor}
 Let $H=(V,E)$ be a $d$-uniform hypergraph with $V=[n]$. We define $T$ to be the adjacency tensor of $H$ such that
 	for any set of  vertices $ \{ i_1,i_2,\dots ,i_d\}$,
 	 \begin{align*}
	T_{i_1,\dots,i_d}=\begin{cases}
		1 &  \text{if } \{i_1,\dots, i_d\}\in E,\\
		0 & \text{otherwise.}
	\end{cases}
\end{align*}
We set $T_{\sigma(i_1),\sigma({i_2}),\dots, \sigma(i_d)} = T_{i_1,\dots, i_d}$ for any permutation $\sigma$.    We may  write $T_e$ in place of $T_{i_1, \ldots, i_d}$ where $e = \{i_1, \ldots, i_d\}$.   \end{definition}

 \begin{definition}[adjacency matrix] The adjacency matrix $A$ of a $d$-uniform hypergraph $H=(V,E)$ with vertex set $[n]$ is a $n\times n$ symmetric matrix such that for any $i\not=j$, $A_{ij}$ is the number of hyperedges in $E$ which contains $i,j$ and $A_{ii}=0$ for $i\in [n]$. Equivalently, we have
 \begin{align*}
	A_{ij}=\begin{cases}
		\sum_{e: \{i,j\}\in e} T_e &  \text{if } i\not=j,\\
		0 & \text{if } i=j.
	\end{cases}
\end{align*}  

 \end{definition}
 
\begin{definition}[walk]\label{def:walk}
	A \textit{walk} of length $l$ on a hypergraph $H$ is a sequence $(i_0,e_1,i_1,\cdots ,e_l, i_l)$ such that $i_{j-1}\not=i_{j}$ and $\{i_{j-1},i_j\}\subset e_j$ for all $1\leq j\leq l$. A walk is closed if $i_0=i_l$ and we call it a \textit{circuit}.  A \textit{self-avoiding walk} of length $l$ is a walk $(i_0,e_1,i_1,\cdots, e_l,i_l)$ such that
  \begin{enumerate}
  	\item $|\{i_0,i_1,\dots ,i_l\}|=l+1$.
  	\item Any consecutive hyperedges $e_{j-1},e_{j}$ satisfy $e_{j-1}\cap e_{j}=\{i_{j-1}\}$ for $2\leq j\leq l$.
  	\item Any two hyperedges $e_j,e_k$ with $1\leq j<k\leq l, k\not=j+1$ satisfy  $e_j\cap e_k=\emptyset$.
  \end{enumerate} 
  \begin{figure}
      \centering
      \includegraphics[width=0.25\linewidth]{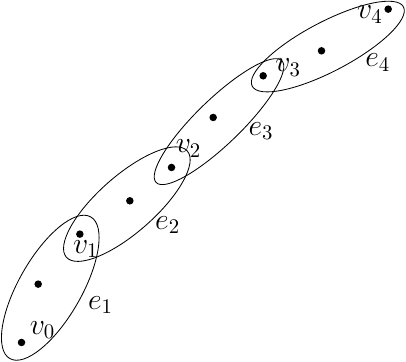}
      \caption{a self-avoiding walk of length $4$ denoted by $(v_0,e_1,v_1,e_2,v_2,e_3,v_3,e_4,v_4)$}
      \label{fig:SAW}
  \end{figure}
  
  See Figure \ref{fig:SAW} for an example of a self-avoiding walk in a $3$-uniform hypergraph.
  Recall that a self-avoiding walk of length $l$ on a graph is a walk $(i_0,\dots, i_l)$ without repeated vertices. Our definition is a generalization of the self-avoiding walk to hypergraphs.
\end{definition}

\begin{definition}[cycle and hypertree]\label{hypertree}
	A \textit{cycle} of length $l$ with $l\geq 2$ in a hypergraph $H$  is a walk $(i_0,e_1,\dots, i_{l-1},e_l,i_0)$ such that $i_0,\dots i_{l-1}$ are distinct vertices and $e_1\dots e_l$ are distinct hyperedges. A \textit{hypertree} is a hypergraph which contains no cycles.
\end{definition}
Let ${[n]\choose d}$ be the collection of all subsets of $[n]$ with size $d$. 
For any subset $e\in {[n]\choose d}$ and $i\not=j\in [n]$, we define
\begin{align*}
	A_{ij}^e=\begin{cases}
		1 &  \text{if } \{i,j\}\in e\text{ and } e\in E,\\
		0 & \text{otherwise,} 
	\end{cases}
\end{align*} and  we define $A_{ii}^e=0$ for all $i\in [n]$. With our notation above, $ A_{ij}=\sum_{e\in {[n]\choose d}} A_{ij}^e.$ We have the following expansion of the trace of $A^k$ for any integer $k\geq 0$:
\begin{align*}
\textnormal{tr}A^k&=\sum_{ i_0,i_2,\dots, i_{k-1}\in [n]}A_{i_0i_1}A_{i_2i_3}\cdots A_{i_{k-1}i_0}=\sum_{\substack{ i_0,i_1,\dots, i_{k-1}\in [n]\\ e_1,\dots, e_k\in {[n]\choose d}}}A_{i_0i_1}^{e_1}\cdots A_{i_{k-2}i_{k-1}}^{e_{k-1}}A_{i_{k-1}i_0}^{e_{k}}.
\end{align*}
Therefore, $\textnormal{tr}A^k$ counts the number of circuits $(i_0,e_1,i_1,\dots, i_{k-1},e_k,i_0)$ in the hypergraph $H$ of length $k$. This connection was used in \cite{lu2012loose} to study the spectra of the Laplacian of random hypergraphs.
 From our definition of self-avoiding walks on hypergraphs, we associate a self-avoiding adjacency matrix to the hypergraph.
  \begin{definition}[self-avoiding matrix]\label{SAM}
   Let $H=(V,E)$ be a hypergraph with $V=[n]$. For any $l\geq 1$,
a $l$-th \textit{self-avoiding matrix} $B^{(l)}$ is a $n\times n$ matrix where for $i\not=j\in [n]$, $B_{ij}^{(l)}$ counts the number of self avoiding walks of length $l$ from $i$ to $j$ and $B_{ii}^{(l)}=0$ for $i\in [n]$.
   \end{definition}
$B^{(l)}$ is a symmetric matrix since a time-reversing self avoiding walk from $i$ to $j$ is  a self avoiding walk from $j$ to $i$.
Let $\text{SAW}_{ij}$ be the set of all self-avoiding walks of length $l$ connecting $i$ and $j$ in the complete $d$-uniform hypergraph on vertex set $[n]$. We denote a walk of length $l$  by  $w=(i_0,e_{i_1},\dots, i_{l-1},e_{i_l},i_l).$  Then for any $i,j\in [n]$,
\begin{align}\label{Bij}
	B_{ij}^{(l)}=\sum_{w\in \text{SAW}_{ij}}\prod_{t=1}^l A_{i_{t-1}i_t}^{e_{i_t}}.
\end{align}

\section{Matrix expansion and spectral norm bounds}\label{sec:matrixexpan}
Consider a random labeled $d$-uniform hypergraph $H$ sampled from $\mathcal H\left(n,d,p_n,q_n\right)$ with adjacency matrix $A$ and self-avoiding matrix $B^{(l)}$. Let 
$  \overline{A}:=\mathbb E_{\mathcal H_n}[A\mid \sigma].
$
 Let 
$  \rho(A):=\sup_{x: \|x\|_2=1}\|Ax\|_2$ be the spectral norm of a matrix $A$.  Recall \eqref{Bij}, define
\begin{align}\label{Delta_ijl}
\Delta_{ij}^{(l)}:=\sum_{w\in \text{SAW}_{ij}}\prod_{t=1}^l (A_{i_{t-1}i_t}^{e_{i_t}}-\overline{A}_{i_{t-1}i_t}^{e_{i_t}}),
\end{align}
where $\overline{A}_{i_{t-1}i_t}^{e_{i_t}}=\mathbb E_{\mathcal H_n}[A_{i_{t-1}i_t}^{e_{i_t}} \mid \sigma]$. 
$\Delta^{(l)}$ can be regarded as a centered version of $B^{(l)}$. We will apply the classical moment method to estimate the spectral norm of $\Delta^{(l)}$, since this method works well for centered random variables.  Then we can relate the spectrum of $\Delta^{(l)}$ to the spectrum of $B^{(l)}$ through a matrix expansion formula
which connects $\overline{A}$, $B^{(l)}$ and $\Delta^{(l)}$ in the following theorem.  Recall the definition of $\alpha$ in \eqref{alphabeta}. 

\begin{theorem}\label{matrixexpansionthm} Let $H$ be a random hypergraph sampled from $\mathcal H\left(n,d,p_n,q_n\right)$ and $B^{(l)}$ be its $l$-th self avoiding matrix.
Then the following holds.
\begin{enumerate}
\item There exist some matrices  $\{\Gamma^{(l,m)}\}_{m=1}^l$ such that for any $l\geq 1$, $B^{(l)}$ satisfies the identity
\begin{align}
B^{(l)}=\Delta^{(l)}+\sum_{m=1}^l (\Delta^{(l-m)}\overline{A}B^{(m-1)})-\sum_{m=1}^l \Gamma^{(l,m)}.	\label{matrixexpansion}
\end{align}
\item  For any sequence $l_n=O(\log n)$ and any fixed $\epsilon>0$, 
	\begin{align}\label{specrho}
	\lim_{n\to\infty}\mathbb P_{\mathcal H_n}\left(\rho(\Delta^{(l_n)})\leq n^{\epsilon}\alpha^{l_n/2}\right) &=1,\\
	 \lim_{n\to\infty}\mathbb P_{\mathcal H_n}\left(\bigcap_{m=1}^{l_n}\left\{\rho(\Gamma^{(l_n,m)})\leq n^{\epsilon-1}\alpha^{(l_n+m)/2}\right\}\right) &=1.	\label{specrho2}
	\end{align}
	\end{enumerate}
\end{theorem}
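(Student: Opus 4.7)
The plan is to handle the two parts separately: (1) is an algebraic identity, while (2) invokes the combinatorial moment method developed in Section \ref{sec:matrixmomentmethod}.

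For part (1), I would start from the walk expansion \eqref{Bij} and write each factor as
$A_{i_{t-1} i_t}^{e_{i_t}} = \overline{A}_{i_{t-1} i_t}^{e_{i_t}} + (A_{i_{t-1} i_t}^{e_{i_t}} - \overline{A}_{i_{t-1} i_t}^{e_{i_t}})$.
Expanding produces $2^l$ terms indexed by the subset of positions choosing the centered factor. I would group them by the smallest index $t^*$ at which the $\overline{A}$ factor is chosen and set $m = l - t^* + 1$: the all-centered case contributes $\Delta^{(l)}$, and the rest contribute, formally, $(\Delta^{(l-m)}\overline{A}B^{(m-1)})_{ij}$, where positions $1,\ldots,t^*-1$ form a centered self-avoiding walk of length $l-m$, position $t^*$ is the $\overline{A}$ hinge, and positions $t^*+1,\ldots,l$ give an ordinary walk of length $m-1$. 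The matrix product, however, sums over \emph{all} such concatenations, ignoring the global conditions (1)--(3) of Definition \ref{def:walk}; I would define $\Gamma^{(l,m)}$ to be exactly this overcount, so that subtracting it recovers the walk count and yields \eqref{matrixexpansion}.

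For part (2), I apply the trace moment method. For \eqref{specrho}, the symmetry of $\Delta^{(l)}$ gives
\begin{equation*}
\rho(\Delta^{(l)})^{2k} \leq \textnormal{tr}\bigl((\Delta^{(l)})^{2k}\bigr) = \sum_{j_0,\ldots,j_{2k-1}} \prod_{s=0}^{2k-1} \Delta^{(l)}_{j_s j_{s+1}}, \qquad j_{2k} := j_0.
\end{equation*}
Expanding each entry via \eqref{Delta_ijl} turns the right-hand side into a sum over closed concatenations of $2k$ self-avoiding walks of length $l$ on the complete $d$-uniform hypergraph on $[n]$, weighted by products of centered entries. Because the hyperedge indicators are independent, taking expectation (first conditional on $\sigma$, then averaging) forces every hyperedge that appears to be used at least twice across the whole concatenation. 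Lemma \ref{expectboundmoment} supplies the combinatorial count of such doubly-covered concatenations. Taking $k = k_n$ growing slowly with $n$ and applying Markov's inequality yields \eqref{specrho}. For \eqref{specrho2}, the same scheme is applied to $\Gamma^{(l_n,m)}$: by construction, every configuration contributing to $\Gamma^{(l,m)}$ has at least one forced vertex or hyperedge identification between its centered prefix of length $l-m$ and its walk suffix of length $m-1$, which costs one free vertex index in the count, producing the additional $1/n$ factor and shifting the exponent to $\alpha^{(l_n+m)/2}$. A union bound over $m \in \{1,\ldots,l_n\}$ with $l_n = O(\log n)$ is absorbed into the $n^\epsilon$ slack.

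The main obstacle is the combinatorial enumeration encapsulated in Lemma \ref{expectboundmoment}. Compared to the graph case of \cite{massoulie2014community}, a closed concatenation of $2k$ self-avoiding walks on a hypergraph is considerably richer: one must track not only the underlying multigraph of vertex identifications but also how repeated hyperedges overlap (two distinct hyperedges can share several vertices), as well as the $\pm$ labels on each hyperedge, since $\overline{A}$ takes different values on monochromatic versus mixed hyperedges. Establishing that non-tree-like shapes carry a polynomial penalty in $n$ so that the dominant contribution is $(\alpha^l)^k$ up to subexponential factors is where the bulk of the technical work lies, and it is precisely what Lemma \ref{expectboundmoment} is designed to provide.
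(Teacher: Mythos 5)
Your part (1) and your treatment of \eqref{specrho} follow the paper's route almost verbatim: grouping the $2^l$-term expansion of $\prod_t A^{e_{i_t}}_{i_{t-1}i_t}$ by the first position at which $\overline{A}$ is taken is exactly the telescoping identity the paper starts from, and your $\Gamma^{(l,m)}$ is precisely the paper's overcount over $Q^m_{ij}\setminus \text{SAW}_{ij}$, so \eqref{matrixexpansion} goes through as you describe. For \eqref{specrho} the scheme (symmetry, $\rho(\Delta^{(l)})^{2k}\le \textnormal{tr}[(\Delta^{(l)})^{2k}]$, doubling of centered hyperedges, Markov) is the paper's; note only that a fixed $k$ with $2k\epsilon>1$ suffices and is what Lemma \ref{expectboundmoment} actually licenses, whereas letting $k=k_n\to\infty$ is not covered by the fixed-$k$ statement. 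Since essentially all of the content of \eqref{specrho} is Lemma \ref{expectboundmoment}, which you cite rather than prove, your argument for that part is an accurate outline of the paper's proof but defers its substance.

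The genuine gap is \eqref{specrho2}. First, $\Gamma^{(l,m)}$ is not symmetric, so ``the same scheme'' must be run on $\textnormal{tr}\bigl[(\Gamma^{(l,m)}(\Gamma^{(l,m)})^{\top})^k\bigr]$, as the paper does in Lemma \ref{lemma:bound_on_gamma}. Second, your accounting of the two improvements is not the paper's mechanism and, as stated, does not deliver them. The shift of the exponent to $\alpha^{(l+m)/2}$ has nothing to do with the forced prefix/suffix coincidence: it comes from the fact that the length-$(m-1)$ suffix carries \emph{uncentered} weights $A$, so those hyperedges need not be traversed twice for the expectation to survive; only the $2k(l-m)$ centered prefix hyperedges must be doubled, giving at most $k(l+m-2)$ distinct hyperedges per circuit and hence the factor $\alpha^{k(l+m-2)}$ in the $2k$-th moment. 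The factor $n^{\epsilon-1}$ is obtained in the paper not from the single coincidence implied by membership in $R^m_{ij}$, but from the $2k$ hinge factors $\overline{A}$ (each of order $\binom{n}{d-1}^{-1}$, compensated only by the $\binom{n}{d-2}$ hyperedge completions and not by a fresh vertex) together with a connectivity constraint $v\le h+1$ on the multigraph built from the non-hinge hyperedges; this is what yields $\mathbb E[\rho(\Gamma^{(l,m)})^{2k}]\le Cn^{1-2k}\alpha^{k(l+m-2)}\log^{14k}n$. Your heuristic that ``one forced identification costs one free vertex'' points in the right direction for the final order, but making it uniform over all shapes (coincidences that are hyperedge overlaps rather than vertex repetitions, suffixes re-using hyperedges of other copies, interactions with the hinge) is exactly an enumeration of the same difficulty as Lemma \ref{expectboundmoment}; the paper needs the separate Lemma \ref{lemma:bound_on_gamma} for it, and your proposal neither states nor proves the corresponding bound, so \eqref{specrho2} is not established by your sketch.
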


Theorem \ref{matrixexpansionthm} is one of the main ingredients to show $B^{(l)}$ has a spectral gap. Together with the local analysis in Section \ref{sec:localanalysis}, we will show in Theorem \ref{rama3} that the bulk eigenvalues of $B^{(l)}$ are separated from the first and second eigenvalues. The proof of Theorem \ref{matrixexpansionthm} is deferred to Section \ref{sec:matrixmomentmethod}.
The matrices $\{\Gamma^{(l,m)}\}_{m=1}^l$ in Theorem \ref{matrixexpansionthm} record concatenations of self-avoiding walks with different weights, which will be carefully analyzed in Lemma \ref{lemma:bound_on_gamma} of Section \ref{sec:matrixmomentmethod}.

\section{Local analysis}\label{sec:localanalysis}

In this section, we  study the structure of the local neighborhoods in the HSBM. Namely, what the neighborhood of a typical vertex in the random hypergraph looks like.

\begin{definition}\label{DefVt}
In a hypergraph $H$, we define the \textit{distance} $d(i,j)$ between two vertices $i,j$ to be the minimal length of walks between $i$ and $j$. Define the \textit{$t$-neighborhood} $V_t(i)$ of a fixed vertex $i$ to be the set of vertices which have distance $t$ from $i$. Define $
V_{\leq t}(i):=\bigcup_{k\leq t} V_k(i)
$ to be the set all of vertices which have distance at most $t$ from $i$ and $V_{>t}=[n]\setminus V_{\leq t}$. Let $V_t^{\pm}(i)$ be the vertices in $V_t(i)$ with spin $\pm$ and define it similarly for $V_{\leq t}^{\pm}(i)$.
\end{definition}
For $i\in [n]$, define \[ S_t(i):=|V_t(i)|, \quad 
D_t(i):=\sum_{j: d(i,j)=t}\sigma_j.	\]

Let $\mathbf{1}=(1\dots, 1)\mathbb \in \mathbb R^n$ and recall $\sigma\in \{-1,1\}^n$. 
We will show that when $l=c\log n$ with $c\log\alpha<1/8$, $S_l(i), D_l(i)$ are close to the corresponding quantities $(B^{(l)}\mathbf{1})_i, (B^{(l)}\sigma)_i$ (see Lemma \ref{rama1}). In particular, the vector $(D_l(i))_{1\leq i\leq n}$ is asymptotically aligned with the second  eigenvector of $B^{(l)}$, from which we get the information on the partitions.  
We give the following growth estimates of $S_t(i)$ and $D_t(i)$. The proof of Theorem \ref{quasi} is given in Section \ref{sec:thm43}.
\begin{theorem}\label{quasi}
	Assume $\beta^2>\alpha> 1$ and $l=c\log n$, for a constant $c$ such that $c\log\alpha<1/4$. There exists constants $C,\gamma>0$ such that for sufficiently large $n$, with probability at least $1-O(n^{-\gamma})$ the following holds for all $i\in [n]$  and  $1\leq t\leq l$:
	\begin{align}
	S_t(i)&\leq C\log (n)\alpha^t\label{claim1},\\
	|D_t(i)|&\leq C\log (n)\beta^t\label{claim2},\\
	S_t(i)&=\alpha^{t-l}S_l(i)+O(\log(n)\alpha^{t/2})\label{claim3},\\
	D_t(i)&=\beta^{t-l}D_l(i)+O(\log (n)\alpha^{t/2})\label{claim4}.
	\end{align}
\end{theorem}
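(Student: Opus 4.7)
The plan is to lift the processes $(S_t(i), D_t(i))$ from $H$ onto the multi-type Galton--Watson hypertree $\mathcal{T}$ constructed in Section \ref{sec:coupling}, analyze them on $\mathcal{T}$ by branching-process techniques, and transfer the estimates back via the coupling. On $\mathcal{T}$, the mean offspring matrix (indexed by parent spin) has leading eigenvalue $\alpha$ with eigenvector $(1,1)^\top$ and second eigenvalue $\beta$ with eigenvector $(1,-1)^\top$; iteration then gives $\mathbb{E}[S_t(i)] = \Theta(\alpha^t)$ and $\mathbb{E}[D_t(i)\mid \sigma_i] = \sigma_i \beta^t$, which are precisely the scalings being asserted in the four claims.

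First I would invoke Lemma \ref{tangle} to restrict to the high-probability event that every $l$-neighborhood is tangle-free, so the coupling error with $\mathcal{T}$ is negligible up to depth $l = c\log n$. For \eqref{claim1}, a $k$-th factorial moment computation for $S_t(i)$ on $\mathcal{T}$, combined with Markov at $k \asymp \log n$, gives $\mathbb P_{\mathcal H_n}(S_t(i) > C \log n \cdot \alpha^t) \leq n^{-3}$; a union bound over $i \in [n]$ and $t \leq l$ delivers the claim. For \eqref{claim2}, the key ingredient is the variance recursion on $\mathcal{T}$,
\begin{equation*}
\mathbb{E}[D_t^2] = \beta^2\,\mathbb{E}[D_{t-1}]^2 + c_1\,\mathbb{E}[D_{t-1}^2] + c_2\,\mathbb{E}[S_{t-1}],
\end{equation*}
which under $\beta^2 > \alpha$ closes at $\mathbb{E}[D_t^2] = O(\beta^{2t})$. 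A higher-moment version followed by a union bound then yields the uniform $C \log n \cdot \beta^t$ tail.

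For the self-similarity statements \eqref{claim3} and \eqref{claim4}, I would condition on the depth-$t$ neighborhood $\mathcal{F}_t$ and write
\begin{equation*}
S_l(i) = \sum_{j \in V_t(i)} \widetilde{S}^{(j)}_{l-t},
\end{equation*}
where $\widetilde{S}^{(j)}_{l-t}$ counts descendants at depth $l-t$ inside the subtree rooted at $j$. Under the coupling these are conditionally independent with mean $\alpha^{l-t}$ and variance $O(\alpha^{2(l-t)})$, so $\operatorname{Var}(S_l \mid \mathcal{F}_t) \lesssim S_t(i)\,\alpha^{2(l-t)}$. Combined with the \emph{a priori} bound $S_t(i) = O(\log n \cdot \alpha^t)$ from \eqref{claim1}, Chebyshev plus a union bound yields $|\alpha^{t-l} S_l(i) - S_t(i)| = O(\log(n)\,\alpha^{t/2})$. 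The analogue for $D$ uses the eigendecomposition of the mean matrix: decomposing each $\widetilde{D}^{(j)}_{l-t}$ into its mean $\sigma_j \beta^{l-t}$ plus a fluctuation of variance $O(\beta^{2(l-t)})$ and renormalizing by $\beta^{t-l}$ delivers an $O(\log n \,\alpha^{t/2})$ error, with the regime $\beta^2 > \alpha$ ensuring the $\alpha^{t/2}$ scale dominates the naive $\beta^{t-l}\cdot\beta^{l-t} = 1$ contribution.

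The main obstacle is the second-moment analysis on $\mathcal{T}$: within a single hyperedge the $d-1$ sibling spins are not independent, because the hyperedge appears with probability $p_n$ if the block is monochromatic and $q_n$ otherwise, so the offspring distribution is not a product over the $d-1$ siblings. This produces off-diagonal contributions to $\operatorname{Var}(D_t)$ from distinct siblings in the same hyperedge, and it is exactly these contributions that produce the $2^{d-1}$ denominators in the definitions of $\alpha$ and $\beta$ and that force $\mathcal{T}$ to be multi-type with hyperedges drawn by type rather than a simple 2-type Galton--Watson tree. A secondary subtlety is controlling the coupling error uniformly over the $n$ roots, which is what pins down the constraint $c\log \alpha < 1/4$ on the depth.
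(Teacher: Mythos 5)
Your route reverses the paper's logical order and, more importantly, cannot deliver the uniformity over all $n$ roots that Theorem \ref{quasi} asserts. In the paper, Theorem \ref{quasi} is proved \emph{first}, by a direct exploration of the hypergraph: conditionally on the previous layer, the counts $U_k^{(r)}$ of connected $(d-1)$-subsets are exact binomials \eqref{mean1}--\eqref{mean3}, Chernoff bounds give per-vertex, per-step failure probability $O(n^{-2-\gamma})$, and an induction with decaying multiplicative errors $\epsilon_t=\epsilon\alpha^{-(t-T)/2}$ from a stopping time $T$ with $S_T=\Theta(\log n)$ (Theorem \ref{growthbound}) survives the union bound over $i\in[n]$ and $t\le l$. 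The coupling with the multi-type hypertree (Theorem \ref{coupling2}) and Lemma \ref{tangle} are proved \emph{afterwards}, using precisely the events $\Omega_t(i)=\{S_t(i)\le C\log(n)\alpha^t\}$ established in this proof; so invoking them here is circular unless you supply an independent proof of the coupling, which would itself require growth control of the neighborhoods. Also, tangle-freeness is not the same as a successful coupling: it only limits the number of cycles and says nothing about the Poisson approximation or the spin-type probabilities.

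Even setting circularity aside, the transfer step fails quantitatively. The coupling succeeds for a \emph{single} root with probability $1-n^{-1/5}$ (driven by collision events of probability $\mathrm{polylog}(n)\,\alpha^{2t}/n$ per layer), so with high probability a growing number of vertices -- of order $\log^4(n)\alpha^{2l}$ by Lemma \ref{tangle} -- are not tree-like at depth $l$, and for those your argument gives nothing; a union bound over $n$ roots is hopeless at this failure rate. Likewise, your Chebyshev step for \eqref{claim3}--\eqref{claim4} gives, for per-vertex failure probability $O(n^{-1-\gamma})$ (the minimum needed to union bound over $i\in[n]$), a deviation of order $\alpha^{t-l}\sqrt{S_t\,\alpha^{2(l-t)}\,n^{1+\gamma}}\asymp\sqrt{S_t\,n^{1+\gamma}}$, vastly larger than the claimed $O(\log(n)\alpha^{t/2})$; second-moment recursions on the tree have the same defect for \eqref{claim2}. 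One needs exponential concentration of the layer counts conditioned on the exploration filtration (the paper's Chernoff-based induction, with the overlap corrections $W_{t+1,i}$ handled by separate Markov bounds on small counts), or genuinely high moments of order $\log n$, which your sketch does not develop.
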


The approximate independence of neighborhoods of distinct vertices is given in the following lemma. It will be used later  to analyze the martingales constructed on the Galton-Watson hypertree defined in Section \ref{sec:coupling}. The proof of Lemma \ref{approxind} is given in Appendix \ref{sec:appendix_approx}.

\begin{lemma}\label{approxind}
	For any two fixed vertices $i\not=j$, let $l=c\log(n)$ with constant $c\log(\alpha)<1/4$. Then the total variation distance between the joint law  $\mathcal L((U_{k}^{\pm}(i))_{k\leq l}, (U_k^{\pm}(j))_{k\leq l})$ and the law with the same marginals and independence between them, denoted by $\mathcal L((U_{k}^{\pm}(i))_{k\leq l}\otimes (U_k^{\pm}(j))_{k\leq l})$, is $O(n^{-\gamma})$ for some $\gamma>0$.
\end{lemma}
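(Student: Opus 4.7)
\textbf{Proof plan for Lemma~\ref{approxind}.} I would exhibit a coupling of the joint law of $((U_k^\pm(i))_{k\leq l},(U_k^\pm(j))_{k\leq l})$ with the product of its marginals that succeeds on an event of probability $1-O(n^{-\gamma})$, following a standard two-step BFS-exposure adapted to hypergraphs.

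First I would localize using Theorem~\ref{quasi}: on an event $\mathcal E$ with $\mathbb P_{\mathcal H_n}(\mathcal E^c)=O(n^{-\gamma})$, every $l$-neighborhood has size at most $N:=C(\log n)\alpha^l=n^{1/4+o(1)}$, since $c\log\alpha<1/4$. Next I would rule out the collision event $\{d(i,j)\leq 2l+1\}$. A first-moment estimate on self-avoiding walks of length $k$ from $i$ to $j$ in $K_{n,d}$, combined with the edge density $p_n=O(n^{-(d-1)})$, gives expected count of order $n^{-1}$ per length, hence $\mathbb P_{\mathcal H_n}[d(i,j)\leq 2l+1]=O(l/n)=O(n^{-1+o(1)})$. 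On this event $V_{\leq l}(i)\cap V_{\leq l}(j)=\emptyset$ and no hyperedge of $H$ is incident to both $V_{\leq l}(i)$ and $V_{\leq l}(j)$.

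Then I would build the coupling. Condition on $\sigma$ and run a BFS from $i$ in $H$ to depth $l$, which exposes the Bernoullis $\{T_e:e\cap V_{\leq l-1}(i)\neq\emptyset\}$. Conditionally the remaining Bernoullis are i.i.d.\ with their original probabilities, so I can construct an independent copy $\tilde H\sim\mathcal H(n,d,p_n,q_n)$ given $\sigma$ that reuses every unexposed $T_e$ and draws fresh independent Bernoullis for the exposed ones. The BFS from $j$ in $H$ and in $\tilde H$ then agree outcome-for-outcome unless the $\tilde H$-BFS queries a ``straddling'' potential hyperedge $e$ with $e\cap V_{\leq l-1}(i)\neq\emptyset\neq e\cap V_{\leq l-1}(j)$ whose fresh value differs from the original. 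On $\mathcal E$ the expected number of straddling potential hyperedges present in $\tilde H$ is at most $N^2\binom{n-2}{d-2}\cdot p_n=O(N^2/n)=n^{-1/2+o(1)}$, so the coupling succeeds with probability $1-O(n^{-\gamma})$. On the success event, $V_{\leq l}^\pm(j)$ read from $\tilde H$ has the correct marginal law and is independent of $V_{\leq l}^\pm(i)$ read from $H$, yielding the asserted total variation bound.

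The main obstacle is that whether a potential hyperedge counts as ``straddling'' depends on the random sets $V_{\leq l-1}(i)$ and $V_{\leq l-1}(j)$, so the re-randomization has to be performed in the correct order. I would handle this by revealing the entire BFS record from $i$ first and working inside the conditional probability space it generates: there the unexposed Bernoullis are genuinely i.i.d., and the first-moment bound of the second step controls the number of queried straddling edges, making the coupling succeed except on an event of probability $O(n^{-\gamma})$.
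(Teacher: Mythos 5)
Your coupling architecture (expose the BFS record from $i$, re-randomize the exposed Bernoullis to build an independent copy $\tilde H$, run the $j$-BFS in both copies, and charge failures to straddling hyperedges) is sound and is a genuinely different, arguably more careful, route than the paper's: the paper simply asserts that the two sequences are independent conditioned on $\{V_{\leq l}(i)\cap V_{\leq l}(j)=\emptyset\}$ and then bounds the probability of non-disjointness level by level, conditioning at each level on the growth bounds of Theorem~\ref{quasi}. Your Step 3 estimate $N^2\binom{n-2}{d-2}p_n=O(N^2/n)$ is the same kind of calculation the paper performs per level, and it correctly handles the hyperedges whose \emph{fresh} coins create a discrepancy.

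The genuine gap is in Step 2, which you need precisely to handle the other failure mode (hyperedges actually present in $H$ that meet both neighborhoods, equivalently $d(i,j)\leq 2l+1$). The first-moment count you invoke does not give ``order $n^{-1}$ per length'': a self-avoiding walk of length $k$ contributes, per step, a factor of order $n\cdot\binom{n}{d-2}\cdot\frac{a\vee b}{\binom{n}{d-1}}\approx (a\vee b)(d-1)$, so the expected number of length-$k$ paths is of order $\bigl((a\vee b)(d-1)\bigr)^k/n$, not $1/n$. With $k$ up to $2l+1=\Theta(\log n)$ this constant-to-the-$k$ is a polynomial factor $n^{2c\log((a\vee b)(d-1))}$, and the hypothesis $c\log\alpha<1/4$ does \emph{not} control it: $\alpha$ can be close to $1$ (so $c$ is large) while $(a\vee b)(d-1)$ is a much larger constant (e.g.\ $d=3$, $a=2.2$, $b=0.01$ satisfies $\beta^2>\alpha$, and one may take $c=2$, making the exponent about $5.9$), so the naive bound is vacuous in admissible parameter ranges and the claimed $O(l/n)=O(n^{-1+o(1)})$ does not follow. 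Two fixes are available: (i) average over the spins along the path, using the factorization of $\mathbb E_\sigma\prod_t p_{\sigma(e_t)}$ over hypertree-shaped hyperedge collections (the identity the paper proves en route to \eqref{treebound}), which gives expected count $O(\alpha^k/n)$ per length and total $O(\alpha^{2l}/n)=O(n^{-1/2+o(1)})$; or (ii) argue level by level as the paper does, conditioning on $S_t(i)\vee S_t(j)\leq C\log(n)\alpha^t$ and bounding, for each candidate vertex, the probability of being joined to both current boundaries, which yields a per-level failure probability $O(n^{-1/2})$ and overall $O(n^{-1/3})$. Either repair makes your coupling go through, with the final failure probability $O(n^{-\gamma})$ for some $\gamma>0$ (note the correct exponent is $n^{-1/2+o(1)}$, not $n^{-1+o(1)}$, which is still sufficient for the lemma).
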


Now we consider number of cycles in $V_{\leq l}(i)$ of any vertex $i\in [n]$.
	We say $H$ is \textit{$l$-tangle-free} if for any $i\in[n]$, there is  no more than one cycle in $V_{\leq l}(i)$. 
\begin{lemma}\label{tangle}
	Assume $l=c\log n$ with $c\log(\alpha)<1/4$. Let $(H,\sigma)$ be a sample from $\mathcal H\left(n,d,p_n,q_n\right)$. Then  
	\begin{align*}
	&\lim_{n\to\infty}\mathbb P_{\mathcal H_n}\left(|\{i\in [n]: V_{\leq l}(i) \text{ contains at least one cycle}\}|\leq \log^4(n)\alpha^{2l}\right)=1,\\	
	 &\lim_{n\to\infty}\mathbb P_{\mathcal H_n} \left( H \text{ is } l\text{-tangle-free} \right) =1.
	 \end{align*} \end{lemma}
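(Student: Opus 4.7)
The plan is a first-moment (union bound) argument over minimal tangle configurations, in the spirit of the analogous local-structure estimates for sparse random graphs in \cite{massoulie2014community}. For the first statement, I would encode the event that $V_{\leq l}(i)$ contains a cycle by exhibiting a minimal witness: a self-avoiding walk of length $a\in\{0,\dots,l\}$ from $i$ to some vertex $j$, concatenated with a cycle of length $m\geq 2$ through $j$, with all cycle vertices at distance at most $l$ from $i$ (so $a+\lceil m/2\rceil\leq l$, in particular $a+m\leq 2l$). By Definitions~\ref{def:walk} and~\ref{hypertree}, such a lollipop has exactly $a+m$ hyperedges and $(a+m)(d-1)$ vertices, since each self-avoiding step adds $d-1$ new vertices and the cycle-closing edge saves one. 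The number of labeled lollipops rooted at $i$ in the complete $d$-uniform hypergraph is at most $C(d)^{a+m}\,n^{(a+m)(d-1)-1}$, and each is present in $H$ with probability bounded by a constant times the product of the average per-edge rates $(a+(2^{d-1}-1)b)/(2^{d-1}\binom{n}{d-1})=\alpha/((d-1)\binom{n}{d-1})$ raised to the $(a+m)$-th power; this product form holds along the self-avoiding walk because the monochromaticity events on distinct hyperedges are jointly independent (each new hyperedge brings $d-2$ fresh uniform spins), and the cycle-closing edge contributes only a bounded constant factor. Combining, the expected number of lollipops at $i$ with parameters $(a,m)$ is $O(\alpha^{a+m}/n)$.

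Summing over $a+m\leq 2l$ and $i\in[n]$, the expected number of vertices with a cycle in $V_{\leq l}(i)$ is $O(\log^2(n)\,\alpha^{2l})$, and Markov's inequality with safety factor $\log^2(n)$ gives the first conclusion. For the $l$-tangle-freeness, I would repeat the argument for minimal subhypergraphs witnessing two independent cycles in $V_{\leq l}(i)$: such a witness has excess two, so a $k$-hyperedge structure has at most $k(d-1)-1$ vertices (since each extra independent cycle closes without adding new vertices), saving an extra factor of $n$ in the first-moment bound. Since the witness has at most $3l$ hyperedges, summing yields expected count per $i$ of $O(\log^3(n)\,\alpha^{3l}/n^2)$, hence total expected count $O(\log^3(n)\,\alpha^{3l}/n)=O(\log^3(n)\,n^{-1/4})=o(1)$ by $c\log\alpha<1/4$.

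The main obstacle I anticipate is the combinatorial enumeration of minimal excess-$2$ subhypergraphs, which may take several topologies (two cycles joined by a path; two cycles sharing a single vertex; two cycles sharing a hyperedge; etc.), requiring a uniform vertex-edge count across all cases. This enumeration is more delicate than in the graph setting of \cite{massoulie2014community} because two distinct hyperedges may share more than one vertex, so one needs either a careful case analysis or a general identity $v=1+e(d-1)-\mathrm{excess}$ for connected subhypergraphs. A related subtlety, already appearing in the lollipop estimate, is that on configurations containing cycles the monochromaticity events on different hyperedges are not jointly independent; this is handled either by the crude upper bound $p_n$ per edge (absorbing the loss into a $d$-dependent constant) or by a direct integration over $\sigma$ showing that the deviation from the independent product is only a multiplicative $O(1)$ factor per excess cycle, which does not affect the asymptotic conclusion.
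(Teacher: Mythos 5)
Your plan is a static first-moment enumeration of "witness" subhypergraphs, whereas the paper argues dynamically: it explores $V_{\leq l}(i)$ level by level, conditions on the growth event $S_{k-1}(i)\leq C\log(n)\alpha^{k-1}$ supplied by Theorem \ref{quasi}, classifies cycle-creating events at each level into two types (a hyperedge hitting $V_{k-1}(i)$ in two vertices, or two hyperedges from $V_{k-1}(i)$ meeting at a common new vertex), bounds these by stochastic domination with binomials, and then handles tangle-freeness by enumerating how two such events (or one event creating two cycles) can occur, ending with a union bound using $\alpha^{4l}=o(n)$. Your route avoids invoking Theorem \ref{quasi}, which is a genuine structural difference, and your first statement is essentially sound: the factorization of $\mathbb E_\sigma\prod p_{\sigma(e)}$ along tree-like configurations is exactly the paper's argument \eqref{treerecursion}--\eqref{treebound}, the crude bound $(a\vee b)/\binom{n}{d-1}$ for the cycle-closing hyperedge is fine, and overlap-degenerate configurations only lose polylog factors.

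The genuine gap is in the witness-size bounds, and it is fatal as stated for the tangle-free half. A cycle contained in $V_{\leq l}(i)$ need not have length $O(l)$: its vertices only need to be within distance $l$ of $i$ through \emph{other} paths, so your constraint $a+\lceil m/2\rceil\leq l$ (hence $a+m\leq 2l$) does not follow from the definition, and your claim that a two-cycle witness has at most $3l$ hyperedges is unjustified. The correct extraction is to route the witness through the BFS tree of the ball: if the induced subhypergraph on $V_{\leq l}(i)$ has incidence-graph excess $\geq 1$ (resp.\ $\geq 2$), take one (resp.\ two) surplus hyperedges or colliding hyperedge pairs together with the BFS tree paths from $i$ to their attachment vertices; this yields a rooted witness with at most $2l+O(1)$ (resp.\ $4l+O(1)$) hyperedges and excess $1$ (resp.\ $2$), and only then does your identity $v\leq 1+e(d-1)-\mathrm{excess}$ give the $n^{-1}$ (resp.\ $n^{-2}$) saving. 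With the corrected size $4l+O(1)$ the computation still closes, but only barely: the total expected count is $O(\mathrm{poly}(\log n)\,\alpha^{4l}/n)=o(1)$, which uses $c\log\alpha<1/4$ exactly as the paper does, not the comfortable $\alpha^{3l}/n$ margin you quote. You flagged the enumeration of excess-$2$ shapes as the main obstacle, but the missing ingredient is prior to that: the lemma asserting that non-tangle-freeness forces a rooted excess-$2$ witness with $O(l)$ hyperedges at all, which is precisely what the paper's level-by-level Type 1/Type 2 bookkeeping provides and what your sketch currently assumes.
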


The proof of Lemma \ref{tangle} is given in Appendix \ref{A4}.
In the next lemma, we translate the local analysis of the neighborhoods to the control of vectors $B^{(m)}\mathbf{1}, B^{(m)}\sigma$. The proof is similar to the proof of Lemma 4.3 in \cite{massoulie2014community}, and we include it in Appendix \ref{sec:TangleCount}. For any event $A_n$, we say $A_n$ happens \textit{asymptotically almost surely} if $\lim_{n\to\infty}\mathbb P_{\mathcal H_n}(A_n)=1$.

\begin{lemma}\label{TangleCount}
	Let $\mathcal B$ be the set of vertices $i$ whose $l-$neighborhood contains a cycle. For $l=c\log n$ with $c\log(\alpha)<1/4$, asymptotically almost surely the following holds:
	\begin{enumerate}
		\item for all $m\leq l$ and all $i\not\in \mathcal B$ the following holds
	\begin{align}
	(B^{(m-1)}\mathbf{1})_i&=\alpha^{m-1-l}(B^{(l)}\mathbf{1})_i+O(\alpha^{(m-1)/2}\log n),\label{Be}\\
	(B^{(m-1)}\sigma)_i&=\beta^{m-1-l}(B^{(l)}\sigma)_i+O(\alpha^{(m-1)/2}\log n).\label{Bs}	
	\end{align}
	\item For all $i\in \mathcal B$:
	\begin{align}\label{tanglecount}
	|(B^{(m)}\sigma)_i	|\leq |(B^{(m)}\mathbf{1})_i	|\leq 2\sum_{t=0}^m S_t(i)=O(\alpha^m \log n).
	\end{align}
	\end{enumerate}	
\end{lemma}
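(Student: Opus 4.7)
The plan is to split the argument by whether $i$ lies in $\mathcal{B}$ or not, and in each case reduce the quantities $(B^{(\cdot)}\mathbf{1})_i$ and $(B^{(\cdot)}\sigma)_i$ to the neighborhood counts $S_t(i)$ and $D_t(i)$ already controlled in Theorem \ref{quasi}. Throughout, I would work on the good event---of probability $1-o(1)$ by Lemma \ref{tangle} and Theorem \ref{quasi}---on which $H$ is $l$-tangle-free, all of the growth/concentration estimates \eqref{claim1}--\eqref{claim4} hold uniformly in $i$, and $|\mathcal{B}| \leq \log^{4}(n)\alpha^{2l}$.

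For part (1), suppose $i \notin \mathcal{B}$, so $V_{\leq l}(i)$ is acyclic in the sense of Definition \ref{hypertree}. The key combinatorial claim is that, in this case, for any $j \in [n]$ and any $k \leq l$ there is at most one self-avoiding walk of length $k$ from $i$ to $j$, and such a walk exists precisely when $d(i,j) = k$. This follows because acyclicity forces any walk from $i$ to $j$ meeting the requirements of Definition \ref{def:walk} to coincide with the unique minimal walk, whose length equals $d(i,j)$; conversely the minimal walk is automatically self-avoiding. Consequently $B^{(k)}_{ij} = \mathbf{1}\{d(i,j)=k\}$, and summing gives $(B^{(m-1)}\mathbf{1})_i = S_{m-1}(i)$, $(B^{(m-1)}\sigma)_i = D_{m-1}(i)$, and analogously $(B^{(l)}\mathbf{1})_i = S_l(i)$, $(B^{(l)}\sigma)_i = D_l(i)$. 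Substituting \eqref{claim3}--\eqref{claim4} at $t = m-1$ then yields the identities \eqref{Be} and \eqref{Bs}.

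For part (2), suppose $i \in \mathcal{B}$. By the $l$-tangle-free property, $V_{\leq l}(i)$ contains exactly one cycle, so between $i$ and any target $j$ there are at most two self-avoiding walks---the two ways of traversing the unique cycle, or just one if the walk avoids the cycle altogether. Hence $B^{(m)}_{ij} \leq 2 \cdot \mathbf{1}\{d(i,j) \leq m\}$, and summing over $j$ gives
\[
|(B^{(m)}\mathbf{1})_i| \;\leq\; 2\,|V_{\leq m}(i)| \;=\; 2\sum_{t=0}^{m} S_t(i).
\]
The bound $|(B^{(m)}\sigma)_i| \leq |(B^{(m)}\mathbf{1})_i|$ is the triangle inequality, and \eqref{claim1} of Theorem \ref{quasi} yields $\sum_{t=0}^{m} S_t(i) \leq C\log(n)\sum_{t=0}^{m}\alpha^t = O(\alpha^m \log n)$ since $\alpha > 1$.

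The main obstacle I anticipate is justifying both combinatorial counting statements within the hypergraph framework: the unique-walk property inside a hypertree and the at-most-two-walks property inside a hypergraph with one cycle. Both are direct analogues of the graph-case arguments of \cite[Lemma 4.3]{massoulie2014community}, but they must be checked against the stronger requirements of Definition \ref{def:walk}---in particular the disjointness of non-consecutive hyperedges $e_j \cap e_k = \emptyset$ for $|j-k|\geq 2$---to rule out spurious self-avoiding walks that could arise from two distinct hyperedges at a common walk vertex sharing an extraneous vertex. Once these combinatorial lemmas are secured, the rest of the proof is a mechanical combination of Theorem \ref{quasi} and Lemma \ref{tangle}, applied uniformly over $i \in [n]$ by a union bound.
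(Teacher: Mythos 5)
Your proposal is correct and follows essentially the same route as the paper's own proof: for $i\notin\mathcal B$ you identify $B^{(m)}_{ik}=\mathbf{1}_{\{d(i,k)=m\}}$ so that $(B^{(m)}\mathbf 1)_i=S_m(i)$ and $(B^{(m)}\sigma)_i=D_m(i)$ and then invoke \eqref{claim3}--\eqref{claim4}, while for $i\in\mathcal B$ you use the $l$-tangle-free property from Lemma \ref{tangle} to bound each entry by $2$ and sum against \eqref{claim1}. The combinatorial facts you flag (unique self-avoiding walk in an acyclic neighborhood, at most two with a single cycle) are asserted in the paper's proof at the same level of detail, so there is no gap relative to the paper.
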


Combining  Theorem \ref{matrixexpansionthm}, Theorem \ref{quasi}, and Lemma \ref{TangleCount},  we are able to prove the following theorem. 

\begin{theorem}\label{thm:endsection}
	Assume $\beta^2>\alpha>1$ and $l=c\log n$ with $c\log(\alpha)<1/8$. Then the following holds: for any $\epsilon>0$
	\begin{align*}
		\lim_{n\to\infty}\mathbb P_{\mathcal H_n}\left(\sup_{\|x\|_2=1,x^{\top}(B^{(l)}\mathbf{1})=x^{\top}(B^{(l)}\sigma)=0}\|B^{(l)}x\|_2\leq n^{\epsilon}\alpha^{l/2}\right)=1.
	\end{align*} 
\end{theorem}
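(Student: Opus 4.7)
The plan is to apply the matrix expansion identity \eqref{matrixexpansion} of Theorem~\ref{matrixexpansionthm} to a generic unit vector $x$ satisfying $x\perp B^{(l)}\mathbf{1}$ and $x\perp B^{(l)}\sigma$, giving
\[
B^{(l)}x \;=\; \Delta^{(l)}x \;+\; \sum_{m=1}^l \Delta^{(l-m)}\overline{A}B^{(m-1)}x \;-\; \sum_{m=1}^l \Gamma^{(l,m)}x,
\]
and bound each of the three pieces. For the first, \eqref{specrho} yields $\|\Delta^{(l)}x\|_2\leq n^\epsilon\alpha^{l/2}$ immediately. For the third, summing the bounds of \eqref{specrho2} geometrically gives $\sum_{m=1}^l\|\Gamma^{(l,m)}x\|_2 = O(n^{\epsilon-1}\alpha^l)=o(\alpha^{l/2})$ since $\alpha^l\leq n^{1/8}\ll\sqrt{n}$. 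Hence all of the work reduces to controlling the middle sum.

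The key structural input will be that $\overline{A}$ is essentially rank two. A direct count of hyperedges containing a pair $\{i,j\}$, combined with the concentration $|n_\pm-n/2|=O(\sqrt{n\log n})$, yields the decomposition
\[
\overline{A} \;=\; \frac{\alpha}{n}\mathbf{1}\mathbf{1}^\top + \frac{\beta}{n}\sigma\sigma^\top + E_n, \qquad \|E_n\|_2 = O\!\bigl(\sqrt{\log n /n}\bigr),
\]
where $E_n$ absorbs the $O(1/n)$ diagonal correction and the $O(1/\sqrt{n})$ fluctuations of the empirical constants around $(\alpha,\beta)$. Using symmetry of $B^{(m-1)}$ I then rewrite
\[
\overline{A}B^{(m-1)}x \;=\; \tfrac{\alpha}{n}\langle B^{(m-1)}\mathbf{1},x\rangle\mathbf{1} + \tfrac{\beta}{n}\langle B^{(m-1)}\sigma,x\rangle\sigma + E_nB^{(m-1)}x,
\]
so that the two orthogonality hypotheses on $x$ will directly attack the leading inner products.

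To exploit this, I apply Lemma~\ref{TangleCount} and decompose $B^{(m-1)}\mathbf{1}=\alpha^{m-1-l}B^{(l)}\mathbf{1}+W^{(m-1)}$, with $|W^{(m-1)}_i|=O(\alpha^{(m-1)/2}\log n)$ on the tangle-free set $[n]\setminus\mathcal B$ and $|W^{(m-1)}_i|=O(\alpha^{m-1}\log n)$ on $\mathcal B$. Since $|\mathcal B|\leq\alpha^{2l}\log^4 n$ by Lemma~\ref{tangle} and $c\log\alpha<1/8$ makes $\alpha^{2l}\ll n$, the $\mathcal B$-contribution is subdominant in $\|W^{(m-1)}\|_2^2$, giving $\|W^{(m-1)}\|_2=O(\sqrt{n}\,\alpha^{(m-1)/2}\log^{O(1)}n)$. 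The hypothesis $x\perp B^{(l)}\mathbf{1}$ reduces $\langle B^{(m-1)}\mathbf{1},x\rangle$ to $\langle W^{(m-1)},x\rangle$, and Cauchy--Schwarz then gives $\bigl\|\tfrac{\alpha}{n}\langle B^{(m-1)}\mathbf{1},x\rangle\mathbf{1}\bigr\|_2=O(\alpha^{(m+1)/2}\log^{O(1)}n)$; the $\sigma$-coordinate is handled identically using $x\perp B^{(l)}\sigma$ together with $\beta\leq\alpha$. For the residual $E_nB^{(m-1)}x$, the row-sum bound $\|B^{(m-1)}\|_\infty=\max_i(B^{(m-1)}\mathbf{1})_i=O(\alpha^{m-1}\log n)$, again from Lemma~\ref{TangleCount}, combined with $\|B^{(m-1)}\|_2\leq\|B^{(m-1)}\|_\infty$ for nonnegative symmetric matrices, gives the negligible $\|E_nB^{(m-1)}x\|_2=O(\sqrt{\log n/n}\cdot\alpha^{m-1}\log n)$.

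Multiplying each bound by $\rho(\Delta^{(l-m)})\leq n^\epsilon\alpha^{(l-m)/2}$ and summing over $m$ leaves the middle term at most $l\cdot n^\epsilon\alpha^{(l+1)/2}\log^{O(1)}n\leq n^{2\epsilon}\alpha^{l/2}$ for $n$ large. Replacing $\epsilon$ by $\epsilon/2$ then yields the theorem. The main obstacle in this plan is the bookkeeping for the tangled vertices $\mathcal B$, where the bound on $|W^{(m-1)}_i|$ is polynomially (in $n$) weaker than on $[n]\setminus\mathcal B$; it is precisely the hypothesis $c\log\alpha<1/8$, feeding into the tangle estimate of Lemma~\ref{tangle}, that keeps this piece subdominant and accounts for the tighter constant compared to the $1/4$ threshold of Theorem~\ref{quasi} and Lemma~\ref{tangle}.
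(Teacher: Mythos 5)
Your proposal is correct and follows essentially the same route as the paper's own proof: the expansion \eqref{matrixexpansion} with the bounds \eqref{specrho}--\eqref{specrho2}, the rank-two-plus-small-error decomposition of $\overline{A}$ (the paper's \eqref{Abar2}, which leaves the leading coefficients as $O(1/n)$ rather than identifying them as $\alpha/n,\beta/n$), the use of the two orthogonality conditions together with Lemmas \ref{TangleCount} and \ref{tangle} to reduce $\langle B^{(m-1)}\mathbf{1},x\rangle$ and $\langle B^{(m-1)}\sigma,x\rangle$ to $O(\sqrt{n}\,\alpha^{(m-1)/2}\log^{O(1)}n)$ (this is exactly the content of the paper's Lemma \ref{Cor412}, which you reprove inline via your $W^{(m-1)}$ decomposition), and the row-sum bound $\rho(B^{(m-1)})=O(\alpha^{m-1}\log n)$ for the residual term before summing over $m$. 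The only (immaterial) quibble is your closing attribution of the constant $1/8$ to the tangled-vertex bookkeeping; in this particular step $c\log\alpha<1/4$ already suffices, and the stricter exponent is needed elsewhere in the paper.
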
 
Theorem \ref{thm:endsection} is a key ingredient to prove the bulk eigenvalues of $B^{(l)}$ are $O(n^{\epsilon}\alpha^{l/2})$ in Theorem \ref{rama3}.  The proof of Theorem \ref{thm:endsection} is given in in Section \ref{sec:given}.

\section{Coupling with multi-type Poisson hypertrees}\label{sec:coupling}

Recall the definition of a hypertree from Definition \ref{hypertree}.
We construct a hypertree growth process in the following way. The hypertree is designed to obtain a coupling with the local neighborhoods of the random hypergraph $H$.
\begin{itemize}
	\item Generate a root $\rho$ with spin $\tau(\rho)=+$, then generate $ \textnormal{Pois}\left(\frac{\alpha}{d-1}\right)$ many hyperedges that only intersects at $\rho$. Call the vertices in these hyperedges except $\rho$ to be the \textit{children} of $\rho$ and of generation $1$. Call $\rho$ to be their \textit{parent}.
	\item For $0\leq r\leq d-1$, we define a hyperedge is of type $r$ if $r$ many children in the hyperedge has spin $\tau(\rho)$ and $(d-1-r)$ many children has spin $-\tau(\rho)$. We first assign a type for each hyperedge independently. Each hyperedge will be of type $(d-1)$ with probability $ \frac{(d-1)a}{\alpha 2^{d-1}}$ and of type $r$ with probability $ \frac{(d-1)b{ d-1 \choose r}}{\alpha 2^{d-1}}$ for $0\leq r\leq d-2$. Since 
$ \frac{(d-1)a}{\alpha 2^{d-1}}+\sum_{r=0}^{d-2}\frac{(d-1)b{ d-1 \choose r}}{\alpha 2^{d-1}}=1,
$  the probabilities of being various types of hyperedges add up to $1$. Because the type is chosen i.i.d for each hyperedge, by Poisson thinning, the number of hyperedges of different types are independent and Poisson. 
\item We draw the hypertree in a plane and label each child from left to right. For each type $r$ hyperedge, we uniformly randomly pick $r$ vertices among $d-1$ vertices in the first generation to put spins $\tau(\rho)$, and the rest $d-1-r$ many vertices are assigned with spins $-\tau(\rho)$.

\item After defining the first generation, we keep constructing subsequent generations by induction. For each children $v$ with spin $\tau(v)$ in the previous generation, we generate $ \textnormal{Pois}\left(\frac{\alpha}{d-1}\right)$ many hyperedges that pairwise intersects at $v$ and assign a type  to each hyperedge by the same rule with $\tau(\rho)$ replaced by $\tau(v)$.  We call such random hypergraphs with spins  a \textit{multi-type Galton-Watson hypertree}, denoted by $(T,\rho,\tau)$ (see Figure \ref{GWTD}).
\end{itemize}   

	\begin{figure}
	\includegraphics[width=0.35 \linewidth]{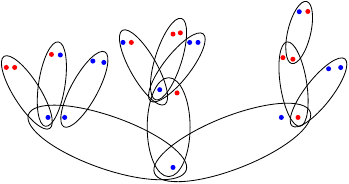}
\caption{A Galton-Watson hypertree with $d=3$. The vertices with spin $+$ are in blue and vertices with spin $-$ are in red.}\label{GWTD}
\end{figure}

Let  $W_t^{\pm}$ be the number of vertices with $\pm$ spins at the $t$-th generation and $W_t^{(r)}$ be the number of  hyperedges which contains exactly $r$ children with spin $+$  in the $t$-th generation. Let
$\mathcal G_{t-1}:=\sigma(W^{\pm}_k, 1\leq k\leq t-1)$ be the $\sigma$-algebra generated by $W_{k}^{\pm}, 1\leq k\leq t-1$.
From our definition,
$W_0^+=1,W_0^-=0$ and  $\{W_{t}^{(r)}\}_{0\leq r\leq d-1}$ are independent conditioned on $\mathcal G_{t-1}$, and the conditioned laws of $W_{t}^{(r)}$ are given by
\begin{align}
\mathcal L(W_t^{(d-1)}|\mathcal G_{t-1})&=\text{Pois}\left(\frac{a}{2^{d-1}}W_{t-1}^{+} + \frac{b}{2^{d-1}}W_{t-1}^{-}\right),\label{Pois1}\\
\mathcal L(W_t^{(0)}| \mathcal G_{t-1})&=\text{Pois}\left(\frac{a}{2^{d-1}}W_{t-1}^{-} +\frac{b}{2^{d-1}}W_{t-1}^{+}\right),\\
\mathcal L(W_t^{(r)}|\mathcal G_{t-1})&=\text{Pois}\left(\frac{b{{d-1}\choose{r}} }{2^{d-1}}(W_{t-1}^{-}+W_{t-1}^{+})\right), \quad  1\leq r\leq d-2.\label{Pois3}
\end{align}
We also have 
\begin{align}
W_t^+=\sum_{r=0}^{d-1} rW_{t}^{(r)}, \quad 
W_t^-=\sum_{r=0}^{d-1} (d-1-r)W_{t}^{(r)}.	\label{Pois5}
\end{align}

\begin{definition}\label{def:spin_preserving}
	A \textit{rooted hypergraph} is a hypergraph $H$ with a distinguished vertex $i\in V(H)$, denoted by $(H,i)$. We say two rooted hypergraphs $(H,i)$ and  $(H',i')$ are \textit{isomorphic} and  if and only if there is a bijection $\phi: V(H)\to V(H')$ such that $\phi(i)=i'$ and  $e\in E(H)$ if and only if $\phi(e):=\{\phi(j): j\in e\}\in E(H')$.

	Let $(H,i,\sigma)$ be a rooted hypergraph with root $i$ and each vertex $j$ is given a spin $\sigma(j)\in\{-1,+1\}$. Let $(H',i',\sigma')$ be a rooted hypergraph with root $i'$ where for each vertex $j\in V(H')$, a spin $\sigma'(j)\in \{-1,+1\}$ is given.  We say  $(H,i,\sigma)$ and $(H',i',\sigma')$ are \textit{spin-preserving isomorphic} and denoted by $(H,i,\sigma)\equiv (H',i',\sigma')$ if and only if there is an isomorphism $\phi: (H,i)\to (H',i')$ with $\sigma(v)=\sigma'(\phi(v))$ for each $v\in V(H)$. 
 \end{definition}

Let $(H,i,\sigma)_{t}, (T,\rho,\tau)_t$ be the rooted hypergraphs $(H,i,\sigma), (T,\rho,\tau)$  truncated at distance $t$ from $i, \rho$ respectively, and let $(T,\rho,-\tau)$ be the corresponding hypertree growth process where the root $\rho$ has spin $-1$.
We prove a local weak convergence of a typical neighborhood of a vertex in the hypergraph $H$ to the hypertree process $T$ we described above. In fact, we prove the following stronger statement. The proof of Theorem \ref{coupling2} is given in Section \ref{sec:coupling}.

\begin{theorem}\label{coupling2} Let $(H,\sigma)$ be a random hypergraph $H$ with spin $\sigma$ sampled from $\mathcal H_n$.
	Let $i\in [n]$ be fixed with spin $\sigma_i$. Let $l=c\log(n)$ with $c\log(\alpha)<1/4$, the following holds for sufficiently large $n$.
	\begin{enumerate}
	\item 	If $\sigma_i=+1$, there exists a coupling between $(H,i,\sigma)$ and $(T,\rho,\tau)$ such that $(H, i,\sigma)_l \equiv (T, \rho ,\tau)_l$ with probability at least $1-n^{-1/5}$.
	\item  If $\sigma_i=-1$, there exists a coupling between $(H,i,\sigma)$ and $(T,\rho,-\tau)$ such that $(H, i,\sigma)_l \equiv (T, \rho ,-\tau)_l$ with probability at least $1-n^{-1/5}$.
	\end{enumerate}
\end{theorem}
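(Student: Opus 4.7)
The plan is to grow the $l$-neighborhood of $i$ in $H$ breadth-first and simultaneously build the hypertree $(T,\rho,\tau)$, matching them step by step. I argue only case (1); case (2) is identical after flipping all spins. Condition on $\sigma$ throughout; by Chernoff, $n_\pm := |\{j:\sigma_j=\pm 1\}| = n/2 + O(\sqrt{n\log n})$ with probability $1-O(n^{-10})$.

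Process the BFS one vertex at a time. When it is the turn of vertex $v \in V_{t-1}(i)$ with spin $\tau(v)$, let $\mathcal{E}$ be the set of vertices discovered so far (including $v$) and let $\mathcal{P}\subset\mathcal{E}$ be those already processed. The hyperedges through $v$ that have not yet been revealed are exactly those whose remaining $d-1$ vertices lie in $[n]\setminus(\mathcal{P}\cup\{v\})$. Writing $S_\pm := \{j:\sigma_j=\pm 1\}$, I split them by type: let $N_r(v)$ count the hyperedges through $v$ whose remaining $d-1$ vertices lie in $U:=[n]\setminus\mathcal{E}$ and contain exactly $r$ of spin $\tau(v)$. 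Conditional on the history, $N_r(v)$ is Binomial with count $\binom{|U\cap S_{\tau(v)}|}{r}\binom{|U\cap S_{-\tau(v)}|}{d-1-r}$ and success probability $p_n$ if $r=d-1$ and $q_n$ otherwise. A first-moment bound gives $|V_{\le l}(i)| = O(\alpha^l\log n) = O(n^{1/4+o(1)})$ with probability $1-O(n^{-1})$; on this event, the mean of $N_r(v)$ equals the Poisson intensity of $W_t^{(r)}$ in \eqref{Pois1}--\eqref{Pois3} up to a multiplicative factor $1+O(n^{-3/4+o(1)})$, so Le Cam's inequality couples each $N_r(v)$ with the corresponding Poisson at total-variation cost $O(1/n)$. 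Inside a successfully-coupled hyperedge, both models place the $r$ vertices of spin $\tau(v)$ uniformly at random among the $d-1$ new positions, so the spin labels couple exactly.

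The coupling can fail only if (a) some hyperedge through $v$ contains a previously-discovered vertex in $\mathcal{E}\setminus(\mathcal{P}\cup\{v\})$, producing a cycle, or (b) some Binomial $N_r(v)$ disagrees with its Poisson counterpart. For (a), the probability that a single hyperedge through $v$ reaches $\mathcal{E}$ is $O(|\mathcal{E}|/n)$; combined with the $O(\alpha)$ expected hyperedges per vertex and summed over the $O(\alpha^l\log n)$ BFS steps, this contributes $O(\alpha^{2l}(\log n)^2/n) = O(n^{-1/2+o(1)})$ to the failure probability. For (b), summing the Le Cam errors over BFS steps and the $d$ types per step yields $O(d\cdot \alpha^l\log n/n) = O(n^{-3/4+o(1)})$. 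Adding the Chernoff and first-moment tails, the total failure probability is $O(n^{-1/2+o(1)})$, well below $n^{-1/5}$ for $n$ large; on the success event, the BFS of $H$ truncated at depth $l$ is spin-preservingly isomorphic to $(T,\rho,\tau)_l$ via the natural bijection sending newly discovered vertices in $H$ to newly created children in $T$.

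The main obstacle, I expect, is the uniform-in-$t$ control of the Binomial parameters: the means of the $N_r(v)$ depend on $|U\cap S_\pm|$, which drift as the BFS grows, and the first-moment bound on $|V_{\le l}(i)|$ must be strong enough to survive a union bound over all BFS steps. One must verify that the relevant set sizes remain within a $1+O(n^{-3/4+o(1)})$ window of their nominal values simultaneously across all $O(\alpha^l\log n)$ BFS steps, so that both the cumulative Le Cam error and the cycle-creation probability stay $o(n^{-1/5})$. This uniform control is precisely what the hypothesis $c\log\alpha<1/4$ buys: it forces $\alpha^{2l}=o(n)$, which leaves enough slack for the induction to close.
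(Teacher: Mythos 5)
Your proposal follows essentially the same route as the paper's proof: a breadth-first exploration of the $l$-neighborhood, identification of the per-vertex offspring counts $N_r(v)$ as conditionally binomial, a binomial-to-Poisson coupling (the paper's Lemma \ref{coupling}), control of cycle-creating collisions, and a union bound over the $O(\alpha^l\log^2 n)$ exploration steps, using $\alpha^{2l}=o(\sqrt n)$ exactly as the paper does (the paper organizes this level-by-level via Lemmas \ref{isom} and \ref{AtBt} rather than vertex-by-vertex, which is immaterial). Two points in your write-up need repair, though neither destroys the conclusion. First, your claim that the mean of $N_r(v)$ matches the tree intensity up to a factor $1+O(n^{-3/4+o(1)})$, and hence that each coupling costs $O(1/n)$, is inconsistent with your own bound $n_\pm=n/2+O(\sqrt{n\log n})$: the intensities in \eqref{Pois1}--\eqref{Pois3} carry the exact factor $2^{-(d-1)}$, so the dominant error is not the depletion by $|V_{\le l}|=O(n^{1/4+o(1)})$ but the $\Theta(\sqrt n)$ fluctuation of $n_\pm$ itself, giving a mean mismatch of order $\sqrt{\log n/n}$ and a per-coupling total-variation cost $O(n^{-1/2}\log n)$ (this is precisely Lemma \ref{coupling} with $|m-n|=O(\sqrt n\log n)$, as used in the paper). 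Summing over all steps and types then gives $O(\alpha^l n^{-1/2}\log^{O(1)}n)=O(n^{-1/4+o(1)})$, which is still $o(n^{-1/5})$, so your final bound survives with the corrected rate, just with less slack than you claimed.

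Second, your enumeration of failure events is incomplete: besides a hyperedge through $v$ hitting an already-discovered vertex, the isomorphism also fails if two hyperedges revealed at the same step (through the same $v$, or through two vertices of the same generation) share a newly discovered vertex, or if a hyperedge falls entirely inside the newly discovered set; in the tree these children sets are disjoint and no such hyperedges exist. These are exactly the paper's events $A_t$ and $B_t$ (Lemma \ref{isom}), and Lemma \ref{AtBt} shows they fail with probability $o(n^{-1/2})$ per level by the same first-moment computation you use for case (a), so the patch is routine but must be stated for the spin-preserving isomorphism claim to go through.
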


Now we construct two martingales from the Poisson hypertree growth process. Define two processes
	\begin{align*}
		M_t:&=\alpha^{-t}(W_t^+ + W_t^-),\quad 
		\Delta_t :=\beta^{-t}(W_t^+ - W_t^-).
	\end{align*}
	
\begin{lemma}\label{lem:martingale}
The two processes $\{M_t\}, \{\Delta_t\}$ are $\mathcal G_t$-martingales. If $\beta^2>\alpha>1$, $\{M_t\}$ and $\{\Delta_t\}$ are  uniformly integrable.
The martingale $\{\Delta_t\}$ converges almost surely and in $L^2$ to a unit mean random variable $\Delta_{\infty}$. Moreover, $\Delta_{\infty}$ has a finite variance  and 	\begin{align}\label{l2}
	\lim_{t\to\infty}\mathbb E|\Delta_t^2-\Delta_{\infty}^2|= 0.	
	\end{align}
\end{lemma}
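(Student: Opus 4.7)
The strategy is the standard Kesten–Stigum second–moment argument, but run on the multi-type hypertree using the conditional Poisson structure in \eqref{Pois1}–\eqref{Pois5}. First I verify the martingale property, then bound $L^2$ norms to get uniform integrability, and finally deduce the convergence statements.

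\textbf{Step 1 (martingale property).} Using $W_t^+ + W_t^- = (d-1)\sum_{r=0}^{d-1} W_t^{(r)}$ together with \eqref{Pois1}–\eqref{Pois3}, the conditional means of the Poisson variables sum to
\[
\sum_{r=0}^{d-1}\mathbb{E}[W_t^{(r)}\mid\mathcal{G}_{t-1}] \;=\; \frac{a+(2^{d-1}-1)b}{2^{d-1}}\,(W_{t-1}^++W_{t-1}^-) \;=\; \frac{\alpha}{d-1}\,(W_{t-1}^++W_{t-1}^-),
\]
so $\mathbb{E}[W_t^++W_t^-\mid\mathcal{G}_{t-1}] = \alpha\,(W_{t-1}^++W_{t-1}^-)$ and hence $\mathbb{E}[M_t\mid\mathcal{G}_{t-1}]=M_{t-1}$. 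For the signed sum, write $W_t^+-W_t^- = \sum_r (2r-d+1)W_t^{(r)}$. The coefficients $(2r-d+1)\binom{d-1}{r}$ sum to zero, so the middle types $1\le r\le d-2$ contribute nothing to the conditional mean, and only $r=0,d-1$ survive. A direct calculation then gives $\mathbb{E}[W_t^+-W_t^-\mid\mathcal{G}_{t-1}] = (d-1)\frac{a-b}{2^{d-1}}(W_{t-1}^+-W_{t-1}^-) = \beta(W_{t-1}^+-W_{t-1}^-)$, so $\mathbb{E}[\Delta_t\mid\mathcal{G}_{t-1}] = \Delta_{t-1}$.

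\textbf{Step 2 (second moments).} By Poisson, $\mathrm{Var}(W_t^{(r)}\mid\mathcal{G}_{t-1})=\mathbb{E}[W_t^{(r)}\mid\mathcal{G}_{t-1}]$, and the $W_t^{(r)}$ are conditionally independent. Therefore
\[
\mathrm{Var}(W_t^++W_t^-\mid\mathcal{G}_{t-1}) = (d-1)^2\!\!\sum_{r=0}^{d-1}\mathbb{E}[W_t^{(r)}\mid\mathcal{G}_{t-1}] = (d-1)\alpha\,(W_{t-1}^++W_{t-1}^-),
\]
so $\mathrm{Var}(M_t\mid\mathcal{G}_{t-1}) = (d-1)\alpha^{-t}M_{t-1}$ and $\mathbb{E}[M_t^2] = 1 + (d-1)\sum_{s=1}^{t}\alpha^{-s}$, which is bounded since $\alpha>1$. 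Similarly,
\[
\mathrm{Var}(W_t^+-W_t^-\mid\mathcal{G}_{t-1}) = \sum_{r=0}^{d-1}(2r-d+1)^2\,\mathbb{E}[W_t^{(r)}\mid\mathcal{G}_{t-1}] = C_d\,(W_{t-1}^++W_{t-1}^-)
\]
for an explicit constant $C_d>0$. Taking expectations and using $\mathbb{E}[W_{t-1}^++W_{t-1}^-]=\alpha^{t-1}$ yields
\[
\mathbb{E}[\Delta_t^2] \;=\; \mathbb{E}[\Delta_{t-1}^2] + C_d\,\alpha^{t-1}\beta^{-2t} \;=\; 1 + \frac{C_d}{\alpha}\sum_{s=1}^{t}\bigl(\alpha/\beta^2\bigr)^s.
\]
This is where the hypothesis $\beta^2>\alpha$ enters: the geometric series converges, giving $\sup_t\mathbb{E}[\Delta_t^2]<\infty$. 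This is the step I expect to be the most delicate point to get right, since it is precisely the condition $\beta^2>\alpha$ that separates the reconstructible regime from the non-reconstructible one.

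\textbf{Step 3 (convergence and moment conclusions).} Both martingales are bounded in $L^2$, hence uniformly integrable. By the $L^2$ martingale convergence theorem, $\Delta_t\to\Delta_\infty$ almost surely and in $L^2$, with $\mathbb{E}[\Delta_\infty]=\mathbb{E}[\Delta_0]=1$ and $\mathbb{E}[\Delta_\infty^2]\le\sup_t\mathbb{E}[\Delta_t^2]<\infty$. Finally, for \eqref{l2}, write $|\Delta_t^2-\Delta_\infty^2| = |\Delta_t-\Delta_\infty|\cdot|\Delta_t+\Delta_\infty|$ and apply Cauchy–Schwarz:
\[
\mathbb{E}|\Delta_t^2-\Delta_\infty^2| \;\le\; \bigl(\mathbb{E}(\Delta_t-\Delta_\infty)^2\bigr)^{1/2}\bigl(\mathbb{E}(\Delta_t+\Delta_\infty)^2\bigr)^{1/2}.
\]
The first factor vanishes by $L^2$ convergence, while the second is uniformly bounded by $2(\sup_t\mathbb{E}[\Delta_t^2])^{1/2}+2(\mathbb{E}[\Delta_\infty^2])^{1/2}$, completing the proof.
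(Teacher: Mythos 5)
Your proposal is correct and follows essentially the same route as the paper: verify the martingale property from the conditional Poisson means, bound conditional variances to get $L^2$ boundedness (using $\alpha>1$ for $M_t$ and $\beta^2>\alpha$ for $\Delta_t$, your constant $C_d$ being the paper's $\kappa$ up to normalization, though it depends on $a,b$ as well as $d$), and conclude by the $L^2$ martingale convergence theorem. Your explicit Cauchy--Schwarz argument for \eqref{l2} is a fine elaboration of a step the paper leaves implicit.
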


  The following Lemma will be used in the proof of Theorem \ref{main} to analyze the correlation between the estimator we construct and the correct labels of vertices based on the random variable $\Delta_{\infty}$. The proof  is similar to the proof of  Theorem 4.2 in \cite{massoulie2014community}, and we include it in Appendix \ref{sec:thresh}.
\begin{lemma}\label{thresholding} Let $l=c\log n$ with $c\log\alpha<1/8$.
	For any $\epsilon>0$,
	\begin{align}\label{inprob}
		\lim_{n\to\infty}\mathbb P_{\mathcal H_n}\left(\left|\frac{1}{n}\sum_{i=1}^n \beta^{-2l}D_l^2(i)-\mathbb E[\Delta_{\infty}^2]\right|>\epsilon\right)=0.
	\end{align}

Let $y^{(n)}\in\mathbb R^n$ be a random sequence of $l_2$-normalized vectors defined by
\begin{align*}
	y_i^{(n)}:=\frac{D_l(i)}{\sqrt{\sum_{j=1}^n D_l(j)^2}}, 1\leq i\leq n.
\end{align*}
Let $x^{(n)}$ be any sequence of random vectors in $\mathbb R^n$ such that for any $\epsilon>0$,
$$\lim_{n\to\infty}\mathbb P_{\mathcal H_n}(\|x^{(n)}-y^{(n)}\|_2>\epsilon)=0.
$$
For all $\tau\in\mathbb R$ that is a point of continuity of the distribution of both $\Delta_{\infty}$ and $-\Delta_{\infty}$, for any $\epsilon>0$, one has the following 
\begin{align}\label{indicator}
&\lim_{n\to\infty}\mathbb P_{\mathcal H_n}\left(\left|\frac{1}{n}\sum_{i\in[n]:\sigma_i=+}\mathbf{1}\left\{x_i^{(n)}\geq\tau/\sqrt{n\mathbb E[\Delta_{\infty}^2]}\right\}-\frac{1}{2}\mathbb P(\Delta_{\infty}\geq \tau)\right|>\epsilon\right)=0,\\
&\lim_{n\to\infty}\mathbb P_{\mathcal H_n}\left(\left|\frac{1}{n}\sum_{i\in[n]:\sigma_i=-}\mathbf{1}\left\{x_i^{(n)}\geq\tau/\sqrt{n\mathbb E[\Delta_{\infty}^2]}\right\}-\frac{1}{2}\mathbb P(-\Delta_{\infty}\geq \tau)\right|>\epsilon\right)=0.\notag 
	\end{align}
\end{lemma}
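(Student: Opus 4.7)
The plan is to couple the local structure of $H$ around each vertex $i$ to the Galton--Watson hypertree $(T,\rho,\sigma_i\tau)$ via Theorem~\ref{coupling2}, on which the signed neighbor count $D_l$ becomes $\sigma_i\beta^l\Delta_l$. The $L^2$ convergence of $\Delta_l\to\Delta_\infty$ in Lemma~\ref{lem:martingale} then delivers the one-point statement, and the approximate independence of distant neighborhoods in Lemma~\ref{approxind} upgrades this to a second-moment concentration over the $n$ vertices.

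For part (1), fix $i\in[n]$. On the event of successful coupling (probability $\geq 1-n^{-1/5}$), $|V_l^{\pm}(i)|=W_l^{\pm\sigma_i}$, so $\beta^{-l}D_l(i)=\sigma_i\Delta_l$ and thus $\beta^{-2l}D_l(i)^2=\Delta_l^2$. On the failure event, Theorem~\ref{quasi} gives the deterministic bound $|D_l(i)|\leq C(\log n)\alpha^l$, and since $\beta^2>\alpha$ one has $\beta^{-2l}\alpha^{2l}\leq 1$; the failure probability $n^{-1/5}$ thus contributes only $o(1)$. Combined with $\mathbb E[\Delta_l^2]\to \mathbb E[\Delta_\infty^2]$ from Lemma~\ref{lem:martingale}, we get $\mathbb E_{\mathcal H_n}[\beta^{-2l}D_l(i)^2]=\mathbb E[\Delta_\infty^2]+o(1)$. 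For the variance, each summand is uniformly bounded by $C^2(\log n)^2$, so for $i\neq j$
\[
\bigl|\mathrm{Cov}\bigl(\beta^{-2l}D_l(i)^2,\,\beta^{-2l}D_l(j)^2\bigr)\bigr|\leq C^4(\log n)^4\cdot d_{\mathrm{TV}}=O(n^{-\gamma}(\log n)^4),
\]
by Lemma~\ref{approxind}. Summing over pairs and applying Chebyshev yields~\eqref{inprob}.

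For part (2), \eqref{inprob} gives $\sqrt{\sum_j D_l(j)^2}=\beta^l\sqrt{n\mathbb E[\Delta_\infty^2]}\,(1+o_p(1))$, so the threshold condition $y_i^{(n)}\geq\tau/\sqrt{n\mathbb E[\Delta_\infty^2]}$ becomes $\beta^{-l}D_l(i)\geq\tau(1+o_p(1))$, i.e.\ $\sigma_i\Delta_l\geq\tau(1+o_p(1))$ under the coupling. Applying the same second-moment argument with bounded indicator summands (using Lemma~\ref{approxind} again, now trivially bounded by $1$),
\[
\frac{1}{n}\sum_{i:\sigma_i=+}\mathbf{1}\bigl\{y_i^{(n)}\geq\tau/\sqrt{n\mathbb E[\Delta_\infty^2]}\bigr\}\;\xrightarrow{\;\mathbb P\;}\;\tfrac{1}{2}\mathbb P(\Delta_l\geq\tau)\;\to\;\tfrac{1}{2}\mathbb P(\Delta_\infty\geq\tau)
\]
at continuity points of the law of $\Delta_\infty$ (and analogously for $\sigma_i=-1$ using $-\Delta_\infty$). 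Finally, to replace $y^{(n)}$ by $x^{(n)}$, use the sandwich
\[
\mathbf{1}\{y_i^{(n)}\geq \tau_+\}-\mathbf{1}\{|x_i^{(n)}-y_i^{(n)}|\geq\delta_*\}\leq\mathbf{1}\{x_i^{(n)}\geq\tau_0\}\leq\mathbf{1}\{y_i^{(n)}\geq\tau_-\}+\mathbf{1}\{|x_i^{(n)}-y_i^{(n)}|\geq\delta_*\},
\]
with $\tau_\pm=(\tau\pm\delta)/\sqrt{n\mathbb E[\Delta_\infty^2]}$ and $\delta_*=\delta/\sqrt{n\mathbb E[\Delta_\infty^2]}$. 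Markov's inequality gives $|\{i:|x_i^{(n)}-y_i^{(n)}|\geq\delta_*\}|\leq\|x^{(n)}-y^{(n)}\|_2^2/\delta_*^2$, which after dividing by $n$ is $o_p(1)$ as $\epsilon\to 0$; sending $\delta\to 0$ and invoking continuity of the law of $\pm\Delta_\infty$ at $\tau$ completes~\eqref{indicator}.

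The hard part is keeping the error accounting consistent: each stage (coupling failure, the TV gap in Lemma~\ref{approxind}, the $L^2$-gap between $\Delta_l$ and $\Delta_\infty$, and the $l_2$-perturbation $x^{(n)}-y^{(n)}$) contributes a factor that must be dominated by the growth bounds from Theorem~\ref{quasi}. The choice of exponent $c\log\alpha<1/8$, together with $\beta^2>\alpha$, is exactly what makes every such $\alpha^l$-factor negligible against the polynomial small-probability factors.
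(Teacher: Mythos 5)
Your argument for \eqref{inprob} is essentially the paper's: couple each vertex's $l$-neighborhood with the hypertree (Theorem \ref{coupling2}) so that $\beta^{-l}\sigma_i D_l(i)=\Delta_l$ on the good event, use the $L^2$ martingale convergence of Lemma \ref{lem:martingale} for the mean, bound the failure event by the growth estimates of Theorem \ref{quasi}, and control the variance through the total-variation bound of Lemma \ref{approxind} followed by Chebyshev; the paper does exactly this (conditioning on the growth event $\Omega$ to make the summands bounded, which is also what your ``uniformly bounded by $C^2\log^2 n$'' step implicitly requires). For \eqref{indicator} you take a slightly different but equally valid route: you sandwich the indicator between shifted thresholds $\tau\pm\delta$ and count the coordinates where $|x_i^{(n)}-y_i^{(n)}|\ge\delta/\sqrt{n\mathbb E[\Delta_\infty^2]}$ via Markov applied to $\|x^{(n)}-y^{(n)}\|_2^2$, whereas the paper sandwiches $\mathbf{1}\{x\ge\tau\}$ between bounded Lipschitz test functions $f\le\mathbf{1}\le g$ and transfers from $y^{(n)}$ to $x^{(n)}$ through the Lipschitz constant. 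Both devices reduce to the continuity of the law of $\pm\Delta_\infty$ at $\tau$ and to \eqref{inprob}; your version is marginally more elementary, the paper's avoids the explicit coordinate-counting step, and the normalizer replacement $\sqrt{\sum_j D_l(j)^2}=\beta^l\sqrt{n\mathbb E[\Delta_\infty^2]}(1+o_p(1))$ must in either case be absorbed into the same $\delta$-sandwich, which you gloss over but which folds in without difficulty.

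One intermediate claim in your treatment of the coupling-failure event is false as stated: from $\alpha>\beta$ one has $\beta^{-2l}\alpha^{2l}=(\alpha/\beta)^{2l}>1$, so ``$\beta^{2}>\alpha$ implies $\beta^{-2l}\alpha^{2l}\le 1$'' is wrong. The conclusion survives for either of two reasons: Theorem \ref{quasi}, estimate \eqref{claim2}, gives directly $|D_l(i)|\le C\beta^l\log n$ (not merely the $\alpha^l$ bound from \eqref{claim1}), so the summand is $O(\log^2 n)$ on the failure event and the contribution is $O(n^{-1/5}\log^2 n)=o(1)$, which is the paper's argument; alternatively, $\beta^2>\alpha$ yields $(\alpha/\beta)^{2}\le\alpha$, hence $(\alpha/\beta)^{2l}\le\alpha^{l}\le n^{c\log\alpha}<n^{1/8}$, which is still dominated by the failure probability $n^{-1/5}$. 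Please replace the incorrect inequality by one of these; with that correction the proof is sound.
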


\section{Proof of the main result}\label{sec:maintheorem}

Let $\vec{S}_l:=(S_l(1),\dots, S_l(n))$ and  $\vec{D}_l:=(D_l(1),\dots, D_l(n))$. We say the the sequence of  vectors $\{v_n\}_{\geq 1}$ is \textit{asymptotically aligned} with the sequence of  vectors $\{w_n\}_{n\geq 1}$ if 
	\[ \lim_{n\to\infty} \frac{|\langle v_n, w_n\rangle|}{\|v_n\|_2\cdot \|w_n\|_2}=1.
	\]
	
With all the ingredients in Sections \ref{sec:matrixexpan}-\ref{sec:coupling},  we establish the following weak Ramanujan property of $B^{(l)}$. The proof of Theorem \ref{rama3} is given in Section \ref{sec:rama3}.
\begin{theorem}\label{rama3}
	For $l=c\log(n)$ with $c\log(\alpha)<1/8$, asymptotically almost surely the two leading eigenvectors of $B^{(l)}$ are asymptotically aligned with vectors $\vec{S_l}, \vec{D}_l$, where the first  eigenvalue is  of order  $\Theta(\alpha^l)$ up to some logarithmic factor and the second eigenvalue is of order $\Omega(\beta^l)$. All other eigenvalues are of order $O(n^{\epsilon }\alpha^{l/2})$ for any $\epsilon>0$.
\end{theorem}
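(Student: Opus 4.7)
The plan is to combine the operator-norm bound of Theorem~\ref{thm:endsection} with explicit Rayleigh-quotient computations to pin down the top two eigenvalues of $B^{(l)}$, and then to use a Davis-Kahan-style angle argument to identify the corresponding eigenvectors with $\vec{S}_l$ and $\vec{D}_l$. The bulk bound is essentially immediate: by the Courant-Fischer min-max principle, for any $2$-dimensional subspace $W\subset\mathbb R^n$ and any symmetric $M$ one has $|\lambda_i(M)|\le \sup_{x\perp W,\ \|x\|_2=1}\|Mx\|_2$ for $i\ge 3$. Taking $W=\mathrm{span}(B^{(l)}\mathbf 1, B^{(l)}\sigma)$, Theorem~\ref{thm:endsection} gives $|\lambda_i(B^{(l)})|=O(n^{\epsilon}\alpha^{l/2})$ for $i\ge 3$ asymptotically almost surely. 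A matching upper bound on $\lambda_1(B^{(l)})$ comes from $\rho(B^{(l)})\le \max_i \sum_j B^{(l)}_{ij}=\max_i(B^{(l)}\mathbf 1)_i$ for this non-negative matrix, together with Lemma~\ref{TangleCount} and Theorem~\ref{quasi} bounding $(B^{(l)}\mathbf 1)_i$ by $O(\alpha^l\log n)$.

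For the lower bounds I compute the restriction of $B^{(l)}$ to $\mathrm{span}(\mathbf 1,\sigma)$. For non-tangled $i\notin\mathcal B$, one has $(B^{(l)}\mathbf 1)_i=S_l(i)$ (since self-avoiding walks of length $l$ from $i$ are in bijection with distance-$l$ vertices once the neighborhood is a hypertree) and $(B^{(l)}\sigma)_i=D_l(i)$. By Lemma~\ref{tangle} one has $|\mathcal B|\le n^{1/4+o(1)}$, and Lemma~\ref{TangleCount} caps entries on $\mathcal B$ at $O(\alpha^l\log n)$, so the tangled set contributes negligibly to all sums and $\ell_2$-norms below. Summing and using Theorem~\ref{quasi} together with the $L^1$ convergence of $M_t$ from Lemma~\ref{lem:martingale} yields $\mathbf 1^\top B^{(l)}\mathbf 1=\sum_i S_l(i)(1+o(1))=\Theta(n\alpha^l)$, hence $\lambda_1(B^{(l)})=\Theta(\alpha^l)$ up to logarithmic factors. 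An analogous computation gives $\sigma^\top B^{(l)}\sigma=\sum_i\sigma_i D_l(i)(1+o(1))$; the Galton-Watson coupling of Theorem~\ref{coupling2} together with the $L^2$ convergence $\Delta_t\to\Delta_\infty$ of Lemma~\ref{lem:martingale} makes $\beta^{-l}\sigma_i D_l(i)$ approximately i.i.d.\ with mean $1$, and the averaging step is exactly Lemma~\ref{thresholding}, yielding $\sigma^\top B^{(l)}\sigma=\Omega(n\beta^l)$. The off-diagonal term $\mathbf 1^\top B^{(l)}\sigma=\sum_i\sigma_i S_l(i)(1+o(1))$ is controlled by a second-moment estimate relying on the approximate independence of distinct neighborhoods (Lemma~\ref{approxind}), giving $|\mathbf 1^\top B^{(l)}\sigma|=O(\sqrt n\,\alpha^l)=o(n\beta^l)$ precisely under $\beta^2>\alpha$.

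With these three estimates, the restriction of $B^{(l)}$ to the nearly orthonormal basis $(\mathbf 1/\sqrt n,\sigma/\sqrt n)$ is a $2\times 2$ matrix whose eigenvalues are $\alpha^l(1+o(1))$ and $\beta^l(1+o(1))$, so min-max gives $\lambda_2(B^{(l)})=\Omega(\beta^l)$. Under $\beta^2>\alpha$ and $l=c\log n$ with $c\log\alpha<1/8$, one has $\beta^l/(n^\epsilon\alpha^{l/2})\to\infty$ polynomially for sufficiently small $\epsilon$, so both top eigenvalues are separated from the bulk by a polynomial factor in $n$. A Davis-Kahan $\sin\Theta$ bound then forces the top-two eigenspace of $B^{(l)}$ to agree asymptotically with $\mathrm{span}(B^{(l)}\mathbf 1,B^{(l)}\sigma)$, and since $\alpha^l\gg\beta^l$ the leading eigenvector aligns with $B^{(l)}\mathbf 1$ and the second with $B^{(l)}\sigma$; on $[n]\setminus\mathcal B$ these coincide with $\vec{S}_l$ and $\vec{D}_l$ and the exceptional set is $\ell_2$-negligible, giving the required alignment. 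The hardest step will be the off-diagonal bound on $\mathbf 1^\top B^{(l)}\sigma$ together with the martingale-limit evaluation of $\sigma^\top B^{(l)}\sigma$: both require combining the hypertree coupling of Theorem~\ref{coupling2} with the $L^2$ convergence of $\Delta_t$ so that squared and cross quantities can be averaged over $i$, while simultaneously discarding tangled contributions, and this is the only place in the argument where the sharp threshold $\beta^2>\alpha$ enters.
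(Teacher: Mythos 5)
Your eigenvalue estimates are fine and broadly parallel to the paper (bulk via Theorem \ref{thm:endsection} and Courant--Fischer, $\lambda_1\le$ max row sum, lower bounds via Rayleigh quotients — you test with $\mathbf 1,\sigma$ where the paper tests with $B^{(l)}\mathbf 1,B^{(l)}\sigma$), but the eigenvector identification has a genuine gap. Knowing that the top-two eigenspace of $B^{(l)}$ asymptotically coincides with $W=\mathrm{span}(B^{(l)}\mathbf 1,B^{(l)}\sigma)$ and that $\lambda_1\gg\lambda_2$ does \emph{not} tell you which direction inside $W$ is $v_1$ and which is $v_2$: the two-dimensional space could be "rotated" so that $B^{(l)}\sigma$ has a macroscopic component along $v_1$. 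Your phrase "since $\alpha^l\gg\beta^l$ the leading eigenvector aligns with $B^{(l)}\mathbf 1$ and the second with $B^{(l)}\sigma$" is exactly the step that needs an extra input, and none of your stated estimates supplies it. Quantities computed on $\mathrm{span}(\mathbf 1,\sigma)$ (namely $\mathbf 1^\top B^{(l)}\mathbf 1$, $\sigma^\top B^{(l)}\sigma$, $\mathbf 1^\top B^{(l)}\sigma$) only give eigenvalue lower bounds; they cannot rule out the rotation, because the bound $\lambda_1\langle\sigma,v_1\rangle^2\lesssim n\beta^l$ that they yield gets amplified by $\lambda_1\approx\alpha^l$ when you pass from $\sigma$ to $B^{(l)}\sigma$, and $\sqrt{\log n}\,(\alpha/\beta)^{l/2}\to\infty$, so the $v_1$-component of $B^{(l)}\sigma$ is not forced to be $o(\|B^{(l)}\sigma\|_2)$ by your ingredients.

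The paper closes exactly this hole with the upper bound in Lemma \ref{rama2}, $\|B^{(l)}B^{(l)}\sigma\|_2\le O(n^{-\gamma}\alpha^l)\|B^{(l)}\sigma\|_2$, which says the $\sigma$-direction is \emph{not} amplified at rate $\alpha^l$ and hence $B^{(l)}\sigma$ is nearly orthogonal to $v_1$; combined with the near-orthogonality $\langle B^{(l)}\mathbf 1,B^{(l)}\sigma\rangle=o(\|B^{(l)}\mathbf 1\|_2\|B^{(l)}\sigma\|_2)$ (Lemma \ref{rama1}) this pins $v_2$ to the $B^{(l)}\sigma\approx\vec D_l$ direction, which is the vector the detection algorithm actually uses. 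Proving that bound is the hardest part of this section (Appendix \ref{A12}): it needs the depth-$2l$ hypertree coupling (this is where $c\log\alpha<1/8$ rather than $1/4$ enters), the correlation decay $\mathbb E[\sigma_j\sigma_{j'}\mid\mathcal C_i]\le(\beta/\alpha)^{d(j,j')-1}$, and neighborhood-size counts — none of which appears in your plan, whose "hardest step" ($\mathbf 1^\top B^{(l)}\sigma$ and $\sigma^\top B^{(l)}\sigma$) only feeds the eigenvalue bounds. A secondary, more minor point: your claimed $|\mathbf 1^\top B^{(l)}\sigma|=O(\sqrt n\,\alpha^l)$ is stronger than what Lemma \ref{approxind}'s $O(n^{-\gamma})$ coupling error supports (a second-moment argument gives roughly $O(n^{1-\gamma/2}\alpha^l\log n)$), though this weaker bound still suffices for your $2\times2$ Rayleigh computation.
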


Theorem \ref{rama3} connects the  leading eigenvectors  of $B^{(l)}$ with the local structures of the random hypergraph $H$ and shows that the bulk eigenvalues of $B^{(l)}$ are separated from the two top eigenvalues.
Equipped with  Theorem \ref{rama3} and Lemma \ref{thresholding}, we are ready to prove our main result. 
\begin{proof}[Proof of Theorem \ref{main}]
	Let $x^{(n)}$ be the $l_2$-normalized second eigenvector of $B^{(l)}$, by Theorem \ref{rama3}, $x^{(n)}$ is asymptotically aligned with the $l_2$-normalized vector
$$y_i^{(n)}=\frac{D_l(i)}{\sqrt{\sum_{j=1}^n D_l(j)^2}}, 1\leq i\leq n$$ asymptotically almost surely.  So we have  $\|x^{(n)}-y^{(n)}\|_2\to 0$ or $\|x^{(n)}+y^{(n)}\|_2\to 0$ asymptotically almost surely. We first assume $\|x^{(n)}-y^{(n)}\|_2\to 0$. Since $\mathbb E\Delta_{\infty}=1$, from the proof of Theorem 2.1 in \cite{massoulie2014community}, there exists a point $\tau\in\mathbb R$, in the set  of  continuity points of both $\Delta_{\infty}$ and $-\Delta_{\infty}$, that satisfies  
$r:=\mathbb P(\Delta_{\infty}\geq \tau)-\mathbb P(-\Delta_{\infty}\geq \tau)>0.
$  Take $t=\tau/\sqrt{\mathbb E(\Delta_{\infty}^2)}$ and let $\mathcal N^+,\mathcal N^-$ be the set of vertices with spin $+$ and $-$, respectively.  From the definition of $\hat{\sigma}$, we have 
\begin{align}
\frac{1}{n}\sum_{i\in[n]}\sigma_i\hat{\sigma}_i
=&\frac{1}{n}\sum_{i\in[n]}\sigma_i\left(\mathbf{1}_{\left\{x_i^{(n)}\geq t/\sqrt{n}\right\}}-\mathbf{1}_{\left\{x_i^{(n)}< t/\sqrt{n}\right\}}\right) \label{final}\\
=&-\frac{1}{n}\sum_{i\in[n]}\sigma_i+\frac{2}{n}
\sum_{i\in\mathcal N^+}\mathbf{1}_{\left\{x_i^{(n)}\geq \tau/\sqrt{n\mathbb E\Delta_{\infty}^2}\right\}}-\frac{2}{n}\sum_{i\in\mathcal N^-}\mathbf{1}_{\left\{x_i^{(n)}\geq \tau/\sqrt{n\mathbb E\Delta_{\infty}^2}\right\}}.\notag 
\end{align}

Note that $\frac{1}{n}\sum_{i\in [n]}\sigma_i\to 0$ in probability by the law of large numbers. From  \eqref{indicator} in  Lemma \ref{thresholding}, we have \eqref{final} converges in probability to $\mathbb P(\Delta_{\infty}\geq \tau)-\mathbb P(-\Delta_{\infty}\geq \tau)=r$.
If $\|x^{(n)}+y^{(n)}\|_2\to 0$, similarly we have   
$\frac{1}{n}\sum_{i\in[n]}\sigma_i\hat{\sigma}_i $ converges to $-r$  in probability.
From these two cases, for any $\epsilon>0$,
 \begin{align*}
 	\lim_{n\to\infty}\mathbb P_{\mathcal H_n}\left(\{|ov_n(\hat{\sigma},\sigma)-r|>\epsilon\}\bigcap \{|ov_n(\hat{\sigma},\sigma)+r|>\epsilon\} \right)=0.
 \end{align*}
This concludes the proof of Theorem \ref{main}.
\end{proof}

\section{Proof of Theorem \ref{matrixexpansionthm}}\label{sec:matrixmomentmethod}

\subsection{Proof of \eqref{matrixexpansion} in Theorem \ref{matrixexpansionthm}}

 For ease of notation, we drop the index $n$ from $l_n$ in the proof, and it will be clear from the law $\mathcal H_n$. 
For any sequences of real numbers $\{a_t\}_{t=1}^l,\{b_t\}_{t=1}^l$, we have the following expansion identity for $l\geq 2$ (see for example, Equation (15) in \cite{massoulie2014community} and Equation (27) in \cite{bordenave2018nonbacktracking}):
\begin{align*}
\prod_{t=1}^l(a_t-b_t)=\prod_{t=1}^l a_t -\sum_{m=1}^{l} \left(\prod_{t=1}^{l-m}(a_t-b_t) \right)b_{l-m+1}\prod_{t=l-m+2}^l a_t.	\end{align*}
 Therefore  the following identity holds.
\begin{align*}
 \prod_{t=1}^l (A_{i_{t-1}i_t}^{e_{i_t}}-\overline{A}_{i_{t-1}i_t}^{e_{i_t}})
=&\prod_{t=1}^l A_{i_{t-1}i_{t}}^{e_{i_t}}-\sum_{m=1}^l\left(\prod_{t=1}^{l-m} (A_{i_{t-1}i_t}^{e_{i_t}}-\overline{A}_{i_{t-1}i_t}^{e_{i_t}})\right)\overline{A}_{i_{l-m}i_{l-m+1}}^{e_{i_{l-m+1}}}\prod_{t=l-m+2}^lA_{i_{t-1}i_t}^{e_{i_t}}.
\end{align*}

Summing over all $w\in \text{SAW}_{ij}$,   $\Delta_{ij}^{(l)}$ can be written as 
\begin{align}\label{expand}
B_{ij}^{(l)}-\sum_{m=1}^l\sum_{w\in \text{SAW}_{ij}}	\left(\prod_{t=1}^{l-m} (A_{i_{t-1}i_t}^{e_{i_t}}-\overline{A}_{i_{t-1}i_t}^{e_{i_t}})\right)\overline{A}_{i_{l-m}i_{l-m+1}}^{e_{i_{l-m+1}}}\prod_{t=l-m+2}^lA_{i_{t-1}i_t}^{e_{i_t}}.
\end{align}

Introduce the set $Q_{ij}^m$ of walks $w$ defined by concatenations of two self-avoiding walks $w_1, w_2$ such that $w_1$ is a self-avoiding walk of length $l-m$ from $i$ to some vertex $k$, and $w_2$ is a self-avoiding walk of length $m$ from $k$ to $j$ for all possible $1\leq m\leq l$ and $k\in [n]$. Then $\text{SAW}_{ij}\subset Q_{ij}^m$ for all $1\leq m\leq l$. Let $R_{ij}^m= Q_{ij}^m\setminus \text{SAW}_{ij}$. Define the matrix $\Gamma^{(l,m)}$ as 
\begin{align}\label{Gammaij}
\Gamma_{ij}^{(l,m)}:=	\sum_{w\in R_{ij}^m}	\prod_{t=1}^{l-m} (A_{i_{t-1}i_t}^{e_{i_t}}-\overline{A}_{i_{t-1}i_t}^{e_{i_t}})\overline{A}_{i_{l-m}i_{l-m+1}}^{e_{t_{l-m+1}}}\prod_{t=l-m+2}^lA_{i_{t-1}i_t}^{e_{i_t}}.
\end{align}
 From \eqref{expand}, $\Delta_{ij}^{(l)}$ can be expanded as 
 \begin{align*}
 &B_{ij}^{(l)}-\sum_{m=1}^l\sum_{w\in  Q_{ij}^m \setminus R_{ij}^m}	\left(\prod_{t=1}^{l-m} (A_{i_{t-1}i_t}^{e_{i_t}}-\overline{A}_{i_{t-1}i_t}^{e_{i_t}})\right)\overline{A}_{i_{l-m}i_{l-m+1}}^{e_{i_{l-m+1}}}\prod_{t=l-m+2}^lA_{i_{t-1}i_t}^{e_{i_t}}.
  \end{align*}
  It can be further written as 
$$ B_{ij}^{(l)}- \sum_{m=1}^l\sum_{w\in Q_{ij}^m}	\prod_{t=1}^{l-m} (A_{i_{t-1}i_t}^{e_{i_t}}-\overline{A}_{i_{t-1}i_t}^{e_{i_t}})\overline{A}_{i_{l-m}i_{l-m+1}}^{e_{i_{l-m+1}}}\prod_{t=l-m+2}^lA_{i_{t-1}i_t}^{e_{i_t}} +\sum_{m=1}^l \Gamma_{ij}^{(l,m)}.$$

From the definition of matrix multiplication, we have
\begin{align}
 &\sum_{w\in Q_{ij}^m}	\prod_{t=1}^{l-m} (A_{i_{t-1}i_t}^{e_{i_t}}-\overline{A}_{i_{t-1}i_t}^{e_{i_t}})\overline{A}_{i_{l-m}i_{l-m+1}}^{e_{i_{l-m+1}}}\prod_{t=l-m+2}^lA_{i_{t-1}i_t}^{e_{i_t}} \notag\\
=&\sum_{1\leq u,v\leq n}\Delta_{iu}^{(l-m)}\overline{A}_{uv} B^{(m-1)}_{vj}
=\left(\Delta^{(l-m)}\overline{A}B^{(m-1)}\right)_{ij}.\label{matrixij}
\end{align}

Combining the expansion of $\Delta_{ij}^{(l)}$ above and \eqref{matrixij}, we obtain 
\begin{align}
	\Delta_{ij}^{(l)}=&B_{ij}^{(l)}-\sum_{m=1}^l(\Delta^{(l-m)}\overline A B^{(m-1)})_{ij}+\sum_{m=1}^l \Gamma_{ij}^{(l,m)}\label{entrywise}.
\end{align}
Since \eqref{entrywise} is true for any $i,j\in [n]$, it implies \eqref{matrixexpansion}.

\subsection{Proof of  \eqref{specrho} in Theorem \ref{matrixexpansionthm}}

We first prove the following spectral norm bound on $\Delta^{(l)}$.

\begin{lemma}\label{expectboundmoment}
	For $l=O(\log n)$ and fixed $k$, we have
	\begin{align}
	\mathbb E_{\mathcal H_n}[\rho(\Delta^{(l)})^{2k}]=O( n\alpha^{kl}\log ^{6k}n).\label{rhodelta}
	\end{align}
\end{lemma}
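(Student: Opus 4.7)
Since $\Delta^{(l)}$ is symmetric, the classical trace bound $\rho(\Delta^{(l)})^{2k}\le \textnormal{tr}\bigl((\Delta^{(l)})^{2k}\bigr)$ applies. Expanding the trace using the definition \eqref{Delta_ijl} gives a sum over closed concatenations of $2k$ self-avoiding walks of length $l$ in the complete $d$-uniform hypergraph on $[n]$. Writing the concatenated circuit as $(u_0,f_1,u_1,\ldots,f_{2kl},u_{2kl}=u_0)$, each summand contributes
\begin{align*}
\mathbb E_{\mathcal H_n}\Big[\prod_{t=1}^{2kl}\bigl(A_{u_{t-1}u_t}^{f_t}-\overline A_{u_{t-1}u_t}^{f_t}\bigr)\Big].
\end{align*}

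Conditionally on $\sigma$, the indicators $\{\mathbf 1_{e\in E}\}_{e\in{[n]\choose d}}$ are independent Bernoulli, so this expectation factorizes over distinct hyperedges, and any hyperedge appearing only once kills the product by centering. Consequently only \emph{redundant} concatenations---those in which every distinct hyperedge is traversed at least twice---contribute. For such a concatenation with $h$ distinct hyperedges, an elementary moment bound on $(\mathrm{Bernoulli}(p)-p)^m$ together with $p_n,q_n=O\bigl(\alpha/n^{d-1}\bigr)$ yields $\bigl|\mathbb E\prod_t(\cdot)\bigr|\le (C\alpha/n^{d-1})^h$.

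Let $v$ and $h$ denote the number of distinct vertices and distinct hyperedges in such a redundant concatenation. Redundancy forces $h\le kl$, since the $2kl$ edge-slots are each occupied by a hyperedge used at least twice. The concatenation traces a connected sub-hypergraph, and adding a hyperedge to a tree-like traversal introduces at most $d-1$ new vertices, so $v\le (d-1)h+1$. Summing over shapes (isomorphism classes),
\begin{align*}
\mathbb E[\textnormal{tr}((\Delta^{(l)})^{2k})]\le \sum_{v,h}N(v,h)\,n^v\,\bigl(C\alpha/n^{d-1}\bigr)^h=\sum_{v,h}N(v,h)\,n^{v-(d-1)h}\,(C\alpha)^h,
\end{align*}
where $N(v,h)$ counts shapes with parameters $v,h$. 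The extremal regime $v=(d-1)h+1$, $h=kl$ produces exactly the target order $n\,\alpha^{kl}$, up to the logarithmic factor discussed below.

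The main obstacle is the shape count $N(v,h)$. Each of the $2k$ SAWs is individually self-avoiding, but at the $2k$ joining points consecutive SAWs may revisit vertices from earlier segments, creating ``branching'' that is genuinely harder to track than in the graph case, because each step records both an unordered $d$-set $f_t$ and which pair $\{u_{t-1},u_t\}\subset f_t$ is actually traversed. Following the encoding strategy of \cite{massoulie2014community,bordenave2018nonbacktracking}, adapted to hypergraphs, I would classify each of the $2kl$ steps as a tree step, a closing step (first revisit of a vertex or hyperedge), or a repetition step, and encode the positions and the combinatorial choices at the closing and branching events as a word of length $O(k)$ over an alphabet of size $O(l)$. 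Since $l=O(\log n)$, this should yield $N(v,h)\le (Cl)^{6k}=O(\log^{6k}n)$ uniformly in $v,h$, producing the stated $\log^{6k}n$ factor in the bound.
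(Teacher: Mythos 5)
Your overall architecture (trace bound, expansion over circuits that are concatenations of $2k$ self-avoiding walks, centering killing hyperedges traversed once, then counting shapes with $v$ vertices and $h$ hyperedges) matches the paper's. But there is a genuine gap in your weight estimate. You bound the expected contribution of a redundant circuit by $(C\alpha/n^{d-1})^h$ using the worst case $p_n\vee q_n=O(\alpha/n^{d-1})$ over the spins. The constant $C$ here is genuinely larger than $1$ whenever $a>b$ (for instance, with $d=3$ and $b$ small, the ratio of $a/\binom{n}{d-1}$ to the average value $\alpha/((d-1)\binom{n}{d-1})$ is about $2^{d-1}$), and in the extremal regime $h=kl=\Theta(\log n)$ this costs a factor $C^{kl}=n^{\Theta(1)}$. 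So your plan proves only $\mathbb E[\rho(\Delta^{(l)})^{2k}]=O\bigl(n\,(C\alpha)^{kl}\log^{6k}n\bigr)$, which is polynomially weaker than the stated $O(n\,\alpha^{kl}\log^{6k}n)$, and this loss is fatal downstream: it would give $\rho(\Delta^{(l)})\lesssim n^{\epsilon}(C\alpha)^{l/2}$ rather than $n^{\epsilon}\alpha^{l/2}$, which no longer separates the bulk from the second eigenvalue $\beta^l$ when $\beta^2$ is close to $\alpha$ — exactly the regime the theorem is about.

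The missing idea is to keep the expectation over $\sigma$ and exploit the tree structure rather than taking a worst case over spins. In the paper, the weight conditioned on $\sigma$ factorizes over distinct hyperedges into factors at most $p_{\sigma(e)}$, and for the $v-1$ hyperedges corresponding to the spanning tree $T(w)$ of the associated multigraph $G(w)$, any two of them share at most one vertex; peeling leaves of the tree then gives $\mathbb E_{\sigma}\bigl[\prod_i p_{\sigma(e_i)}\bigr]=\prod_i\mathbb E_{\sigma}[p_{\sigma(e_i)}]=\bigl(\alpha/((d-1)\binom{n}{d-1})\bigr)^{v-1}$ with the exact constant $\alpha/(d-1)$, while the crude $a\vee b$ bound is used only for the $h-v+1$ excess hyperedges, each of which is accompanied by an extra factor $1/n$ in the vertex/hyperedge counting, so those constants are harmless. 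A related, secondary issue: your claim that the shape count $N(v,h)$ is $O(\log^{6k}n)$ uniformly in $v,h$ is not right — the encoding cost grows like $[(v+1)^2(l+1)]^{2k(2+h-v)}$ with the excess $h-v+1$ (and the paper needs separate Cases (2)–(3) for circuits whose hyperedges repeat edge labels of $G(w)$ or whose off-walk vertices collide) — but this is repaired the same way, by pairing each unit of excess or degeneracy with a factor $n^{-1}$ and summing the resulting geometric series, with the $\log^{6k}n$ coming only from the dominant $h=v-1$ term.
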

 
\begin{proof} 
	Note that
	$\mathbb E_{\mathcal H_n}[\rho(\Delta^{(l)})^{2k}]\leq \mathbb E_{\mathcal H_n}[\textnormal{tr}(\Delta^{(l)})^{2k}]$. The estimation is based on a coding argument, and we modify the proof in \cite{massoulie2014community} to count circuits in hypergraphs.  
	Let $W_{2k,l}$ be the set of all circuits of length $2kl$ in the complete hypergraph $K_{n,d}$ which are concatenations of $2k$ many self-avoiding walks of length $l$. For any circuits $w\in W_{2k,l}$, we  denote it by  $w=(i_0,e_{i_1},i_1,\dots e_{i_{2kl}},i_{2kl})$, with $i_{2kl}=i_0$. From \eqref{Delta_ijl}, we have 
	\begin{align}\label{deltasum}
		\mathbb E_{\mathcal H_n}\left[\textnormal{tr}(\Delta^{(l)})^{2k}\right]&=\sum_{j_1,\dots, j_{2k}\in [n]}\mathbb E_{\mathcal H_n} \left[\Delta^{(l)}_{j_1j_2}\Delta^{(l)}_{j_2j_3}\cdots \Delta^{(l)}_{j_{2k}j_1}\right]=\sum_{w\in W_{2k,l}}\mathbb E_{\mathcal H_n}\left[\prod_{t=1}^{2kl} (A_{i_{t-1}i_t}^{e_{i_t}}-\overline{A}_{i_{t-1}i_t}^{e_{i_t}})\right].
		\end{align}
		  For each circuit, the weight it contributes to the sum is the product of $(A_{ij}^{e}-\overline{A_{ij}^{e}})$ over all the hyperedges $e$ traversed in the circuits. In order to have an upper bound on $\mathbb E_{\mathcal H_n}[\textnormal{tr}(\Delta^{(l)})^{2k}]$, we need to estimate how many such circuits are included in the sum and what are the weights they contribute.

		 We also write $w=(w_1,w_2,\dots w_{2k})$, where each $w_i$ is a self-avoiding walk of length $l$.  Let $v$ and $h$ be the number of distinct vertices and hyperedges  traversed by the circuit respectively. The idea is to bound the number of all possible circuits $w$ in \eqref{deltasum} with given $v$ and $h$, and then sum over all possible $(v,h)$ pairs.
		 
		 Fix $v$ and $h$, for any circuit $w$  we form a labeled multigraph  $G(w)$ with labeled vertices  $\{1,\dots,v\}$ and  labeled multiple edges $\{e_1,\dots, e_h\}$ by the following rules:
		 \begin{itemize}
          \item Label the vertices in $G(w)$ by the order they first appear in $w$, starting from $1$. For any pair vertices $i,j\in [v]$, we add an edge between $i,j$ in $G(w)$ whenever a  hyperedge appears  between the $i$-th  and $j$-th distinct vertices in the circuit $w$. $G(w)$ is a multigraph since it's possible that for some  $i,j$,  there exists two distinct hyperedges connecting  the $i$-th  and $j$-th distinct vertices in $w$, which corresponds to two distinct edges in $G(w)$ connecting $i,j$. 
         \item Label the edges in $G(w)$ by the order in which the corresponding hyperedge first appears in $w$  from $e_1$ to $e_h$. Note that that the number of edges in $G(w)$ is at least $h$, since distinct edges in $G(w)$ can get the same hyperedge labels.
         At the end we obtain a multigraph $G(w)=(V(w), E(w))$ with vertex set $\{1,\dots,v\}$ and edge set $E(w)$ with hyperedge labels in $\{e_1,\dots e_h\}$.	 
		 \end{itemize}
		
		It's crucial to see that the labeling of vertices and edges in $G(w)$ is in order, and it tells us how the circuit $w$ is traversed. Consider any edge in $G(w)$ such that its right endpoint (in the order of the traversal of $w$) is a new vertex that has not been traversed by $w$. We call it a \textit{tree edge}. Denote by $T(w)$ the tree spanned by those edges. It's clear for the construction that $T(w)$ includes all vertices in $G(w)$, so $T(w)$ is a spanning tree of $G(w)$. Since the labels of vertices and edges are given in $G(w)$, $T(w)$ is uniquely defined. See Figure \ref{Gw} for an example.

	For a given $w\in W_{2k,l}$ with distinct hyperedges $e_1,\dots, e_h$, define $\textnormal{end}(e_i)$ to be the set of vertices in $V(w)$ such that they are the endpoints of edges with label $e_i$ in $G(w)$. For example, consider a hyperedge $e_1=\{1,2,3,4\}$ such that $\{1,2\}, \{1,3\}$ are all the edges in $G(w)$ with labels $e_1$, then $\textnormal{end}(e_1)=\{1,2,3\}$.  We consider circuits $w$ in three different cases and estimate their contribution to \eqref{deltasum} separately.

\textbf{Case (1)}. We first consider $w\in W_{2k,l}$ such that
\begin{itemize}
    \item each hyperedge label in $\{e_i\}_{1\leq i\leq h}$ appears exactly once on the edges of $G(w)$; 
    \item vertices in $e_i\setminus \textnormal{end}(e_i)$ are all distinct for $1\leq i\leq h$, and they are not vertices with labels in $V(w)$.
\end{itemize} 
The first condition implies the number of edges in $G(w)$ is $h$. The second condition implies that there are exactly $(d-2)h+v$ many distinct vertices in $w$. We will break each self-avoiding walk $w_i$ into three types of successive sub-walks where each sub-walk is exactly one of the following 3 types, and we encode these sub-walks as follows.
 \begin{itemize}

 \item Type 1: hyperedges with corresponding edges  in $G(w)\setminus T(w)$. Given our position in the circuit $w$, we can encode a hyperedge of this type by its right-end vertex. Hyperedges of Type 1 breaks the walk $w_i$ into disjoint sub-walks, and we partition these sub-walks into Type 2 and 3 below. 
	\item  Type 2: sub-walks such that all their hyperedges  correspond to edges of $T(w)$ and have been traversed already  by $w_1,\dots, w_{i-1}$. Each sub-walk is a part of a self-avoiding walk, and it is a path contained in the tree $T(w)$. Given its initial and its end vertices, there will be exactly one such path in $T(w)$. Therefore these walks can be encoded by the end vertices. 
	\item Type 3: sub-walks such that their hyperedges correspond to edges of $T(w)$ and  they are being traversed for the first time. Given the initial vertex of a sub-walk of this type, since it is traversing new edges and knowing in what order the vertices are discovered, we can encode these walks by its length, and from the given length, we know at which vertex the sub-walk ends.
 \end{itemize}
 
We encode any Type 1, Type 2, or Type 3 sub-walk by $0$ if the sub-walk is empty.
Now we can decompose each $w_i$ into sequences characterizing by its sub-walks:
\begin{align}\label{triple}
(p_1,q_1,r_1),(p_2,q_2,r_2),\dots, (p_t,q_t,r_t).
\end{align} 
Here $r_1,\dots r_{t-1}$ are codes from sub-walks of Type 1. From the way we encode such hyperedges, we have $r_i\in \{1,\dots v\}$ for $1\leq i\leq t-1$. Type 2 and Type 3 sub-walks are encoded by $p_1,\dots ,p_t$ and $q_1,\dots, q_t$ respectively. Since Type 1 hyperedges break $w$ into disjoint pieces, we use $(p_t,q_t,r_t)$ to represent the last piece of the sub-walk and make $r_t=0$. Each $p_i$ represents the right-end vertex of the Type 2 sub-walk, and $p_i=0$ if it the sub-walk is  empty, hence $p_i\in \{0,\dots v\}$ for $1\leq i\leq t$. Each $q_i$ represents the length of Type 3 sub-walks, so $q_i\in\{0,\dots l\}$ for $1\leq i\leq t$. From the way we encode these sub-walks, there are at most $(v+1)^2(l+1)$ many possibilities for each triplet $(p_j,q_j,r_j)$.

\begin{figure}
			\includegraphics[width=0.4\linewidth]{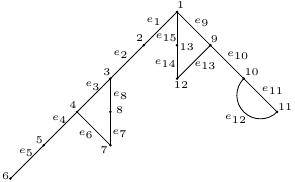}
			\caption{A multigraph $G(w)$ associated to a circuit $w=(w_1,\dots, w_4)$ of length $2kl$ with $k=2,l=5$. $w_1=(1,e_1,2,e_2,3,e_3,4,e_4,5,e_5,6), 
w_2=(6,e_5,5,e_4,4,e_6,7,e_7,8,e_8,3),  w_3=(3,e_2,2,e_1,1,e_9,9,e_{10},10,e_{11},11),w_4=(11,e_{12},10,e_{10},9,e_{13},12,e_{14},13,e_{15},1)$. Edges that are not included in $T(w)$ are $\{e_8, e_{12},e_{15}\}$.  The triplet sequences associated to  the 4 self-avoiding walks $\{w_i\}_{i=1}^4$ are given by $(0,6,0),  \quad (4,2,3), (0,0,0),\quad  (1,3,0)$, \quad  and $ (0,0,10),(9,2,1), (0,0,0)$, respectively.}\label{Gw}
		\end{figure}
		
We now consider how many ways we can concatenate sub-walks encoded by the triplets to form a circuit $w$.
All triples with $r_j\in [v]$ for $1\leq j\leq t-1$ indicate the traversal of an edge not in $T(w)$.  Since we know the number of  edges in $G(w)\setminus T(w)$  is $(h-v+1)$, and within a self-avoiding walk $w_i$, edges on $G(w)$ can be traversed at most once, the length of the triples in \eqref{triple} satisfies  $t-1 \leq h-v+1$, which implies $t\leq h-v+2$. Since each hyperedge can be traversed at most $2k$ many times by $w$ due to the constraint that the circuits $w$ of length $2kl$ are formed by self-avoiding walks, so the number of  triple sequences for fixed $v,h$ is at most
$
	[(v+1)^2(l+1)]^{2k(2+h-v)}.
$

There are multiple $w$ with the same code sequence.  However, they must all have the same  number of vertices and edges, and the positions where vertices and hyperedges are repeated must be the same. The number of ordered sequences of $v$ distinct vertices  is at most $n^v$. Given the vertex sequence, the number of ordered  sequences of $h$ distinct hyperedges in $K_{n,d}$ is at most ${n\choose d-2}^h$. This is because for a hyperedge $e$ between two vertices $i,j$, the number of possible hyperedges containing $i,j$ is at most ${n \choose d-2}$. Therefore, given $v,h$, the number of circuits that share the same triple sequence \eqref{triple} is  at most $n^v{n\choose d-2}^h$. 
	 Combining all the two estimates, the number of all possible circuits $w$ with fixed $v,h$ in Case (1) is at most 
\begin{align}\label{eq:case1choice}
    	n^v{n\choose d-2}^h[(v+1)^2(l+1)]^{2k(2+h-v)}.
\end{align}

Now we consider the expected weight of each circuit in the sum \eqref{deltasum}. Given $\sigma$, if $i,j\in e$, we have $A_{ij}^e\sim \text{Ber}\left(p_{\sigma(e)}\right)$, where $p_{\sigma(e)}=\frac{a}{{n\choose d-1}}$ if vertices in $e$ have the same $\pm$ spins and $p_{\sigma(e)}=\frac{b}{{n\choose d-1}}$ otherwise. For a given hyperedge appearing in $w$ with multiplicity $m\in\{1,\dots, 2k\}$, the corresponding expectation $\mathbb E_{\mathcal H_n}\left[(A_{ij}^{e}-\overline{A_{ij}^{e}})^m \right]$ is $0$ if $m=1$.  Since $0\leq A_{ij}^e\leq 1$, 
for $m\geq 2$, we have 
\begin{align}\label{sigma}
\mathbb E_{\mathcal H_n}\left[(A_{ij}^e-\overline{A_{ij}^e})^m \mid \sigma\right]\leq \mathbb E_{\mathcal H_n}\left[(A_{ij}^e-\overline{A_{ij}^e})^2 \mid \sigma\right]\leq p_{\sigma(e)}.	
\end{align}
For any hyperedge $e$ corresponding to an edge in $G(w) \setminus T(w)$ we have the upper bound 
\begin{align}\label{cyclebound}
p_{\sigma(e)}\leq \frac{a\vee b}{{{n}\choose{d-1}}}.
\end{align}
 Taking the expectation over $\sigma$   we have 
\begin{align}\label{expbound}
\mathbb E_{\sigma }[p_{\sigma(e)}]=\frac{a+(2^{d-1}-1)b}{2^{d-1}{n\choose d-1}}= \frac{\alpha}{(d-1){{n}\choose{d-1}}}.	
\end{align}
Recall the weight of each circuit in the sum \eqref{deltasum} is given by
$$ 
\mathbb E_{\mathcal H_n}\left[\prod_{t=1}^{2kl} (A_{i_{t-1}i_t}^{e_{i_t}}-\overline{A}_{i_{t-1}i_t}^{e_{i_t}})\right].$$ Conditioned on $\sigma$, $(A_{i_{t-1}i_t}^{e_{i_t}}-\overline{A}_{i_{t-1}i_t}^{e_{i_t}})$ are independent  random variables for distinct hyperedges. Denote these distinct hyperedges by $e_1,\dots e_h$ with multiplicity $m_1,\dots m_h$ and we temporarily order them such that $e_1,\dots e_{v-1}$ are the hyperedges corresponding to edges on $T(w)$. Introduce the random variables  $A^{e_i}\sim  \text{Ber}\left(p_{\sigma(e_i)}\right)$ for $1\leq i\leq h$ and denote $\overline{A^{e_i}}=\mathbb E_{\mathcal H_n}[A^{e_i}\mid \sigma]$. Therefore from \eqref{sigma} we have
\begin{align*}
	&\mathbb E_{\mathcal H_n}\left[\prod_{t=1}^{2kl} (A_{i_{t-1}i_t}^{e_{i_t}}-\overline{A}_{i_{t-1}i_t}^{e_{i_t}})\right]
	=\mathbb E_{\sigma}\left[\mathbb E_{\mathcal H_n}\left[\prod_{t=1}^{2kl} (A_{i_{t-1}i_t}^{e_{i_t}}-\overline{A}_{i_{t-1}i_t}^{e_{i_t}})\mid \sigma \right]\right]\\&=\mathbb E_{\sigma}\left[\mathbb \prod_{i=1}^h E_{\mathcal H_n}\left[(A^{e_i}-\overline{A^{e_i}})^{m_i}\mid \sigma\right] \right]
	\leq  \mathbb E_{\sigma}\left[ \prod_{i=1}^h p_{\sigma(e_i)}  \right]. 
\end{align*}	
We use the bound \eqref{cyclebound} for $p_{\sigma(e_{v})},\dots, p_{\sigma(e_h)}$, which implies
\begin{align}\label{cyclebound2}
		\mathbb E_{\sigma} \left[ \prod_{i=1}^h p_{\sigma(e_i)}  \right]&\leq \left(\frac{a\vee b}{{{n}\choose{d-1}}}\right)^{h-v+1}\mathbb E_{\sigma} \left[ \prod_{i=1}^{v-1} p_{\sigma(e_i)}  \right].
\end{align}

From the second condition for $w$ in  Case (1), any two hyperedges among $\{e_1,\dots e_{v-1}\}$ share at most $1$ vertex,  $p_{\sigma(e_i)},p_{\sigma(e_j)}$ are pairwise independent for all $1\leq i<j\leq v-1$. Moreover, since the corresponding edges of $e_1,\dots e_{v-1}$ forms the spanning tree $T(w)$, taking any $e_j$ such that the corresponding edge in $T(w)$ is attached to some leaf, we know $e_j$ and $\bigcup_{i\not=j, 1\leq i\leq v} e_i$ share exactly one common vertex, therefore $p_{\sigma(e_j)}$ is independent of $  \prod_{1\leq i\leq v-1, i\not=j} p_{\sigma(e_i)}$. We then have
\begin{align}\label{treerecursion}
\mathbb E_{\sigma} \left[ \prod_{i=1}^{v-1} p_{\sigma(e_i)}  \right]=\mathbb E_{\sigma} [p_{\sigma(e_j)}]\cdot \mathbb E_{\sigma} \left[ \prod_{1\leq i\leq v-1, i\not=j} p_{\sigma(e_i)}  \right].
\end{align}
Now the corresponding edges of all hyperedges $\{e_1,\dots e_{v-1}\}\setminus\{e_j\}$ form a tree in $G(w)$ again and the factorization of expectation in \eqref{treerecursion} can proceed as long as  we have some edge attached to leaves. Repeating \eqref{treerecursion} recursively, with \eqref{expbound}, we have 
\begin{align}\label{treebound}
\mathbb E_{\sigma} \left[ \prod_{i=1}^{v-1} p_{\sigma(e_i)}  \right]=\prod_{i=1}^{v-1} \mathbb E_{\sigma}[p_{\sigma(e_i)}]= \left(\frac{\alpha}{(d-1){{n}\choose{d-1}}}\right)^{v-1}. 
\end{align}

 Since every hyperedge in $w$ must be visited at least twice to make its expected weight non-zero, and $w$ is of length $2kl$, we must have $h\leq kl$. In the multigraph $G(w)$, we have the constraint $v\leq h+1\leq kl+1$. Since the first self-avoiding walk in $w$ of length $l$ takes $l+1$ distinct vertices, we also have $v\geq l+1$. So the possible range of $v$ is $l+1\leq v\leq kl+1$ and $h$ satisfies $v-1\leq h\leq kl$. 
 
 Putting all the estimates in \eqref{eq:case1choice}, \eqref{cyclebound2},  and \eqref{treebound} together, for fixed $v,h$, the total contribution of self-avoiding walks from $W_{2k,l}'$ to the sum is bounded by 
 \begin{align*}
 &n^v{{n}\choose{d-2}}^h[(v+1)^2(l+1)]^{2k(2+h-v)}\left(\frac{\alpha}{(d-1){{n}\choose{d-1}}}\right	)^{v-1}\left(\frac{a\vee b}{{{n}\choose{d-1}}}\right)^{h-v+1}.
 \end{align*}
 Denote $S_1$ to be the sum of all contributions from walks in Case (1). Therefore 
\begin{align} \label{expect}
	S_1\leq \sum_{v=l+1}^{kl+1}\sum_{h=v-1}^{kl}n^v\left(\frac{d-1}{n-d+2}\right)^h\left(\frac{\alpha}{d-1}\right	)^{v-1} [(v+1)^2(l+1)]^{2k(2+h-v)}(a\vee b)^{h-v+1}.
	\end{align}
When $l=O(\log n)$, Since $d,k$ are fixed, for sufficiently large $n$,  $\left(\frac{n}{n-d+2}\right)^h\leq 2.$ Then from \eqref{expect}, 
\begin{align*}
S_1	\leq &  \sum_{v=l+1}^{kl+1}\sum_{h=v-1}^{kl} 2 n^{v-h}(d-1)^{h-v+1}[(v+1)^2(l+1)]^{2k(2+h-v)}\alpha^{v-1}\left(a\vee b\right)^{h-v+1} \\
	\leq & 2\sum_{v=l+1}^{kl+1}\sum_{h=v-1}^{kl} n\left[\frac{(a\vee b)(d-1)}{n}\right]^{h-v+1} [(kl+2)^2(l+1)]^{2k(2+h-v)}\alpha^{v-1}. 
\end{align*}
Hence
\begin{align}
\frac{S_1 }{n\alpha^{kl}[(kl+2)^2(l+1)]^{2k}} 
 \leq  &2\sum_{v=l+1}^{kl+1}\alpha^{v-1-kl}\sum_{h=v-1}^{kl}\left[n^{-1}(a\vee b)(d-1)((kl+2)^2(l+1))^{2k}\right]^{h-v+1}.\label{ratio} 	 
\end{align}
Since for fixed $d,k$ and $l=O(\log n)$, $n^{-1}(a\vee b)(d-1)((kl+2)^2(l+1))^{2k}=o(1)$ for $n$ sufficiently large, the leading term in \eqref{ratio} is the term with $h=v-1$. For sufficiently large $n$,   we have 
\begin{align*}
\frac{S_1}{n\alpha^{kl}[(kl+2)^2(l+1)]^{2k}}\leq 3\sum_{v=l+1}^{kl+1}	\alpha^{v-1-kl}=3\cdot\frac{\alpha-\alpha^{(1-k)l}}{\alpha-1}\leq \frac{3\alpha}{\alpha-1}.\end{align*}
It implies that 
$    S_1=O(n \alpha^{kl}\log ^{6k}n). 
$

\textbf{Case (2)}. We now consider $w\in W_{2k,l}$ such that
\begin{itemize}
    \item the number of edges in $G(w)$ is greater than $h$.
    \item vertices in $e_i\setminus \textnormal{end}(e_i)$ are all distinct for $1\leq i\leq h$, and they are not vertices with labels in $V(w)$.
\end{itemize}

 Let $\tilde{h}$ be the number of edges in $G(w)$ with $\tilde{h}\geq h+1$.  Same as in Case (1), the number of triple sequence is at most $[(v+1)^2(l+1)]^{2k(2+\tilde h -v)}$.   Let $s_i, 1\leq i\leq h$ be the size of $\textnormal{end}(e_i)$.  By our definition of $s_i$, we have  $\sum_{i=1}^h (s_i-2)\geq \tilde{h}-h$. We first pick edges with distinct hyperedge labels and label their vertices, then label the remaining vertices from edges with repeated hyperedge labels, each has at most $(d-2)$ choices. The number of all possible circuits  in Case (2) with fixed $v, h, \tilde{h}$  is bounded by
 \begin{align*}
    &[(v+1)^2(l+1)]^{2k(2+\tilde h -v)} n^{v-(\tilde h-h)}(d-2)^{\tilde h-h} \binom{n}{d-s_1}\cdots \binom{n}{d-s_h}.
 \end{align*}
 For large $n$, the quantity above  is bounded by
 \begin{align*}
 2[(v+1)^2(l+1)]^{2k(2+\tilde h -v)}  n^{v-\tilde{h}+h}(d-2)^{\tilde h-h} \left( \frac{d-1}{n}\right)^{\tilde{h}} \binom{n}{d-1}^h.    
 \end{align*}

Now we consider the expected weight of each circuit in Case (2). In the spanning tree $T(w)$, we keep edges with distinct hyperedge labels that  appear first in the circuit $w$ and remove other edges.  This gives us a forest denoted  $F(w)$ inside $T(w)$, with at least $v-1-\tilde{h}+h$ many edges. We temporarily label those edges in the forest as $e_1,\dots, e_q$ with $q\geq v-1-\tilde{h}+h$. Then similar to the analysis of \eqref{treebound} in Case (1),  
 \begin{align*}
	&\mathbb E_{\mathcal H_n}\left[\prod_{t=1}^{2kl} (A_{i_{t-1}i_t}^{e_{i_t}}-\overline{A}_{i_{t-1}i_t}^{e_{i_t}})\right]
	\leq  \mathbb E_{\sigma}\left[ \prod_{i=1}^h p_{\sigma(e_i)}  \right]\leq \left(\frac{a\vee b}{{{n}\choose{d-1}}}\right)^{\tilde{h}-v+1} \left(\frac{\alpha}{(d-1){{n}\choose{d-1}}}\right)^{v-1-\tilde{h}+h}. 
\end{align*}
  Since every distinct hyperedge in $w$ must be visited at least twice, we must have $l\leq h\leq kl$. In the multigraph $G(w)$, we have the constraint $l+1\leq v\leq \tilde{h}+1$.  Therefore we have 
  \begin{align*}
      S_2
      \leq & 2\sum_{h=l}^{kl}\sum_{\tilde{h}=h+1}^{2kl}\sum_{v=l+1}^{\tilde{h}+1}[(v+1)^2(l+1)]^{2k(2+\tilde h -v)}n^{v-{\tilde h}+h} (d-2)^{\tilde{h}-h}\left( \frac{d-1}{n}\right)^{\tilde{h}} \binom{n}{d-1}^h \\
       &\cdot \left(\frac{a\vee b}{{{n}\choose{d-1}}}\right)^{\tilde{h}-v+1} \left(\frac{\alpha}{(d-1){{n}\choose{d-1}}}\right)^{v-1-\tilde{h}+h}
      =O(\alpha^{kl} \log^{6k}(n)).
  \end{align*}


 \textbf{Case (3)}.  We now consider $w\in W_{2k,l}$  not included in Cases (1) or Case (2), which satisfies that
 \begin{itemize}
     \item for some $i\not=j$, there are common vertices in $e_i\setminus \textnormal{end}(e_i)$ and $e_j\setminus \textnormal{end}(e_j)$;
     \item or there are  vertices  in $e_i\setminus \textnormal{end}(e_i)$ with labels in $V(w)$.
 \end{itemize}
 
 Let $v,h, \tilde{h}, s_i$ be defined in the same way as in Case (2).
 The number of triple sequence is at most $[(v+1)^2(l+1)]^{2k(2+\tilde h -v)}$. Consider the  forest $F(w)$ introduced in Case (2) as a subgraph of $T(w)$, which has at least $(v-1-\tilde{h}+h)$ many edges with distinct hyperedge labels. We temporarily denote the edges by $e_1,\dots, e_q$, and the ordering is chosen such that $e_1$ is adjacent to a leaf in $F(w)$, and each $e_i, i\leq 2$ is adjacent to a leaf in $F(w)\setminus \{e_1,\dots,e_{i-1}\}$.
 
 For $1\leq i\leq q$, we call $e_i$ a \textit{bad} hyperedge if $e_i\setminus \textnormal{end}(e_i)$ share a vertex with some $e_j\setminus \textnormal{end}(e_j)$ with $j>i$,  or there are  vertices  in $e_i\setminus \textnormal{end}(e_i)$ with labels in $V(w)$.  Suppose among them there are $t$ bad hyperedges with $0\leq t\leq v-1$. Let $\delta_i=1$ if $e_i$ is a bad hyperedge, and $\delta_i=0$ otherwise. Then the number of all possible circuits  in Case (3) with fixed $v, h, \tilde{h}$, and $t$,  is bounded by
 \begin{align*}
    &[(v+1)^2(l+1)]^{2k(2+\tilde h -v)} n^{v-\tilde h +h}(d-2)^{\tilde h-h} \binom{n}{d-s_1-\delta_1}\cdots \binom{n}{d-s_h-\delta_h}\\
 \leq &
    2[(v+1)^2(l+1)]^{2k(2+\tilde h -v)} n^{v-\tilde h+h}(d-2)^{\tilde h-h}\left( \frac{d-1}{n}\right)^{\tilde{h}+t} \binom{n}{d-1}^h. \notag
 \end{align*}
After removing the $t$ edges with bad hyperedge labels from the forest $F(w)$,  we can do the same analysis as in Case (2). The expected weight of each circuit in Case (3) with given $v,h, \tilde{h}, t$ now satisfies
 \begin{align*}
	&\mathbb E_{\mathcal H_n}\left[\prod_{t=1}^{2kl} (A_{i_{t-1}i_t}^{e_{i_t}}-\overline{A}_{i_{t-1}i_t}^{e_{i_t}})\right]
	\leq \left(\frac{a\vee b}{{{n}\choose{d-1}}}\right)^{\tilde{h}-v+1+t}\left(\frac{\alpha}{(d-1){{n}\choose{d-1}}}\right)^{v-1-\tilde{h}+h-t}.
\end{align*}
  Let  $S_3$ be the total contribution from circuits in Case (3) to \eqref{deltasum} . Then 
  \begin{align*}
S_3\leq & \sum_{h=l}^{kl}\sum_{\tilde{h}=h}^{2kl}\sum_{v=l+1}^{\tilde{h}+1}\sum_{t=0}^{v-1}2[(v+1)^2(l+1)]^{2k(2+\tilde h -v)} n^{v-\tilde h +h}(d-2)^{\tilde h-h}\left( \frac{d-1}{n}\right)^{\tilde{h}+t} \binom{n}{d-1}^h\\
 &\cdot \left(\frac{a\vee b}{{{n}\choose{d-1}}}\right)^{\tilde{h}-v+1+t} \left(\frac{\alpha}{(d-1){{n}\choose{d-1}}}\right)^{v-1-\tilde{h}+h-t}= O(n\alpha^{kl} \log^{6k}n).
  \end{align*}

From the estimates on $S_1,S_2$ and $S_3$, Lemma \ref{expectboundmoment} holds.
\end{proof}

With Lemma \ref{expectboundmoment}, we are able to derive \eqref{specrho}. 
 For any fixed $\epsilon>0$, choose $k$ such that  $1-2k\epsilon<0$, using Markov inequality, we have
\begin{align*}
\mathbb P_{\mathcal H_n}(\rho(\Delta^{(l)})\geq n^{\epsilon}\alpha^{l/2})\leq \frac{\mathbb E_{\mathcal H_n}(\rho(\Delta^{(l)})^{2k})}{n^{2k\epsilon} \alpha^{kl}}= O(n^{1-2k\epsilon}\log^{6k}n).
\end{align*} 
This implies \eqref{specrho} in the statement of Theorem \ref{matrixexpansionthm}.  
 
 \subsection{Proof of  \eqref{specrho2} in Theorem \ref{matrixexpansionthm}}
 
 Using a similar argument as in the proof of Lemma \ref{expectboundmoment}, we can prove the following estimate of $\rho(\Gamma^{(l,m)})$. The proof is given in Appendix \ref{sec:appendix_moment}.
 
\begin{lemma}\label{lemma:bound_on_gamma}
For $l=O(\log n)$, fixed $k$, and any $1\leq m\leq l$,  there exists a constant $C>0$ such that 
\begin{align}
    \mathbb E_{\mathcal H_n}[\rho(\Gamma^{(l,m)})^{2k}]\leq C n^{1-2k} \alpha^{k(l+m-2)}\log ^{14k}n.
\end{align}
\end{lemma}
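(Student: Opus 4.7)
The plan is to follow the trace-moment method used in the proof of Lemma~\ref{expectboundmoment}, adapted to the non-symmetric matrix $\Gamma^{(l,m)}$. Since $\Gamma^{(l,m)}$ need not be symmetric, the starting point is the standard bound
\begin{align*}
\rho(\Gamma^{(l,m)})^{2k}\le\operatorname{tr}\bigl((\Gamma^{(l,m)}(\Gamma^{(l,m)})^{\top})^{k}\bigr),
\end{align*}
and the right-hand side expands as a sum over circuits of length $2kl$ formed by concatenating $2k$ walks drawn alternately from $R^{m}$ and its reverse. Each such walk carries the three-piece weight dictated by \eqref{Gammaij}: a centered prefix $\prod_{t=1}^{l-m}(A^{e_{i_{t}}}_{i_{t-1}i_{t}}-\overline{A}^{e_{i_{t}}}_{i_{t-1}i_{t}})$ of length $l-m$, a single deterministic middle factor $\overline{A}$, and a raw suffix $\prod A$ of length $m-1$.

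Two features specific to $R^{m}$ drive the improvement over Lemma~\ref{expectboundmoment}. First, every $w\in R^{m}_{ij}$ is a concatenation of two self-avoiding walks that is itself \emph{not} a self-avoiding walk, so the two pieces must share either a vertex beyond the joining vertex or an extra hyperedge. Hence $w$ visits at most $l$ distinct vertices instead of the $l+1$ available to a SAW, and across $2k$ walks in the circuit this yields an overall $n^{-2k}$ saving relative to the $\Delta^{(l)}$ analysis. Second, the $m-1$ raw edges and the middle edge do not require pairing to produce a non-zero expectation: conditionally on $\sigma$ they each contribute $p_{\sigma(e)}$ regardless of multiplicity, while only the centered edges must appear at least twice across the entire circuit. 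This relaxed pairing rule permits up to $m$ extra singly-visited hyperedges per walk, each contributing a factor of order $\alpha/((d-1)\binom{n}{d-1})$ in expectation after the tree-recursion step, which accounts for the $\alpha^{k(l+m-2)}$ exponent in place of $\alpha^{kl}$.

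With these observations I would then run the same three-case decomposition of $G(w)$ as in the proof of Lemma~\ref{expectboundmoment}: the clean Case~(1) in which every hyperedge label appears on exactly one edge of $G(w)$ with fresh internal vertices, Case~(2) in which some label appears on multiple edges, and Case~(3) with bad hyperedges sharing extra internal vertices. Each walk is encoded by the triple sequence \eqref{triple}, augmented by one extra coordinate per walk to record the position and type of the forced $R^{m}$-overlap and to mark the middle/raw interface; this bumps the exponent on the logarithmic factor from $6k$ to $14k$. The tree-recursion bound \eqref{treebound} still applies on the spanning forest $F(w)$ of distinct-label edges, now treating centered, middle, and raw tree edges uniformly with weight $\alpha/((d-1)\binom{n}{d-1})$, and off-forest edges with weight $(a\vee b)/\binom{n}{d-1}$. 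Summing over the admissible ranges of $(v,h,\tilde h,t)$ as in the proof of Lemma~\ref{expectboundmoment} then produces the stated bound.

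The main obstacle will be justifying the simultaneous $n^{-2k}$ vertex-loss factor and the $\alpha^{k(m-2)}$ edge-gain factor without double-counting. In particular, one must check that the forced overlap inside each $R^{m}$-walk genuinely removes one distinct vertex even in degenerate configurations where that overlap coincides with vertices shared with a neighboring walk in the circuit, and that the three edge types (centered, middle, raw) are correctly weighted in the tree recursion so as to produce the right power of $\alpha$. Once these bookkeeping issues are resolved, the rest of the argument is a direct extension of the proof of Lemma~\ref{expectboundmoment}.
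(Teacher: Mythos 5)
Your overall framework (bounding $\rho(\Gamma^{(l,m)})^{2k}$ by $\operatorname{tr}\bigl((\Gamma^{(l,m)}(\Gamma^{(l,m)})^{\top})^{k}\bigr)$, expanding over circuits of $2k$ concatenated walks with centered prefix, one $\overline{A}$ factor, and raw suffix, then re-running the three-case encoding) is the same as the paper's, but the step where you generate the crucial $n^{-2k}$ factor has a genuine gap. You attribute it to the non-self-avoiding property of $R^m$: "the two pieces must share either a vertex beyond the joining vertex or an extra hyperedge. Hence $w$ visits at most $l$ distinct vertices instead of $l+1$." That "hence" is a non sequitur: the SAW conditions in Definition \ref{def:walk} can fail purely through two hyperedges sharing a non-walk vertex (or a repeated hyperedge), in which case all $l+1$ walk vertices of $w$ are distinct and no vertex entropy is lost. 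Even if one converts each per-walk overlap into a generic factor $1/n$ (repeated vertex, constrained internal vertex, or repeated hyperedge), the $2k$ overlap constraints need not be independent across the circuit: for example with $k=1$, two walks sharing the same centered prefix and the same raw suffix but joined by two \emph{different} middle hyperedges have a single common overlap, so only one factor $1/n$ is extracted from your mechanism; the missing factor there comes from the second middle hyperedge being an excess edge, not from any vertex loss. You flag this degeneracy yourself but do not resolve it, and resolving it is exactly the content of the lemma.

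The paper's proof avoids this entirely and in fact never uses that the walks lie in $R^m$ rather than $Q^m$. Its source of $n^{1-2k}$ is the $2k$ middle $\overline{A}$-weighted hyperedges themselves: they are \emph{excluded} from the multigraph $G(w)$ and from $h$, their right endpoints are recorded by $2k$ extra labels in $\{1,\dots,v\}$ (this, not your overlap marker, is what raises the log power and yields the $v^{2k}$ factor), and each then contributes entropy at most $\binom{n}{d-2}$ against a deterministic weight $O\bigl(\binom{n}{d-1}^{-1}\bigr)$, i.e.\ a net $O(1/n)$ per walk with no compensating new-vertex or $\alpha$ gain; this simultaneously caps $h\le k(l-m)+2k(m-1)=k(l+m-2)$ and hence the $\alpha$ exponent. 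Your plan to treat centered, middle, and raw tree edges "uniformly with weight $\alpha/((d-1)\binom{n}{d-1})$" in the tree recursion forfeits exactly this mechanism, so the needed $n^{-2k}$ would have to come entirely from the unproven overlap bookkeeping (your uniform treatment also only gives exponent $k(l+m)$, though that discrepancy is a harmless constant $\alpha^{2k}$). As written, the proposal therefore does not yield the stated bound; the fix is to separate the $\overline{A}$ edges from $G(w)$ and charge them their weight against only $\binom{n}{d-2}$ choices, as in the paper.
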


With Lemma \ref{lemma:bound_on_gamma}, we can apply  the union bound and Markov inequality. For any $\epsilon>0$, choose $k>0$ such that $1-2k\epsilon<0$, we have 
\begin{align*} &\mathbb P_{\mathcal H_n}\left(\bigcup_{m=1}^l\left\{\rho(\Gamma^{(l,m)})\geq 
n^{\epsilon-1}\alpha^{(l+m)/2}\right\}\right) 
\leq \sum_{m=1}^l \mathbb P_{\mathcal H_n}\left(\rho(\Gamma^{(l,m)})\geq 
n^{\epsilon-1}\alpha^{(l+m)/2}\right) \\
\leq &\sum_{m=1}^l \frac{\mathbb E_{\mathcal H_n}\rho(\Gamma^{(l,m)})^{2k}}{n^{2k(\epsilon-1)}\alpha^{k(l+m)}}
\leq  \sum_{m=1}^l \frac{C\log^{14k} (n)\cdot n^{1-2k}\alpha^{k(l+m-2)}}{n^{2k(\epsilon-1)}\alpha^{k(l+m)}}= O\left( (\log^{14k}(n)\cdot  n^{1-2k\epsilon}\alpha^{-2k}\right).
\end{align*}
Since $1-2k\epsilon<0$,  this proves 
\eqref{specrho2} in Theorem \ref{matrixexpansionthm}.

\section{Proof of Theorem \ref{quasi}}\label{sec:thm43}

 Let $n^{\pm}$ be the number of vertices with spin $\pm$ respectively. Consider the event
 \begin{align}\label{OmegaEvent}
 \tilde{\Omega}:=\{ |n^{\pm}-\frac{n}{2}|\leq \log(n)\sqrt{n}\}.\end{align}
By Hoeffding's inequality, 
\begin{align}\label{hoffd}
\mathbb P_{\sigma}\left(|n^{\pm}-\frac{n}{2}|\geq \log(n)\sqrt{n}\right)\leq 2\exp(-2\log^2(n)), 	
\end{align} which implies $\mathbb P_{\sigma}(\tilde{\Omega})\geq 1-2\exp(-2\log^2 (n)).$
In the rest of this section we will condition on the event $\tilde{\Omega}$, which will not effect our conclusion and probability bounds, since for any event $A$, if $\mathbb P_{\mathcal H_n}(A\mid \tilde{\Omega})=1-O(n^{-\gamma})$ for some $\gamma>0$, we have 
\begin{align*}
\mathbb P_{\mathcal H_n}(A)=&\mathbb P_{\mathcal H_n}(A\mid \tilde{\Omega})\mathbb P_{\mathcal H_n}(\tilde{\Omega})+\mathbb P_{\mathcal H_n}(A\mid \tilde{\Omega}^c)\mathbb P_{\mathcal H_n}(\tilde{\Omega}^c)=1-O(n^{-\gamma}).
\end{align*}

The following identity from Equation (38) in \cite{massoulie2014community} will be helpful in the proof. 
\begin{lemma}
	For any nonnegative integers $i,j,n$ and nonnegative numbers $a,b$ such that $a/n,  b/n<1$, we have
\begin{align}\label{basic}
	\frac{ai+bj}{n}-\frac{1}{2}\left(\frac{ai+bj}{n}\right)^2\leq 1-(1-a/n)^i(1-b/n)^j\leq \frac{ai+bj}{n}.\end{align}
\end{lemma}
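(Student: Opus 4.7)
The plan is to establish the upper and lower bounds in \eqref{basic} separately by reduction to classical scalar inequalities: Bernoulli's inequality $(1-x)^k \ge 1-kx$ for $x \in [0,1]$ and nonnegative integer $k$, and the ubiquitous bound $1-x \le e^{-x}$ valid for all real $x$.

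For the upper bound, I would apply Bernoulli to each factor to obtain $(1-a/n)^i \ge 1 - ai/n$ and $(1-b/n)^j \ge 1 - bj/n$. When both right-hand sides are nonnegative, multiplying yields
\[
(1-a/n)^i(1-b/n)^j \;\ge\; \Bigl(1-\tfrac{ai}{n}\Bigr)\Bigl(1-\tfrac{bj}{n}\Bigr) \;=\; 1 - \tfrac{ai+bj}{n} + \tfrac{abij}{n^2} \;\ge\; 1 - \tfrac{ai+bj}{n},
\]
which rearranges to the desired inequality. In the edge case where one of $1-ai/n$ or $1-bj/n$ is negative, we have $(ai+bj)/n \ge 1$, so the bound is trivial since the quantity $1 - (1-a/n)^i(1-b/n)^j$ never exceeds $1$.

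For the lower bound, I would apply $1-x \le e^{-x}$ termwise to obtain $(1-a/n)^i(1-b/n)^j \le e^{-s}$, where $s := (ai+bj)/n \ge 0$. It then suffices to verify the scalar inequality $1-e^{-s} \ge s - s^2/2$ for $s \ge 0$. Setting $g(s) := 1 - e^{-s} - s + s^2/2$, one has $g(0) = 0$ and $g'(s) = e^{-s} - 1 + s \ge 0$ (again by $e^{-s} \ge 1-s$), so $g$ is nondecreasing on $[0,\infty)$ and hence nonnegative there. The lemma is a routine calibration inequality; there is no substantive obstacle, only the small care-point of handling the Bernoulli argument when $ai/n$ or $bj/n$ exceeds $1$, which is resolved by the triviality observation above.
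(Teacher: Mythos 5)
Your proof is correct. Note that the paper itself does not prove this lemma at all: it is imported verbatim as Equation (38) of Massouli\'{e}'s paper \cite{massoulie2014community}, so there is no in-paper argument to compare against. Your two-sided argument is sound and self-contained: the upper bound via Bernoulli's inequality applied to each factor, with the multiplication step justified because both lower bounds $1-ai/n$ and $1-bj/n$ are nonnegative in the main case and the inequality is trivial when $(ai+bj)/n\geq 1$ (since the left-hand product is positive under $a/n,b/n<1$); and the lower bound via $1-x\leq e^{-x}$ together with the scalar estimate $1-e^{-s}\geq s-s^{2}/2$ for $s\geq 0$, which you verify correctly by monotonicity of $g(s)=1-e^{-s}-s+s^{2}/2$. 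No gaps.
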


We will also use the following version of Chernoff bound (see \cite{boucheron2013concentration}):
\begin{lemma} \label{ChernoffBound} Let $X$ be a sum of independent random variables taking values in $\{0,1\}$. Let $\mu=\mathbb E[X]$. Then for any $\delta>0$,
we have 
\begin{align}
\mathbb P(X\geq (1+\delta)\mu)&\leq \exp (-\mu h(1+\delta)), \label{onesidedbound}\\
\mathbb P(|X-\mu|\leq \delta \mu) &\geq 1-2\exp (-\mu \tilde{h}(\delta)),\label{twosidedbound}
\end{align}
where
 \begin{align*}
        h(x):&=x\log(x)-x+1, \quad 
\tilde{h}(x):=\min\{(1+x)\log (1+x)-x, (1-x)\log(1-x)+x\}.
\end{align*}
\end{lemma}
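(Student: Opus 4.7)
The proof is a textbook application of the exponential moment (Chernoff) method, so the plan is entirely mechanical. Write $X = \sum_{i=1}^N X_i$ with $X_i \in \{0,1\}$ independent, let $p_i := \mathbb E[X_i]$, and $\mu = \sum_i p_i$. The whole argument rests on Markov's inequality applied to $e^{tX}$ for an appropriate sign of $t$, combined with the independence factorization of the moment generating function and the elementary bound $1+x \le e^x$.

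For the upper tail, fix $t > 0$ and apply Markov's inequality to $e^{tX}$. By independence,
$$\mathbb E[e^{tX}] = \prod_{i=1}^N \bigl(1 + p_i(e^t - 1)\bigr) \le \prod_{i=1}^N \exp\bigl(p_i(e^t - 1)\bigr) = \exp\bigl(\mu(e^t - 1)\bigr),$$
so
$$\mathbb P(X \ge (1+\delta)\mu) \le \exp\bigl(\mu(e^t - 1) - t(1+\delta)\mu\bigr).$$
Minimizing the right-hand side in $t > 0$ by differentiating gives the unique critical point $t = \log(1+\delta)$, which is positive since $\delta > 0$. Plugging this in collapses the exponent to $-\mu\bigl[(1+\delta)\log(1+\delta) - \delta\bigr] = -\mu h(1+\delta)$, establishing the first inequality.

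For the two-sided bound, I would repeat the argument on the lower tail with $-t$ in place of $t$: for $t > 0$, Markov applied to $e^{-tX}$ gives
$$\mathbb P(X \le (1-\delta)\mu) \le \exp\bigl(\mu(e^{-t} - 1) + t(1-\delta)\mu\bigr),$$
optimized at $t = -\log(1-\delta)$, which is positive for $0 < \delta < 1$ (and the case $\delta \ge 1$ is vacuous since the event forces $X \le 0$). The resulting exponent is $-\mu\bigl[(1-\delta)\log(1-\delta) + \delta\bigr]$. A union bound over the upper-tail event $\{X \ge (1+\delta)\mu\}$ and the lower-tail event $\{X \le (1-\delta)\mu\}$, together with the definition of $\tilde h(\delta)$ as the minimum of the two exponents, yields the factor $2\exp(-\mu \tilde h(\delta))$ in the stated two-sided inequality.

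There is no real obstacle: the only verification needed is that the critical points of the univariate exponents truly lie in $(0, \infty)$, which is immediate from the sign of $\delta$. No structural feature of the problem beyond independence is used, so the lemma holds uniformly over the underlying Bernoulli parameters $p_i$, which is exactly the generality required for the applications elsewhere in the paper.
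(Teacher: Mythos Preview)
Your proof is correct and is the standard Cram\'er--Chernoff argument; the paper does not give its own proof but simply cites \cite{boucheron2013concentration}, where exactly this moment-generating-function-plus-optimization derivation appears. There is nothing to add.
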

For any $t\geq 0$, the number of vertices with spin $\pm$ at distance $t$ (respectively $\leq $) of vertices $i$ is denoted $U_t^{\pm}(i)$ (respectively, $U_{\leq t}^{\pm}(i)$) and we know $S_t(i)=U_t^+(i) + U_t^-(i)$. We will omit index $i$ when considering quantities related to a fixed vertex $i$. Let $n^{\pm}$ be the number of vertices with spin $\pm$ and $\mathcal N^{\pm}$ be the set of vertices with spin $\pm$. For a fixed vertex $i$. Let 
\begin{align}\label{Ftsigma}
 \mathcal F_{t}:=\sigma(U_k^+, U_k^-, k\leq t ,\sigma_i, 1\leq i\leq n)\end{align}
 be the $\sigma$-algebra generated by $\{U_k^+, U_k^-, 0\leq k\leq t\}$ and $\{\sigma_i, 1\leq i\leq n\}$.  In the remainder of
the section we condition on the spins $\sigma_i$ of all $i\in [n]$ and assume $\tilde{\Omega}$ holds. We denote $\mathbb P(\cdot ):=\mathbb P_{\mathcal H_n}(\cdot \mid \tilde{\Omega})$. 

A main difficulty to analyze $U_t^+, U_t^-$ compared to the graph SBM in \cite{massoulie2014community} is that  $U_k^{\pm}$ are no longer independent conditioned on $\mathcal F_{k-1}$. Instead, we can only approximate  $U_k^{\pm}$ by counting subsets connected to $V_{k-1}$. To make it more precise, we have the following definition for connected-subsets.

\begin{definition}\label{S}
A \textit{connected $s$-subset}  in $V_{k}$ for $1\leq s\leq d-1$ is a subset of size $s$ which is contained in some hyperedge $e$ in $H$  and the rest $d-s$ vertices in  $e$ are from $V_{k-1}$ (see Figure \ref{fig:Q} for an example).
	Define $U_{k,s}^{(r)}, 0\leq r\leq s$ to be the number of \textit{connected $s$-subsets} in $V_k$ where exactly $r$ many vertices  have  $+$ spins.  For convenience, we write $U_{k}^{(r)}:=U_{k,d-1}^{(r)}$ for $0\leq r\leq d-1$.  Let $U_{k,s}=\sum_{r=0}^{s}U_{k,s}^{(r)}$ be the number of all connected $s$-subsets in $V_k$.	\end{definition}

\begin{figure}
	\includegraphics[width=0.2\linewidth]{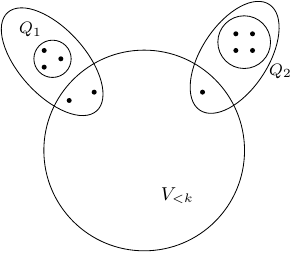}
	\caption{$d=5$, $Q_1$ is a connected $3$-subsets in $V_k$ and $Q_2$ is a connected $4$-subsets in $V_k$.}\label{fig:Q}
\end{figure}

We will show that $\sum_{r=0}^{d-1}rU_k^{(r)}$ is a good approximation of $U_k^+$ and $\sum_{r=0}^{d-1}(d-1-r)U_{k}^{(r)}$ is a good approximation of $U_k^-$, then the concentration of $U_k^{(r)}, 0\leq r\leq d-1$ implies the concentration of $U_k^{\pm}$. 

Since each hyperedge appears independently, conditioned on $\mathcal F_{k-1}$, we know $\{U_{k}^{(r)}, 0\leq r\leq d-1\}$ are independent binomial random variables. 
For $U_k^{(d-1)}$, the number of all possible connected $(d-1)$-subsets with $d-1$ many $+$ signs is  $ {{n^+-U_{\leq k-1}^+}\choose{d-1}}$, and each such subset is included in the hypergraph if and only if it forms a hyperedge with any vertex in $V_{k-1}$. Therefore each such subset is included independently with probability $$ 1-\left(1-\frac{a}{{{n}\choose{d-1}}}\right)^{U_{k-1}^+}\left(1-\frac{b}{{{n}\choose{d-1}}}\right)^{U_{k-1}^-}.$$ 
Similarly, we have the following distributions for $U_{k}^{(r)}, 1\leq r\leq d-1$:

\begin{align}
U_{k}^{(d-1)}&\sim \textnormal{Bin}\left({{n^+-U_{\leq k-1}^+}\choose{d-1}}, 1-\left(1-\frac{a}{{{n}\choose{d-1}}}\right)^{U_{k-1}^+}\left(1-\frac{b}{{{n}\choose{d-1}}}\right)^{U_{k-1}^-}\right)\label{mean1},\\
U_{k}^{(0)}&\sim \textnormal{Bin}\left({{n^--U_{\leq {k-1}}^-}\choose{d-1}}, 1-\left(1-\frac{a}{{{n}\choose{d-1}}}\right)^{U_{k-1}^-}\left(1-\frac{b}{{{n}\choose{d-1}}}\right)^{U_{k-1}^+}\right),\label{mean2}
\end{align}
and for $1\leq r\leq d-2$,
\begin{align}
U_{k}^{(r)}&\sim \textnormal{Bin}\left({{n^+-U_{\leq {k-1}}^+}\choose{r}}{{n^--U_{\leq {k-1}}^-}\choose{d-1-r}}, 1-\left(1-\frac{b}{{{n}\choose{d-1}}}\right)^{S_{k-1}}\right) .\label{mean3}
\end{align}

For two random variable $X, Y$, we denote $X \preceq Y$ if $X$ is stochastically dominant by $Y$, i.e., $\mathbb P(X\leq x)\geq \mathbb P(Y\leq x)$ for any $x\in \mathbb R$. 
 We denote 
 $ U_{k}^{*}:=\sum_{s=1}^{d-2}U_{k,s}$  to be the number of all connected $s$-subsets  in $V_k$ for $1\leq s\leq d-2$. 
 
 For each $1\leq s\leq d-2$, conditioned on $\mathcal F_{k-1}$, the number of possible $s$-subsets is at most $n\choose s$, and each subset is included in the hypergraph independently with probability at most $\left( \frac{a\vee b}{{{n}\choose{d-1}}}{{S_{k-1}}\choose{d-s}}\right)\wedge 1$, so we have
 \begin{align}\label{UKR}
 U_{k,s}&\preceq  \textnormal{Bin}\left({{n}\choose{s}},\frac{a\vee b}{{{n}\choose{d-1}}}{{S_{k-1}}\choose{d-s}}\wedge 1\right).	
 \end{align} 
 
 With the definitions above, we have the following inequality for $U_{k}^{\pm}$ by counting the number of $\pm$ signs from each type of subsets: 
\begin{align}
U_{k}^+ &\leq \sum_{r=0}^{d-1}rU_{k}^{(r)}+(d-2)U_{k}^{*}, \label{upperUT}\\
 U_{k}^- &\leq \sum_{r=0}^{d-1}(d-1-r)U_{k}^{(r)}+(d-2)U_{k}^{*}.	
\end{align}

To obtain the upper bound of $U_{k}^{\pm}$, we will show that $U_k^*$ is negligible compared to the number of $\pm$ signs from $U_{k}^{(r)}$. Since $U_{k}^{(r)}, 1\leq r\leq d-1$ are independent binomial random variables, we can  prove concentration results of these random variables. For the  lower bound of $U_{k}^{\pm}$, we need to show  that only a negligible portion of  $(d-1)$ connected subsets are overlapped, therefore  $U_{k}^+$ is lower bounded by $\sum_{r=0}^{d-1}rU_{k}^{(r)}$ minus some small term, and we can do it similarly for $U_{k}^-$. We will extensively use Chernoff bounds in Lemma \ref{ChernoffBound} to prove the concentration of $U_{k}^{\pm}$ in the following theorem.

\begin{theorem}\label{growthbound}
	Let $\epsilon\in (0,1)$,  and $l=c\log(n)$ with $c\log(\alpha)<1/4$. For any $\gamma \in (0,3/8)$,  there exists some constant $K>0$ and  such that the following holds with probability at least $1-O(n^{-\gamma})$ for all $i\in [n]$.
	\begin{enumerate}
		\item Let $T:=\inf\{t\leq l: S_t\geq K\log n\}$, then  $S_T=\Theta(\log n)$. \label{first}
		\item Let $  \epsilon_t:=\epsilon \alpha^{-(t-T)/2}$ for some $\epsilon>0$ and 
		\begin{align}\label{M}
		M:=\frac{1}{2}\begin{bmatrix}
			\alpha+\beta &\alpha-\beta\\
			\alpha-\beta &\alpha+\beta
		\end{bmatrix}.
		\end{align}
		Then for all $t,t'\in \{T,\dots l\}$, $t>t'$, the vector $\vec{U}_t:=(U_t^+,U_t^-)^{\top}$ satisfies the coordinate-wise bounds:
		\begin{align}\label{concentratecor1}
		U_t^+ &\in \left[\prod_{s=t'}^{t-1} (1-\epsilon_s), \prod_{s=t'}^{t-1} (1+\epsilon_s)\right](M^{t-t'}\vec{U}_{t'})_1,\\
		U_t^- &\in \left[\prod_{s=t'}^{t-1} (1-\epsilon_s), \prod_{s=t'}^{t-1} (1+\epsilon_s)\right](M^{t-t'}\vec{U}_{t'})_2,\label{concentratecor2}
		\end{align}
	where $(M^{t-t'}\vec{U}_{t'})_j$ is the $j$-th coordinate of the vector $M^{t-t'}\vec{U}_{t'}$ for $j=1,2$.
	\end{enumerate}
\end{theorem}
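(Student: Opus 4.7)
My plan is to first condition on the high-probability event $\tilde\Omega$ from (\ref{OmegaEvent}), under which $n^\pm = n/2 + O(\sqrt{n}\log n)$. The core observation is that, conditional on $\mathcal F_{k-1}$, the variables $\{U_k^{(r)}\}_{0 \le r \le d-1}$ are independent binomials with parameters from (\ref{mean1})--(\ref{mean3}). Linearizing with (\ref{basic}) (valid because on the relevant event $S_{k-1}\le S_l = O(\alpha^l) \le n^{1/4}$), together with the identity $\sum_{r=0}^{d-1} r\binom{d-1}{r} = (d-1)2^{d-2}$ and the definitions of $\alpha,\beta$ in (\ref{alphabeta}), a short direct calculation gives
\begin{align*}
\mathbb E\Bigl[\textstyle\sum_r r\, U_k^{(r)} \,\Big|\, \mathcal F_{k-1}\Bigr] = (M\vec U_{k-1})_1 \bigl(1+o(1)\bigr),
\end{align*}
and analogously $\mathbb E[\sum_r (d-1-r)U_k^{(r)}\mid\mathcal F_{k-1}] = (M\vec U_{k-1})_2(1+o(1))$, so the linear backbone of the growth is exactly the matrix $M$ of (\ref{M}).

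For Part (1), I would induct on $t$: while $t<T$ we have $S_{t-1}\le K\log n$, and the display above plus the $U_k^*$ bound (see below) yield $\mathbb E[S_t\mid\mathcal F_{t-1}]\le \alpha S_{t-1}(1+o(1))$. A one-sided Chernoff bound (\ref{onesidedbound}) applied to each $U_k^{(r)}$ with $\delta=1$ then gives $S_t\le 2\alpha S_{t-1}$ with failure probability $\exp(-c K\log n)$, which is $O(n^{-\gamma-2})$ once $K$ is chosen large. A union bound over $i\in[n]$ and $t\le l$ gives $S_T\le 2\alpha K\log n$ whenever $T$ is finite, and $S_T\ge K\log n$ by the definition of $T$, hence $S_T=\Theta(\log n)$.

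For Part (2), I would proceed by induction on $t$, assuming (\ref{concentratecor1})--(\ref{concentratecor2}) up to time $t-1$ so that $(M\vec U_{t-1})_{j}=\Theta(\alpha^{t-1-T} K\log n)$ for $j=1,2$. The two-sided Chernoff bound (\ref{twosidedbound}) applied to $\sum_r r U_k^{(r)}$ and $\sum_r (d-1-r) U_k^{(r)}$ then produces the one-step multiplicative bounds within $1\pm\epsilon_{t-1}/2$ of the predicted means, with failure probability
\begin{align*}
\exp\bigl(-c\,\epsilon_{t-1}^2 (M\vec U_{t-1})_1\bigr)=\exp\bigl(-c\,\epsilon^2 K\log n\bigr),
\end{align*}
which is summable over $i\in[n]$ and $T\le t'<t\le l$ once $K$ (or $\epsilon$) is large. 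Iterating the one-step bounds $U_t^\pm\in (1\pm\epsilon_{t-1})(M\vec U_{t-1})_{1,2}$ and telescoping (using that $M$ has non-negative entries) then yields (\ref{concentratecor1})--(\ref{concentratecor2}).

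The main obstacle, and the principal new feature compared to the graph argument in \cite{massoulie2014community}, is controlling the overlap term $U_k^*$ appearing in (\ref{upperUT}) --- hyperedges anchored at $V_{k-1}$ that introduce fewer than $d-1$ genuinely new distance-$k$ vertices, either because two endpoints coincide or because an endpoint already lies in $V_{\le k-1}$. The stochastic dominance (\ref{UKR}) gives $\mathbb E[U_{k,s}\mid\mathcal F_{k-1}]=O(S_{k-1}^{d-s}/n^{d-1-s})$ for $1\le s\le d-2$, which on the event $S_{k-1}=O(n^{1/4})$ is at most $O(S_{k-1}^2/n)$, hence $o(S_{k-1}/\log n)$; another one-sided Chernoff (using $S_{k-1}\ge K\log n$ after time $T$) then shows $U_k^*=o(S_{k-1})$ with probability $1-O(n^{-\gamma-2})$, so $U_k^*$ can be absorbed into the $\epsilon_{t-1}$ error. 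The second subtlety is the matching lower bound on $U_k^\pm$: since a $+$-vertex may be created by several distinct hyperedges simultaneously, $U_k^+$ differs from $\sum_r r U_k^{(r)}$ by an inclusion-exclusion correction whose dominant term is precisely an overlap count of the same form as $U_k^*$ and controlled by the same second-moment estimate, allowing the same tail bound to close the two-sided statement.
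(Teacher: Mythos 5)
Your overall architecture matches the paper's: the conditional binomial laws \eqref{mean1}--\eqref{mean3}, linearization via \eqref{basic} to identify the matrix $M$ of \eqref{M}, two-sided Chernoff concentration with $\epsilon_t=\epsilon\alpha^{-(t-T)/2}$ (so that $S_t\,\tilde h(\epsilon_t)\gtrsim \epsilon^2K\log n$ uniformly in $t$), and control of the upper-bound correction $U_k^*$ through the stochastic domination \eqref{UKR}. The genuine gap is in your lower bound. The correction there is $\sum_{i\ge 2}|W_{k,i}|$, the number of new vertices lying in two or more distinct connected $(d-1)$-subsets (each hyperedge meeting $V_{k-1}$ in exactly one vertex); this is \emph{not} of the same form as $U_k^*$, which counts hyperedges meeting $V_{k-1}$ in at least two vertices, and it is not amenable to the same tail bound. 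Each $U_{k,s}$ is stochastically dominated by a single binomial, so Chernoff gives a superpolynomially small failure probability; $|W_{k,2}|$ is a sum of dependent indicators with no such domination, and a first-moment Markov bound against the threshold $\epsilon_t S_t$ only yields a failure probability of order $\alpha^{3(t-T)/2}\log n/n$, which does not survive the union bound over the $n$ choices of the root $i$. The paper resolves this with a dedicated pair-correlation (second-moment) estimate: it writes $|W_{k,2}|^2-|W_{k,2}|=2\sum_{i<j}X_iX_j$, bounds $\mathbb E[X_iX_j\mid\mathcal F_{k-1}]=O(\alpha^{4(t-T)}\log^4 n/n^4)$ by conditioning on whether the pair shares zero, one, or at least two connecting hyperedges (the events $E_{ij}^0,E_{ij}^1,E_{ij}^2$), and then applies Markov to $|W_{k,2}|(|W_{k,2}|-1)$, obtaining failure probability $O(n^{-1-\gamma})$, with a separate first-moment bound for $\sum_{i\ge3}|W_{k,i}|$. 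Your phrase about reusing a second-moment estimate points to an estimate you never set up (your $U_k^*$ argument is first moment plus Chernoff), so the key new ingredient of this theorem --- and the reason the final guarantee is only $1-O(n^{-\gamma})$ rather than superpolynomial --- is missing from the proposal.

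A secondary, fixable error: in Part (1) you claim $S_t\le 2\alpha S_{t-1}$ for every $t\le T$ with failure probability $\exp(-cK\log n)$. This fails when $S_{t-1}$ is small, since the conditional mean is $\Theta(S_{t-1})$ rather than $\Theta(K\log n)$, so the deviation probability is only a constant. What is true, and all that is needed, is the single step at $t=T$: since $S_{T-1}\le K\log n$ by definition of $T$, the number of hyperedges meeting $V_{T-1}$ is dominated by a binomial with mean $(a\vee b)K\log n$, and comparing it to an \emph{absolute} threshold $K_2\log n$ with $K_2$ large gives failure probability $n^{-2-\gamma}$; combined with $S_T\ge K\log n$, this yields $S_T=\Theta(\log n)$. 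Also note that Lemma \ref{ChernoffBound} applies to sums of $\{0,1\}$-valued variables, so it should be applied to each $U_k^{(r)}$ separately and the resulting deviations combined, rather than directly to the weighted sum $\sum_r rU_k^{(r)}$.
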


\begin{proof}
In this proof, all constants $C_i$'s, $C, C'$ are distinct for different inequalities unless stated otherwise. By the definition of $T$, $S_{T-1}\leq K\log(n)$. Let $Z_T$ be the number of all hyperedges in $H$ that are incident to at least one vertices in $V_{T-1}$. We have
$
	S_T\leq (d-1)Z_T,
$
and since the number of all possible hyperedges including a vertex in $V_{T-1}$ is at most $ S_{T-1}{n\choose d-1}$, $Z_T$ is stochastically dominated by 
\begin{align*}
	\textnormal{Bin}\left(K\log(n){n\choose d-1}, \frac{a\vee b}{{n\choose d-1}}\right),
\end{align*}
which has mean $(a\vee b) K\log(n)$. Let $K_1=(a\vee b)K$. By \eqref{onesidedbound} in Lemma \ref{ChernoffBound}, we have  for any constant $K_2>0$,
\begin{align}
\mathbb P(Z_T\geq K_2\log(n)|\mathcal F_{T-1})\leq \exp(-K_1\log(n)h(K_2/K_1))
\end{align}

Taking $K_2>K_1$ large enough such that $K_1h(K_2/K_1)\geq 2+\gamma$, we then have
\begin{align}
\mathbb P(Z_T\geq K_2\log(n)|\mathcal F_{T-1})\leq n^{-2-\gamma}.\end{align}
So with probability at least $1-n^{-2-\gamma}$, for a fixed $i\in[n]$, $S_T\leq K_3\log(n)$ with $K_3=(d-2)K_2$. Taking a union bound over $i\in[n]$, part \eqref{first} in Lemma \ref{growthbound} holds. 
We continue to prove \eqref{concentratecor1} and \eqref{concentratecor2} in several steps.

\textbf{Step 1: base case.} For the first step, we prove \eqref{concentratecor1} and \eqref{concentratecor2} for $t=T+1,t'=T$, which is 
\begin{align}\label{eq:818}
	U_{T+1}^{\pm}\in [1-\epsilon,1+\epsilon]\left(\frac{\alpha+\beta }{2}U_{T}^{\pm }+\frac{\alpha-\beta}{2}U_{T}^{\pm}\right).
\end{align}
 This involves a two-sided estimate of $U_{T+1}^{\pm}$.  The idea is to show the expectation of $U_{T+1}^{\pm}$ conditioned on $\mathcal F_{T}$ is closed to $  \frac{\alpha+\beta }{2}U_{T}^{\pm }+\frac{\alpha-\beta}{2}U_{T}^{\pm}$, and  $U_{T+1}^{\pm}$ is concentrated around its mean.
    
 \textbf{(i) Upper bound.} Define the event $\mathcal A_T:=\{S_T\leq K_3\log n\}$. We have just shown for a fixed $i$, 
\begin{align}\label{eq:PAT}
\mathbb P(\mathcal A_T)\geq 1-n^{-2-\gamma}.	
\end{align}
 Recall $|n^{\pm}-n/2|\leq \sqrt{n}\log n$   and conditioned on $\mathcal A_T$, for some constant $C>0$,
 $$U_{\leq T}^+\leq \sum_{t=0}^T S_t\leq 1+TK_3\log n\leq 1+lK_3\log n\leq CK_3\log^2 n.$$
 
  Conditioned on $\mathcal F_T$ and $\mathcal A_T$, for sufficiently large $n$, there exists constants $C_1>0$ such that $${{n^+-U_{\leq T}^+}\choose{d-1}}\geq C_1 {\frac{n}{2}\choose{d-1}}.$$ 
  From  inequality \eqref{basic}, there exists constant $C_2>0$ such that
  \begin{align*} 1-\left(1-\frac{a}{{{n}\choose{d-1}}}\right)^{U_T^+}\left(1-\frac{b}{{{n}\choose{d-1}}}\right)^{U_T^-}&\geq \frac{aU_T^ + + b U_T^-}{{{n}\choose{d-1}}}-\frac{1}{2}\left(\frac{aU_T^ + + b U_T^-}{{{n}\choose{d-1}}}\right)^2 \\
  & \geq \frac{C_2(aU_T^ + + b U_T^-)}{{{n}\choose{d-1}}}\geq \frac{C_2(a\wedge b)K\log n}{{n \choose d-1}}.
  \end{align*}  
  Then from \eqref{mean1},   for some constant $C_3>0$, 
\begin{align*}
\mathbb E[U_{T+1}^{(d-1)} \mid \mathcal F_T,\mathcal A_T]=&{{n^+-U_{\leq T}^+}\choose{d-1}}\left(1-\left(1-\frac{a}{{{n}\choose{d-1}}}\right)^{U_T^+}\left(1-\frac{b}{{{n}\choose{d-1}}}\right)^{U_T^-}\right)\\
\geq & C_1  {\frac{n}{2}\choose{d-1}}\cdot\frac{C_2(a\wedge b)K\log n}{{n \choose d-1}}
\geq  C_3K\log n.
\end{align*}
We can choose $K$ large enough such that
$C_3K\tilde{h}(\epsilon/(2d))\geq 2+\gamma,
$
then from \eqref{twosidedbound} in Lemma \ref{ChernoffBound}, for any given $\epsilon>0$ and $\gamma \in (0,1)$, 
\begin{align*}
&\mathbb P\left(	|U_{T+1}^{(d-1)}-\mathbb E[U_{T+1}^{(d-1)}|\mathcal F_{T}]|\leq \frac{\epsilon}{2d}\mathbb E[U_{T+1}^{(d-1)}|\mathcal F_{T}]  \big| \mathcal F_{T}\right)	\\
\geq &\mathbb P\left(	|U_{T+1}^{(d-1)}-\mathbb E[U_{T+1}^{(d-1)}|\mathcal F_{T}]|\leq \frac{\epsilon}{2d}\mathbb E[U_{T+1}^{(d-1)}|\mathcal F_{T}]  \big| \mathcal F_{T},\mathcal A_T\right)\mathbb P(\mathcal A_T)\\
\geq & \left[ 1-\exp\left(-\mathbb E[U_{T+1}^{(d-1)}|\mathcal F_{T},\mathcal A_T]\tilde{h}(\epsilon/2d)\right)\right] (1-n^{-2-\gamma})
\geq  (1-n^{-2-\gamma})^2\geq 1-2n^{-2-\gamma}.
\end{align*}
From the symmetry of $\pm$ labels, the concentration of $U_{T+1}^{(0)}$ works in the same way. 
Similarly, there exists a constant $C_1>0$ such that $\mathbb E[U_{T+1}^{(r)}\mid \mathcal F_T], 1\leq r\leq d-2$: 
\begin{align*}
\mathbb E[U_{T+1}^{(r)}\mid \mathcal F_T]=&{{n^+-U_{\leq T}^+}\choose{r}}{{n^--U_{\leq T}^-}\choose{d-1-r}}\left( 1-\left(1-\frac{b}{{{n}\choose{d-1}}}\right)^{S_T}\right)
\geq  C_1K\log n.
\end{align*} 
We can choose $K$ large enough such that for all $0\leq r\leq d-1$,
$$\mathbb P\left(	\left|U_{T+1}^{(r)}-\mathbb E[U_{T+1}^{(r)} \mid \mathcal F_{T}] \right|\leq \frac{\epsilon}{2d}\mathbb E[U_{T+1}^{(r)}|\mathcal F_{T}]  \mid \mathcal F_{T}\right)\geq 1-2n^{-2-\gamma}.
$$  

Next, we estimate $ U_{T+1}^*=\sum_{s=1}^{d-2} U_{T+1,s}$. Recall from \eqref{UKR}, we have $
	U_{T+1,s}\preceq Z_{T+1,s}$ where  $$Z_{T+1,s}\sim \textnormal{Bin}\left({{n}\choose{s}},\frac{a\vee b}{{{n}\choose{d-1}}}{{S_{T}}\choose{d-s}}\right).$$
Conditioned on $\mathcal A_T$ we know $K\log n\leq S_T\leq K_3\log n$, and
\begin{align*}
 \mathbb E[Z_{T+1,s}\mid \mathcal A_T,\mathcal F_T]={n\choose s}	\frac{a\vee b}{{{n}\choose{d-1}}}{{S_{T}}\choose{d-s}}\leq C_2\log^{d-s}(n)n^{1+s-d}
\end{align*} 
for some constant $C_2>0$. 
Using the fact that $  h(x)\geq \frac{1}{2}x\log(x)$ for $x$ large enough, from \eqref{onesidedbound}, we have for any constant $\lambda>0$,  $1\leq s\leq d-2$, there exists a constant $C_3>0$ such that for large $n$,
  \begin{align}
&\mathbb P(U_{T+1,s}\geq \lambda  S_T |\mathcal F_T, \mathcal A_T)\leq   \mathbb P(Z_{T+1,s}\geq \lambda  S_T|\mathcal F_T, \mathcal A_T) \notag\\
\leq &\exp\left(-\mathbb E[Z_{T+1,s}\mid \mathcal A_T,\mathcal F_T] h\left(\frac{ \lambda S_T}{\mathbb E[Z_{T+1,s}\mid \mathcal A_T,\mathcal F_T]}\right)\right) \notag\\
\leq &\exp\left(-\frac{1}{2}\lambda S_T\log\left(\frac{\lambda S_T}{\mathbb E[Z_{T+1,s}\mid \mathcal A_T,\mathcal F_T]}\right)\right)
\leq \exp(-\lambda C_3\log^2 n)\leq n^{-2-\gamma}.\label{chernoffonesided}\end{align}
Therefore with \eqref{eq:PAT} and \eqref{chernoffonesided},
\begin{align*}
\mathbb P(U_{T+1,s} < \lambda  S_T |\mathcal F_T)&\geq \mathbb P(U_{T+1,s}<  
\lambda  S_T |\mathcal F_T, \mathcal A_T)	\mathbb P(\mathcal A_T)\geq (1-n^{-2-\gamma})^2\geq 1-2n^{-2-\gamma}.
\end{align*}
Taking $ \lambda=\frac{(\alpha-\beta)\epsilon}{4d^2}$, we have $ U_{T+1,s}\leq \frac{(\alpha-\beta)\epsilon}{4d^2} S_T$ with probability at least $1-2n^{-2-\gamma}$ for any $\gamma\in (0,1)$. 

Taking a union bound over $2\leq r\leq d-1$, it implies 
\begin{align}\label{UTstar}
	 U_{T+1}^*\leq \frac{(\alpha-\beta)\epsilon}{4d}S_T
\end{align} with probability $1-O(n^{-2-\gamma})$ for any $\gamma\in(0,1)$.

Note that $  n^{\pm}=\frac{n}{2}+O(\sqrt{n}\log n)$ and  $U_{\leq T}^{\pm}=\sum_{k=1}^T S_k=O(\log^2( n) )$. From   \eqref{basic},
\begin{align*}
	\left(1-\frac{aU_T^+ + bU_T^-}{2{n\choose d-1}}\right)\frac{aU_T^+ + bU_T^-}{{n\choose d-1}}&\leq 1-\left(1-\frac{a}{{{n}\choose{d-1}}}\right)^{U_T^+}\left(1-\frac{b}{{{n}\choose{d-1}}}\right)^{U_T^-}\leq \frac{aU_T^+ + bU_T^-}{{n\choose d-1}}. 
\end{align*}
It implies that
\begin{align}
	\mathbb E[U_{T+1}^{(d-1)}|\mathcal F_T,\mathcal A_T]&={{\frac{n}{2}+O(\sqrt{n}\log n)}\choose{d-1}}\left(1+O\left(\frac{\log (n)}{ n^{d-1}}\right)\right)\frac{aU_T^+ + bU_T^-}{{n\choose d-1}} \notag\\
	&=\left(\frac{1}{2^{d-1}}+O\left(\frac{\log(n)}{\sqrt{n}}\right)\right)(aU_{T}^+ + bU_T^-).\label{CondE1}
\end{align}
Similarly, for $1\leq r\leq d-2$.
\begin{align*}
\mathbb E[U_{T+1}^{(0)}|\mathcal F_T,\mathcal A_T]&=\left(\frac{1}{2^{d-1}}+O\left(\frac{\log(n)}{\sqrt n}\right)\right)(bU_{T}^+ + aU_T^-),
\\
\mathbb E[U_{T+1}^{(r)}|\mathcal F_T,\mathcal A_T]&=\left(\frac{1}{2^{d-1}}+O\left(\frac{\log(n)}{\sqrt{n}}\right)\right){{d-1}\choose{r}}(bU_{T}^+ + bU_T^-).
\end{align*}
 Therefore from the estimations above, with the definition of $\alpha,\beta$ from \eqref{alphabeta},
\begin{align}
\mathbb E[\sum_{r=0}^{d-1}rU_{T+1}^{(r)}|\mathcal F_T,\mathcal A_T]
=&\left(1+O\left(\frac{\log(n)}{\sqrt n}\right)\right)\frac{1}{2^{d-1}}\left( (d-1)(aU_T^+ +bU_T^-)+\sum_{r=1}^{d-2} r{d-1\choose r}b( U_T^{+} + U_T^-)\right)	\notag\\
=&\left(1+O\left(\frac{\log(n)}{\sqrt n}\right)\right)\left(\frac{\alpha+\beta}{2}U_{T}^{+}+\frac{\alpha-\beta}{2}U_{T}^{-}\right).       \label{ExpConcentrate}
\end{align}

Since we have shown $ \sum_{r=0}^{d-1}U_{T+1}^{(r)}$ concentrated around its mean by $  \frac{\epsilon}{2d}$ with probability at least $1-O(n^{-2-\gamma})$,  conditioned on $\mathcal A_T$, we obtain
\begin{align}
\left|\sum_{r=0}^{d-1}rU_{T+1}^{(r)}-	\mathbb E[\sum_{r=0}^{d-1}rU_{T+1}^{(r)}|\mathcal F_T]\right| 
\leq &\sum_{r=0}^{d-1}r \left| U_{T+1}^{(r)}-\mathbb E[U_{T+1}^{(r)}\mid \mathcal F_T]\right|\notag  \leq \frac{\epsilon}{2d}\sum_{r=1}^{d-1}r\mathbb E[U_{T+1}^{(r)}\mid \mathcal F_T]  \notag\\
\leq &\frac{\epsilon}{4}\left(1+O\left(\frac{\log(n)}{\sqrt n}\right)\right)\left(\frac{\alpha+\beta}{2}U_{T}^{+}+\frac{\alpha-\beta}{2}U_{T}^{-}\right)\label{ConcentrationaroundMean}
\end{align} with probability   $1-O(n^{-2-\gamma})$. Therefore from \eqref{ExpConcentrate}, conditioned on $\mathcal A_T$, for large $n$, with probability $1-O( n^{-2-\gamma})$,
\begin{align}\label{330Concentration}
\sum_{r=0}^{d-1}rU_{T+1}^{(r)}\in \left[1-\frac{\epsilon}{3}, 1+\frac{\epsilon}{3} \right]\left(\frac{\alpha+\beta}{2}U_{T}^{+}+\frac{\alpha-\beta}{2}U_{T}^{-}\right).\end{align}

From \eqref{upperUT}, \eqref{UTstar} and \eqref{330Concentration}, conditioned on $\mathcal A_T$ and $\mathcal F_T$,  with probability  $1-O(n^{-2-\gamma})$,
\begin{align*}
	 U_{T+1}^+&\leq \sum_{r=0}^{d-1}rU_{T+1}^{(r)}+(d-2)U_{T+1}^{*}\leq \sum_{r=0}^{d-1}rU_{T+1}^{(r)}+(d-2)\frac{(\alpha-\beta)\epsilon S_T}{4d}\\
	 & \leq (1+\epsilon)\left(\frac{\alpha+\beta}{2}U_{T}^{+}+\frac{\alpha-\beta}{2}U_{T}^{-}\right).
\end{align*}

 Since $\mathbb P(\mathcal A_T)=1-n^{-2-\gamma}$, and by symmetry of $\pm$ labels,  with probability  $1-O(n^{-2-\gamma})$,
\begin{align}\label{UBDAT}
  U_{T+1}^{\pm}\leq  (1+\epsilon)\left(\frac{\alpha+\beta}{2}U_{T}^{\pm }+\frac{\alpha-\beta}{2}U_{T}^{\pm}\right).\end{align}
 
\textbf{(ii) Lower bound.}
To show \eqref{concentratecor1}, \eqref{concentratecor2} for $t'=T+1,t=T$, we cannot directly bound $U_{T+1}^{\pm}$ from below by $U_{T+1}^{(r)}, 1\leq r\leq d-1$ since from our definition of the connected $(d-1)$-subsets, they  can overlap with each other, which leads to over-counting of the number vertices with $\pm$ labels. 
In the following we show the overlaps between different connected $(d-1)$-sets are small, which gives us the desired lower bound. 

Let $W_{t+1,i}^{\pm}$ be the set of vertices in $ V_{>t}$ with  spin $\pm $ and appear in at least $i$ distinct connected $(d-1)$-subsets in $ V_{>t}$ for $i\geq 1$. Let $W_{t+1,i}=W_{t+1,i}^+\cup W_{t+1,i}^-$. From our definition, $W_{T+1,1}^+$ are the vertices with spin $+$ that appear in at least one connected $(d-1)$-subsets, so  $|W_{T+1,1}^+|\leq U_{T+1}^+$. By counting the multiplicity of  vertices with spin $+$, we have the following relation
\begin{align}\label{Wexpansion}
	\sum_{r=1}^{d-1}rU_{T+1}^{(r)}=|W_{T+1,1}^+|+\sum_{i\geq 2} |W_{T+1,i}^+|\leq U_{T+1}^+ +\sum_{i\geq 2}  |W_{T+1,i}|.
\end{align}
This implies a lower bound on $U_{T+1}^+$:
\begin{align}\label{LowerboundU}
	U_{T+1}^+\geq \sum_{r=1}^{d-1}rU_{T+1}^{(r)}-\sum_{i\geq 2}  |W_{T+1,i}|.
\end{align}

Next we control $|W_{T+1,2}|$. Let $m=n-|V_{\leq T}|$. We enumerate all vertices in $V_{>T}$ from $1$ to $m$ temporarily for the proof of the lower bound. Let $X_i, 1\leq i\leq m$ be  the random variables that $X_i=1$ if $i\in W_{T+1,2}$ and $0$ otherwise, we then have $|W_{T+1,2}|=\sum_{i=1}^m X_i$. A simple calculation yields
\begin{align}\label{PossionMoment}
|W_{T+1,2}|^2-|W_{T+1,2}|=\left(\sum_{i=1}^m X_i\right)^2-\sum_{i=1}^m X_i=2\sum_{1\leq i<j\leq m}X_iX_j. 	
\end{align}
The product $X_{i}X_{j}$ is $1$ if $i,j\in W_{T+1,2}$ and $0$ otherwise. 

We further consider 3 events, $E_{ij}^{s}$ for $s=0,1,2$, where $E_{ij}^{0}$ is the event that all $(d-1)$-subsets in $V_{>T}$ containing $i,j$ are not connected to $V_{T}$,  $E_{ij}^{1}$ is the event that there is only one $(d-1)$-subset in $V_{>T}$ containing $i,j$ connected to $V_{T}$ and $E_{ij}^{2}$ is the event that there are at least two $(d-1)$-subsets in $V_{>T}$ containing $i,j$ connected to $V_{T}$. Now we have 
\begin{align}\label{Eventij}
\mathbb E[X_iX_j \mid \mathcal F_T,\mathcal A_T]&=\mathbb P\left(i,j\in W_{T+1,2} \mid \mathcal F_T,\mathcal A_T\right) \notag\\
&=	\sum_{r=0}^2\mathbb P\left(i,j\in W_{T+1,2} \mid E_{ij}^{r},\mathcal F_T,\mathcal A_T\right)\mathbb P(E_{ij}^{r}\mid \mathcal F_T,\mathcal A_T). 
\end{align}
We estimate the three terms in the sum separately. Conditioned on $E_{ij}^{0}$, $\mathcal F_T$, and $\mathcal A_T$, the two events that $i\in W_{T+1,2}$ and $j\in W_{T+1,2}$ are independent. And the probability that $i\in W_{T+1,2}$ is bounded by 
$${n\choose d-2}^2 \left(\frac{a\vee b}{{n\choose d-1}}\right)^2 S_T^2\leq \frac{C_1\log^2(n)}{n^2}$$
for some constant $C_1>0$. So we have
\begin{align}
&\mathbb P\left(i,j\in W_{T+1,2} \mid E_{ij}^{0},\mathcal F_T,\mathcal A_T\right)\mathbb P(E_{ij}^{0}\mid \mathcal F_T,\mathcal A_T)
\leq 	\mathbb P\left(i,j\in W_{T+1,2} \mid E_{ij}^{0},\mathcal F_T,\mathcal A_T\right)\notag\\
=&\mathbb P\left(i\in W_{T+1,2} \mid E_{ij}^{0},\mathcal F_T,\mathcal A_T\right)\mathbb P\left(j\in W_{T+1,2} \mid E_{ij}^{0},\mathcal F_T,\mathcal A_T\right)\leq \frac{C_1^2\log^4 n}{n^4}.\label{Pterm1}
\end{align}

For the term that involves $E_{ij}^1$, we know for some $C_2>0$,
$$\mathbb P(E_{ij}^1 \mid \mathcal F_T,\mathcal A_T)\leq {n\choose d-3}\frac{a\vee b}{{n\choose d-1}}S_T\leq \frac{C_2\log n}{n^2},
$$
and conditioned on $E_{ij}^{1}$ and $\mathcal F_T,\mathcal A_T$, the two events that $i\in W_{T+1,2}$ and $j\in W_{T+1,2}$ are independent again, since we require $i,j$ to be contained in  at least $2$ connected-subsets. We have 
$$\mathbb P\left(i\in W_{T+1,2} \mid E_{ij}^{1},\mathcal F_T,\mathcal A_T\right)\leq {{n\choose d-2}}S_T\frac{a\vee b}{{n\choose d-1}}\leq \frac{C_2\log n}{n}.
$$ Therefore we have 
\begin{align}
&\mathbb P\left(i,j\in W_{T+1,2} \mid E_{ij}^{1},\mathcal F_T,\mathcal A_T\right)\mathbb P(E_{ij}^{1}\mid \mathcal F_T,\mathcal A_T)\notag\\
= &\mathbb P\left(i\in W_{T+1,2} \mid E_{ij}^{1},\mathcal F_T,\mathcal A_T\right)\mathbb P\left(j\in W_{T+1,2} \mid E_{ij}^{1},\mathcal F_T,\mathcal A_T\right)\mathbb P(E_{ij}^{1}\mid \mathcal F_T,\mathcal A_T)\notag\\
\leq &\frac{C_2^2\log^2 n}{n^2}\cdot  \frac{C_2\log n}{n^2}=\frac{C_2^3\log^3 n}{n^4}.\label{Pterm2}
\end{align}

Conditioned on $E_{ij}^{2}$, $i,j$ have already been included in 2 connected $(d-1)$ subsets, so $$\mathbb P\left(i,j\in W_{T+1,2} \mid E_{ij}^{2},\mathcal F_T,\mathcal A_T\right)=1.$$ We then have for some $C_3>0$,
\begin{align}\label{Pterm3}
&\mathbb P\left(i,j\in W_{T+1,2} \mid E_{ij}^{2},\mathcal F_T,\mathcal A_T\right)\mathbb P(E_{ij}^{2}\mid \mathcal F_T,\mathcal A_T)\notag\\
=&\mathbb P(E_{ij}^{2}\mid \mathcal F_T,\mathcal A_T)\leq {n\choose d-3}^2S_T^2\left(\frac{a\vee b}{{n\choose d-1}}\right)^2\leq \frac{C_3\log^2 n}{n^4}.
\end{align}
Combining \eqref{Pterm1}-\eqref{Pterm3}, we have for some constant $C'>0$,
\begin{align}
\mathbb E[X_iX_j \mid \mathcal F_T,\mathcal A_T]\leq \frac{C'\log^4 n}{n^4}.	
\end{align}  Taking conditional expectation in  \eqref{PossionMoment}, we have  
\begin{align*}
\mathbb E\left[|W_{T+1,2}|^2-|W_{T+1,2}|\mid \mathcal F_T,\mathcal A_T\right]=2\sum_{1\leq i<j\leq m}\mathbb E[X_iX_j\mid \mathcal F_T,\mathcal A_T]\leq \frac{C'\log^4 n}{n^2}.
\end{align*}

By Markov's inequality, there exists a constant $C>0$ such that for any constant $\lambda>0$ and sufficiently large $n$, 
\begin{align}\label{eq:WTTT}
&\mathbb P\left( |W_{T+1,2}|>\lambda S_T \mid \mathcal F_T,\mathcal A_T\right)
 \leq  	\mathbb P\left( |W_{T+1,2}|(|W_{T+1,2}|-1)>\lambda S_T(\lambda S_T-1) \mid \mathcal F_T,\mathcal A_T\right)\\
\leq & \frac{ \mathbb E[|W_{T+1,2}|(|W_{T+1,2}|-1)\mid \mathcal F_T,\mathcal A_T]}{\lambda S_T (\lambda S_T-1)}\leq \frac{C\log ^2 n}{\lambda^2 n^2}, \notag 
\end{align}
where in the last inequality we use the fact that $S_T\geq K\log n$. Taking $\lambda=\frac{(\alpha-\beta)\epsilon}{4}$, we have  for all large $n$ and for any $\gamma\in (0,1)$,
\begin{align}\mathbb P\left( |W_{T+1,2}|>\frac{(\alpha-\beta)\epsilon}{4} S_T \mid \mathcal F_T,\mathcal A_T\right)=O\left(\frac{\log^2 n}{n^2}\right)\leq n^{-1-\gamma}.\label{ProbW2}
\end{align}  

For a fixed vertex $j\in V_{>T}$, the probability that $j\in W_{T+1,i}$ is at most ${n\choose d-2}^iS_T^i \left(\frac{a\vee b}{{n\choose d-1}}\right)^i,$
then  we have for sufficiently large $n$,
\begin{align*}
\mathbb E[|W_{T+1,i}| \mid \mathcal F_T,\mathcal A_T]\leq n {n\choose d-2}^i	S_T^i\left(\frac{a\vee b}{{n\choose d-1}}\right)^i\leq n\left(\frac{C_4\log n}{n}\right)^i
\end{align*}
for some $C_4>0$. For the rest of the terms in \eqref{Wexpansion}, we have for some constant $C>0$,
\begin{align*}
\mathbb E\left[\sum_{i\geq 3}  |W_{T+1,i}|\mathrel{\Big|} \mathcal F_T,\mathcal A_T\right]\leq n\sum_{i=3}^{\infty} \left(\frac{C_4\log n}{n}\right)^i\leq \frac{C\log^3(n)}{n^2}.	
\end{align*}
By Markov's inequality,
\begin{align*}
\mathbb P\left(\sum_{i\geq 3} |W_{T+1,i}|\geq \frac{(\alpha-\beta)\epsilon}{4} S_T \mid \mathcal F_T,\mathcal A_T\right)\leq \frac{C\log^2 (n)}{n^2}\leq n^{-1-\gamma}.	
\end{align*}
Together with \eqref{ProbW2},  we have conditioned on $\mathcal A_T$,
$
	\sum_{i\geq 2} |W_{T+1,2}^+|\leq \frac{(\alpha-\beta)\epsilon}{2}S_T
$
with probability at least $1-2n^{-1-\gamma}$ for any $\gamma\in (0,1)$ and all large $n$.  Note that
$$\frac{(\alpha-\beta)\epsilon}{2}S_T\leq \frac{\epsilon}{2}\left(\frac{\alpha+\beta}{2}U_{T}^{+}+\frac{\alpha-\beta}{2}U_{T}^{-}\right).$$
With  \eqref{330Concentration}, \eqref{LowerboundU}, and \eqref{eq:PAT}, we have 
\begin{align*}
U_{T+1}^+&\geq \sum_{r=1}^{d-1}rU_{T+1}^{(r)}-\frac{\epsilon}{2}\left(\frac{\alpha+\beta}{2}U_{T}^{+}+\frac{\alpha-\beta}{2}U_{T}^{-}\right)\geq (1-\epsilon)\left(\frac{\alpha+\beta}{2}U_{T}^{+}+\frac{\alpha-\beta}{2}U_{T}^{-}\right)
\end{align*} with probability $1-O(n^{-1-\gamma})$.
 By symmetry, the argument works for $U_{T+1}^-$, therefore with probability $1-O(n^{-1-\gamma})$ for any $\gamma\in (0,1)$, we have 
\begin{align}\label{lowerU}
U_{T+1}^{\pm}\geq (1-\epsilon)\left(\frac{\alpha+\beta}{2}U_{T}^{\pm }+\frac{\alpha-\beta}{2}U_{T}^{\mp }\right).
\end{align} 

From  \eqref{UBDAT} and \eqref{lowerU},  we have with probability $1-O(n^{-1-\gamma})$ for any $\gamma\in (0,1)$, \eqref{eq:818} holds. 

\textbf{Step 2: Induction.}
It remains to extend this estimate in Step 1 for all $T\leq t'<t\leq l$. We now define the event 
\begin{align}\label{eventAt}
\mathcal {A}_t:=\left\{ U_t^{\pm}\in [1-\epsilon_{t-1},1+\epsilon_{t-1}]\left(\frac{\alpha+\beta}{2}U_{t-1}^{\pm }+\frac{\alpha-\beta}{2}U_{t-1}^{\pm}\right)\right\}	
\end{align} for $T+1\leq t\leq l$,
and recall  $  \epsilon_t=\epsilon\alpha^{-(t-T)/2},\mathcal A_T=\{S_T\leq K_3\log n\}$. 

From the proof above, we have shown  $\mathcal A_{T+1}$ holds with probability $1-O(n^{-1-\gamma})$. Conditioned on $\mathcal A_T$, $\mathcal A_{T+1},\cdots, \mathcal A_t$ for some fix $t$ with $T+2\leq t\leq l$, the vector $\vec{U}_t=(U_t^+,U_t^-)$ satisfies \eqref{concentratecor1}, \eqref{concentratecor2} for any $T\leq t'<t$. 

Set $t'=T+1$. From \cite{massoulie2014community}, for any integer $k>0$,
$M^k=\frac{1}{2}\begin{bmatrix}
			\alpha^k+\beta^k &\alpha^k-\beta^k\\
			\alpha^k-\beta^k &\alpha^k+\beta^k
		\end{bmatrix}
		$.
\eqref{concentratecor1} implies that 
\begin{align}U_t^{\pm} &\geq \left(\prod_{s=T+1}^{t-1}(1-\epsilon_s)\right) \left(\frac{\alpha^{t-T-1}+\beta^{t-T-1}}{2}U_{T+1}^{\pm}+\frac{\alpha^{t-T-1}-\beta^{t-T-1}}{2}U_{T+1}^{\mp }\right) \notag\\
& \geq (1-O(\epsilon))\frac{\alpha^{t-T-1}}{2}(1-\epsilon)\left(\frac{\alpha+\beta}{2}U_T^{\pm}+\frac{\alpha-\beta}{2}U_T^{\mp}\right)\notag\\
&\geq (1-O(\epsilon))\alpha^{t-T} \frac{(1-\epsilon)(\alpha-\beta)}{4\alpha}S_T
\geq C_1\alpha^{t-T}\log(n), \label{LowerBoundtT}
\end{align} for some constant $C_1>0$. 
For any $t$ with $T\leq t$, conditioned on $\mathcal A_T$, $\mathcal A_{T+1},\cdots, \mathcal A_t$, since $\beta<\alpha$,
\begin{align}
U_{t}^{\pm}\leq	&\left(\prod_{s=T}^{t-1}(1+\epsilon_s)\right) \left(\frac{\alpha^{t-T}+\beta^{t-T}}{2}U_{T}^{\pm}+\frac{\alpha^{t-T}-\beta^{t-T}}{2}U_{T}^{\mp }\right)\notag\\
 &\leq (1+O(\epsilon))\frac{\alpha^{t-T}+\beta^{t-T}}{2}S_T \leq  (1+O(\epsilon))\alpha^{t-T}K_3\log(n)\leq C_2\alpha^{t-T}\log n\label{UpperboundT}
\end{align}
for some $C_2>0$. 
Combining lower and upper bounds on $U_t^{\pm}$, we obtain 
\begin{align}\label{eq:ThetaSt}
S_t=U_t^+ +U_t^-=\Theta(\alpha^{t-T}\log n).	
\end{align}
  We now show by induction that $\mathcal A_{t+1}$ holds with high enough probability conditioned on $\{\mathcal A_j, T\leq j\leq t\}$. 

\textbf{(i) Upper bound.}
Note that $\alpha^l=o(n^{1/4})$, for some constant $C>0$ 
$$U_{\leq t}^+\leq \sum_{i=1}^t S_i\leq C\alpha^{t-T}\log^2 n\leq C\alpha^{l}\log n=o(n^{1/4}\log n).$$ 
Recall $|n^{\pm}-\frac{n}{2}|\leq \sqrt{n}\log n$.
From \eqref{mean1}-\eqref{mean3}, similar to the case for $t=T$, we have
\begin{align*}
&\mathbb E[U_{t+1}^{(d-1)}|\cap_{j=T}^{t}\mathcal A_j,\mathcal F_t]
= {{n^+-U_{\leq t}^+}\choose{d-1}}\left( 1-\left(1-\frac{a}{{{n}\choose{d-1}}}\right)^{U_t^+}\left(1-\frac{b}{{{n}\choose{d-1}}}\right)^{U_t^-}\right)   \notag 	\\ =&\left(\frac{1}{2^{d-1}}+O\left(\frac{\log n}{\sqrt n}\right)\right)(aU_{t}^+ + bU_t^-),\notag 
\end{align*}
and
\begin{align*}
\mathbb E[U_{t+1}^{(0)}|\cap_{j=T}^{t}\mathcal A_j, \mathcal F_t]&=(\frac{1}{2^{d-1}}+O(\frac{\log n}{\sqrt n}))(bU_{t}^+ + aU_t^-),\\
\mathbb E[U_{t+1}^{(r)}|\cap_{j=T}^{t}\mathcal A_j,\mathcal F_t]&=(\frac{1}{2^{d-1}}+O(\frac{\log n}{\sqrt n})){{d-1}\choose{r}}(bU_{t}^+ + bU_t^-), 
\end{align*} for $1\leq r\leq d-2.$
Hence there exists a constant $C_0>0$ such that for all $0\leq r\leq d-1$, \[  \mathbb E[U_{t+1}^{(r)}|\cap_{j=T}^{t}\mathcal A_j,\mathcal F_t]\geq C_0S_t.\]

From \eqref{twosidedbound} in Lemma \ref{ChernoffBound}, for any $0\leq r\leq d-1$, to show 
\begin{align}\label{conditionalChernoff}
&\mathbb P\left(	\left|U_{t+1}^{(r)}-\mathbb E[U_{t+1}^{(r)}\mid \cap_{j=T}^{t}\mathcal A_j,\mathcal F_{t}]\right|\leq \frac{\epsilon}{2d}\mathbb E[U_{t+1}^{(r)} \mid \cap_{j=T}^{t}\mathcal A_j,\mathcal F_{t}]  \mathrel{\Big|} \cap_{j=T}^{t}\mathcal A_j,\mathcal F_{t}\right)
\geq  1-n^{-2-\gamma},
\end{align}
it suffices to have
\begin{align}\label{InductionBound1}
C_0S_t\tilde{h}\left(\frac{\epsilon_t}{2d}\right)\geq (2+\gamma)\log n.\end{align}

From \eqref{twosidedbound}, by a second-order expansion of $\tilde{h}$ around $0$, $\tilde{h}(x)\geq x^2/3$ when $x>0$ is small. For $\gamma\in (0,1)$, the left hand side in \eqref{InductionBound1} is lower bounded by 
\begin{align*}
C_1K\alpha^{t-T}\log (n) \tilde{h}\left(\frac{\epsilon_t}{2d}\right)
\geq & C_2\alpha^{t-T}K\log (n)\epsilon_t^2= C_2K\log n\geq (2+\gamma)\log n,
\end{align*}
by taking $K$ large enough. Therefore \eqref{conditionalChernoff} holds.

 We also have
\begin{align*}
	U_{t+1,s}&\preceq Z_{t+1,s}, \quad Z_{t+1,s}\sim  \textnormal{Bin}\left({{n}\choose{s}},\frac{a\vee b}{{{n}\choose{d-1}}}{{S_{t}}\choose{d-s}}\right),
\end{align*} 
and  $Z_{t+1,s}$ has mean $  {n\choose s}	\frac{a\vee b}{{{n}\choose{d-1}}}{{S_t}\choose{d-s}}=\Theta\left(\frac{\alpha^{(d-s)(t-T)}\log^{d-s}(n)}{n^{d-1-s}}\right).$  
For $1\leq s\leq d-2$, using the fact that $  h(x)\geq \frac{1}{2}x\log(x)$ for $x$ large enough, similar to  \eqref{chernoffonesided}, there are constants $C_1,C_2,C_3, C_4>0$ such that for any $\lambda>0$,
\begin{align*}
&\mathbb P(U_{t+1,s}\geq \lambda  S_t \mid \mathcal \cap_{j=T}^t\mathcal A_j, \mathcal F_t)
\leq   \mathbb P(Z_{t+1,s}\geq \lambda  S_t \mid\mathcal \cap_{j=T}^t\mathcal A_j, \mathcal F_t)\\
\leq &\exp\left(-C_1\lambda \alpha^{t-T}\log(n)\log\left( \frac{C_2\lambda \alpha^{t-T}\log(n)}{C_3\alpha^{(d-s)(t-T)}\log^{d-s}(n)n^{1+s-d}}\right) \right).
\end{align*}
Taking $\lambda=\frac{(\alpha-\beta)\epsilon_t}{4d^2}=\frac{(\alpha-\beta)\epsilon \alpha^{-(t-T)/2}}{4d^2},$ 
we have 
\begin{align*}
&\mathbb P\left(U_{t+1,s}\geq \frac{(\alpha-\beta)\epsilon_t}{4d^2}  S_t \mid \mathcal \cap_{j=T}^t\mathcal A_j, \mathcal F_t\right)\\
\leq & \exp\left(-C_1'\alpha^{(t-T)/2}\log (n)\cdot\log (C_2'\alpha^{(s-d+\frac{1}{2})(t-T)}\log^{1+s-d}(n)n^{d-1-s})\right).	
\end{align*}
Since for some constants $C_4,C_5,C_6>0$,
\begin{align*}&\log (C_2'\alpha^{(s-d+\frac{1}{2})(t-T)}\log^{1+s-d}(n)n^{d-1-s}) \\
\geq &C_4-C_5(t-T)\log (\alpha)+\log(\log^{1+s-d}(n))+(d-1-s)\log n
\geq C_6\log n,
\end{align*}
we have for all $1\leq s\leq d-2$,
\begin{align}\label{CONCENTRATEMEAN}
\mathbb P(U_{t+1,s}\geq \frac{(\alpha-\beta)\epsilon_t}{4d^2}  S_t \mid \mathcal \cap_{j=T}^t\mathcal A_j, \mathcal F_t)\leq \exp\left (-C_1'C_6\log^2 n\right)\leq n^{-2-\gamma}
\end{align}
for any $\gamma\in (0,1)$.  Recall for sufficiently large $n$, 
$$\epsilon_t=\epsilon\alpha^{-(t-T)/2}\geq \epsilon \alpha^{-l/2}>n^{-1/8}.$$ 
Therefore $\frac{\log n}{\sqrt n}=o(\epsilon_t)$. From  \eqref{CONCENTRATEMEAN}, conditioned on $\mathcal A_T,\dots, A_t$ and $\mathcal F_t$,
 $$U_{t+1}^{+}\leq \sum_{r=1}^{d-1}rU_{t+1}^{(r)}+(d-2)U_{t+1}^{*}\leq (1+\epsilon_t)\left(\frac{\alpha+\beta }{2}U_{t}^{+ }+\frac{\alpha-\beta}{2}U_{t}^{-}\right)$$
  with probability at least $1-O(n^{-2-\gamma})$. A similar bound works for $U_{t+1}^-$, which implies conditioned on $\mathcal A_{T},\dots, A_{t}$,
   \begin{align}\label{eq:PUUT}
   U_{t+1}^{\pm }\leq (1+\epsilon_t)\left(\frac{\alpha+\beta }{2}U_{t}^{\pm }+\frac{\alpha-\beta}{2}U_{t}^{\pm}\right)
   \end{align}
 with probability $1-O(n^{-2-\gamma})$ for any $\gamma\in (0,1)$.

 \textbf{(ii) Lower bound.} We need to show that conditioned on $\mathcal A_{T},\dots, \mathcal A_t$, 
 $U_{t+1}^{\pm}\geq  (1-\epsilon_t)\left(\frac{\alpha+\beta}{2}U_{t}^{\pm }+\frac{\alpha-\beta}{2}U_{t}^{\pm}\right)$
 	  with probability $1-O(n^{-1-\gamma})$ for some $\gamma \in (0,1)$.	This part of the proof is very similar to the case for $t=T$. 
 	  Same as \eqref{LowerboundU}, we have the following  lower bound on $U_{t+1}^+$:
	\[  U_{t+1}^+\geq \sum_{r=1}^{d-1}rU_{t+1}^{(r)}-\sum_{i\geq 2}|W_{t+1,i}|.\]
	
 Next we control $|W_{t+1,2}|$. Let $m=n-|V_{\leq t}|$ and we enumerate all vertices in $V_{>t}$ from $1$ to $m$. Let $X_{1},\dots X_m$ be the random variable that $X_i=1$ if $i\in W_{t+1,2}$ and $0$ otherwise. Same as  \eqref{PossionMoment},
\begin{align}\label{PossionMoment2}
|W_{t+1,2}|^2-|W_{t+1,2}|=2\sum_{1\leq i<j\leq m}X_iX_j.	
\end{align}
Let $E_{ij}^{s}$ for $s=0,1,2$, be the similar events as in \eqref{Eventij} before, now we have 
\begin{align*}
&\mathbb E[X_iX_j \mid \cap_{j=T}^t\mathcal A_j,\mathcal F_t]=\mathbb P\left(i,j\in W_{t+1,2} \mid \cap_{j=T}^t\mathcal A_j,\mathcal F_t\right)\\
=&	\sum_{r=0}^2\mathbb P\left(i,j\in W_{t+1,2} \mid E_{ij}^{r},\cap_{j=T}^t\mathcal A_j,\mathcal F_t\right)\mathbb P(E_{ij}^{r}\mid \cap_{j=T}^t\mathcal A_j,\mathcal F_t).
\end{align*}
The three terms in the sum can be estimated separately in the same way as before. By using the upper bound $C\alpha^{t-T}\log n\leq S_t\leq C_0\alpha^{t-T}\log n$ for some $C,C_0>0$, and use the same argument for the case when $t=T$, we have the following three inequalities for some constants $C_1,C_2,C_3>0$:
\begin{align*}
&\mathbb P\left(i,j\in W_{t+1,2} \mid E_{ij}^{0},\mathcal F_t\right)\mathbb P(E_{ij}^{0}\mid \cap_{j=T}^t\mathcal A_j,\mathcal F_t)\leq	\frac{C_1^2\alpha^{4(t-T)}\log^4 n}{n^4},\\
&\mathbb P\left(i,j\in W_{t+1,2} \mid E_{ij}^{1},\mathcal F_t\right)\mathbb P(E_{ij}^{1}\mid \cap_{j=T}^t\mathcal A_j,\mathcal F_t) \leq \frac{C_2^3\alpha^{3(t-T)}\log^3 n}{n^4},\\
&\mathbb P\left(i,j\in W_{t+1,2} \mid E_{ij}^{2},\mathcal F_t\right)\mathbb P(E_{ij}^{2}\mid \cap_{j=T}^t\mathcal A_j,\mathcal F_t) \leq \frac{C_3\alpha^{2(t-T)}\log^2 n}{n^4}.
\end{align*}
This implies
$\mathbb E[X_iX_j \mid \cap_{j=T}^t\mathcal A_j,\mathcal F_t]\leq \frac{C'\alpha^{4(t-T)}\log^4 n}{n^4}	$ for some  $C'>0$. Taking conditional expectation in  \eqref{PossionMoment2}, we have  
\begin{align*}
\mathbb E\left[|W_{t+1,2}|^2-|W_{t+1,2}|\mid \cap_{j=T}^t\mathcal A_j,\mathcal F_t\right]\leq \frac{C'\alpha^{4(t-T)}\log^4 n}{n^2}.
\end{align*}
Then by Markov inequality and \eqref{eq:ThetaSt}, similar to \eqref{eq:WTTT},  there exists a constant $C>0$ such that for any $\lambda=\Omega (\alpha^{-(t-T)})$, 
\begin{align*}
\mathbb P\left( |W_{t+1,2}|>\lambda S_t \mid \cap_{j=T}^t\mathcal A_j, \mathcal F_t\right)& \leq \frac{C\alpha^{2(t-T)}\log ^2 n}{\lambda^2 n^2}.
\end{align*}
Take $\lambda=\frac{(\alpha-\beta)\epsilon_t}{4}$. Since $c\log(\alpha)<1/4$, we have $\alpha^{l}<n^{1/4}$, and \begin{align*}\mathbb P\left( |W_{t+1,2}|>\frac{(\alpha-\beta)\epsilon_t}{4} S_t \mid \cap_{j=T}^t\mathcal A_j,\mathcal F_t\right)\leq \frac{C\alpha^{2(t-T)}\log^2 n}{n^2}\leq n^{-1-\gamma}
\end{align*} for any $\gamma\in (0,1/2)$.

For each $|W_{t+1,i}|$ for $i\geq 3$, we have for sufficiently large $n$, there exists  a constant $C_4>0$
\begin{align*}
\mathbb E[|W_{t+1,i}| \mid \cap_{j=T}^t\mathcal A_j,\mathcal F_t]\leq n {n\choose d-2}^i	S_t^i\left(\frac{a\vee b}{{n\choose d-1}}\right)^i\leq n\left(\frac{C_4\alpha^{t-T}\log n}{n}\right)^i.
\end{align*}
For the rest of the terms, we have for some constant $C_4'>0$,
\begin{align*}
\mathbb E\left[\sum_{i\geq 3}  |W_i|\mid \cap_{j=T}^t\mathcal A_j, \mathcal F_t\right]\leq n\sum_{i=3}^{\infty} \left(\frac{C_4\alpha^{t-T}\log n}{n}\right)^i\leq \frac{C_4'\alpha^{3(t-T)}\log^3(n)}{n^2}.	
\end{align*}
By Markov's inequality,  
\begin{align*}
\mathbb P\left(\sum_{i\geq 3} |W_i|\geq \frac{(\alpha-\beta)\epsilon_t}{4} S_t \mid \cap_{j=T}^t\mathcal A_j, \mathcal F_t\right)\leq \frac{C_5\alpha^{2.5(t-T)}\log^2 (n)}{n^2}\leq n^{-1-\gamma}	
\end{align*}
for any $\gamma\in (0,3/8)$.
Together with the estimate on $W_{t+1,2}$,  we have 
\begin{align*}
	\sum_{i\geq 2} |W_{t+1,2}^+|\leq \frac{(\alpha-\beta)\epsilon_t}{2}S_t\leq \frac{\epsilon_t}{2}\left(\frac{\alpha+\beta}{2}U_{t}^{+}+\frac{\alpha-\beta}{2}U_{t}^{-}\right)
\end{align*}
with probability $1-2n^{-1-\gamma}$ for any $\gamma\in (0,3/8)$.

With \eqref{LowerboundU} and \eqref{330Concentration}, 
$
U_{t+1}^+\geq (1-\epsilon_t)\left(\frac{\alpha+\beta}{2}U_{t}^{+}+\frac{\alpha-\beta}{2}U_{t}^{-}\right)
$ with probability $1-O(n^{-1-\gamma})$. By symmetry, the argument works for $U_{t+1}^-$. Therefore conditioned on $\mathcal A_{T},\dots, \mathcal A_t$, with probability $1-O(n^{-1-\gamma})$ for any $\gamma\in (0,3/8)$,  
\begin{align}\label{lowerU1}
U_{t+1}^{\pm}\geq (1-\epsilon_t)\left(\frac{\alpha+\beta}{2}U_{t}^{\pm }+\frac{\alpha-\beta}{2}U_{t}^{\mp }\right).
\end{align} 
This finishes the proof the lower bound part of Step 2.

Recall \eqref{eventAt}.
With  \eqref{lowerU1} and \eqref{eq:PUUT},  we have shown that conditioned on $\mathcal A_{T},\dots, \mathcal A_t$, with probability $1-O(n^{-1-\gamma})$, $\mathcal A_{t+1}$ holds.
This finishes the induction step. 
 Finally, for fixed $i\in [n]$ and $\gamma \in (0,3/8)$,
\begin{align*}
\mathbb P\left(\bigcap_{t=T}^l \mathcal A_t\right)&=\mathbb P(\mathcal A_T)\prod_{t=T+1}^l \mathbb P(\mathcal A_t \mid \mathcal A_{t-1},\dots , \mathcal A_T)	 \\
&\geq (1-Cn^{-2-\gamma})(1-Cn^{-1-\gamma})^l\geq 1-C_6\log (n)n^{-1-\gamma},
\end{align*}
for some constant $C_6>0$. Taking a union bound over $i\in [n]$, we have shown $\mathcal A_t$ holds for all $T\leq t\leq l$ and all $i\in [n]$ with probability $1-O(n^{-\gamma})$ for any $\gamma\in (0,3/8)$. This completes the proof of Theorem \ref{growthbound}.
\end{proof}

With Theorem \ref{growthbound}, the  rest of the proof of  Theorem \ref{quasi} follows similarly from the proof of Theorem 2.3 in \cite{massoulie2014community}. We include it for completeness.

\begin{proof}[Proof of Theorem \ref{quasi}] Assume all the estimates in statement of Theorem \ref{growthbound} hold.
For $t\leq l$, if $t\leq T$, from the definition of $T$, we  have $S_t,|D_t|=O(\log n)$. For $t>T$,  from \cite{massoulie2014community}, $M$ satisfies
$$M^k=\frac{1}{2}\begin{bmatrix}
			\alpha^k+\beta^k &\alpha^k-\beta^k\\
			\alpha^k-\beta^k &\alpha^k+\beta^k
		\end{bmatrix}.
		$$
Using \eqref{concentratecor1} and \eqref{concentratecor2},  we have for $t>t'\geq T$,
\begin{align}
S_t&\leq \left(\prod_{s=t'}^{t-1}(1+\epsilon_s)\right)(1,1)M^{t-t'}\vec{U}_{t'}\leq \left(\prod_{s=t'}^{t-1}(1+\epsilon_s)\right)\alpha^{t-t'}S_{t'} \label{STT},\\
S_t&\geq \left(\prod_{s=t'}^{t-1}(1-\epsilon_s)\right)(1,1)M^{t-t'}\vec{U}_{t'}\geq \left(\prod_{s=t'}^{t-1}(1-\epsilon_s)\right)\alpha^{t-t'}S_{t'}.	\label{STT2}
\end{align}
Setting $t'=T$ in \eqref{STT}, we obtain
$$S_t\leq \left(\prod_{s=T}^{t-1}(1+\epsilon_s)\right)\alpha^{t-T}S_{T}=O(\alpha^{t-T}\log n)=O(\alpha^t\log n).
$$
Therefore \eqref{claim1} holds. Let $t=l$ in \eqref{STT} and \eqref{STT2}, we have for all $T\leq t'<l$,
$$ \left(\prod_{s=t'}^{l-1}(1-\epsilon_s)\right)\alpha^{l-t'}S_{t'}\leq S_l\leq \left(\prod_{s=t'}^{l-1}(1+\epsilon_s)\right)\alpha^{l-t'}S_{t'}.
$$
And it implies
\begin{align}\label{boundonst}
\left(\prod_{s=t'}^{l-1}(1-\epsilon_s)\right)S_{t'}\leq \alpha^{t'-l}S_l\leq \left(\prod_{s=t'}^{l-1}(1+\epsilon_s)\right)S_{t'}.
\end{align}
Note that 
$$\max\left\{\prod_{s=t'}^{l-1}(1+\epsilon_s)-1,1-\prod_{s=t'}^{l-1}(1-\epsilon_s)\right\}=O(\epsilon_{t'})=O(\alpha^{-t'/2}).
$$
Together with \eqref{boundonst}, we have for all $T\leq t'<l$, 
\begin{align}\label{boundonST}
 	|S_{t'}-\alpha^{t'-l}S_l|\leq O(\alpha^{-t'/2})S_{t'}=O(\alpha^{t'/2}\log n).
 \end{align}
 
On the other hand, for $t\leq T$, we know $S_t=O(\log n )$. Let $t'=T$ in  \eqref{boundonST}, we have
\begin{align}\label{boundonST2}
|S_T-\alpha^{T-l}S_l|=O(\alpha^{T/2}\log n).
\end{align}
So for $1\leq t\leq T$,
\begin{align}
	|S_t-\alpha^{t-l}S_l|&     =O(\log n)+\alpha^{t-T}(S_T+O(\log(n)\alpha^{T/2}))\notag\\
	&=O(\log n)+ O(\alpha^{t-T/2}\log n )=O(\alpha^{t/2}\log n)\label{smallt}.
\end{align}
The last inequality comes from the inequality $t-T/2\leq t/2$.
Combining  \eqref{boundonST} and \eqref{smallt}, we have proved \eqref{claim3} holds for all $1\leq t\leq l$. 

Using \eqref{concentratecor1} and \eqref{concentratecor2}, we have
\begin{align*}
 D_{t+1}=U_{t+1}^+-U_{t+1}^-&\leq \beta(U_t^+ - U_t^-)+\alpha\epsilon_t(U_t^+ +U_t^-)=\beta D_{t}+\alpha\epsilon_t S_t.
\end{align*}
Similarly,
$
\beta D_{t}	-\alpha\epsilon_t S_t \leq D_{t+1}\leq \beta D_{t}+\alpha\epsilon_t S_t .
$
By iterating, we have for $l\geq t>t'\geq T$,
\begin{align}\label{Dt}
	|D_t-\beta^{t-t'}D_{t'}|\leq \sum_{s=t'}^{t-1}\alpha \beta^{t-1-s}\epsilon_sS_s.
\end{align}
Recall $S_s=O(\log(n)\alpha^{s-T})$,
$|D_T|=O(\log n)$, and $\epsilon_s=\alpha^{-(s-T)/2}$. Taking  $t'=T$ in \eqref{Dt},  for $t>T$,
\begin{align*}
|D_t| =O\left(\log (n)\beta^t\right)+O\left(\sum_{s=T}^{t-1}\alpha\beta^{t-1-s}\log (n)\alpha^{(s-T)/2}	\right).
\end{align*}
Since $1<\alpha<\beta^2$, it follows that
\begin{align*}
	\sum_{s=T}^{t-1}\alpha\beta^{t-1-s}\log (n)\alpha^{(s-T)/2}=&\beta^{t-1}\alpha^{1-T/2}\log (n)\sum_{s=T}^{t-1}\left(\frac{\alpha}{\beta^2}\right)^{s/2}\\
	=&\beta^{t-1}\alpha^{1-T/2}\log (n)O(\alpha^{T/2}\beta^{-T})=O(\log( n)\beta^t).
\end{align*}
So we have $|D_t|=O(\log n\beta^t)$. 
The right side of \eqref{Dt} is of order
$$
\sum_{s=t'}^{t-1}\alpha \beta^{t-1-s}\alpha^{(s-T)/2}\log (n)=O(\log(n)\beta^{t-t'}\alpha^{t'/2}).
$$
Thus setting $t=l$ in \eqref{Dt}, for $l>t'\geq T$, we obtain 
$
D_l-\beta^{l-t'}D_{t'}=O(\log (n)\beta^{l-t'}\alpha^{t'/2}).	
$
Therefore
$D_{t'}=\beta^{t'-l}D_l+O(\log(n)\alpha^{t'/2})
$
holds for all $T\leq t'<l$. 
For $t'<T$, we have $D_{t'}=O(\log n)$ and 
\begin{align*}
	|D_{t'}-\beta^{t'-l}D_l|&\leq O(\log n)+\beta^{t'-T}(|D_T|+O(\log(n)\alpha^{T/2}))\notag\\
	&=O(\log n)+ O(\beta^{t'-T}\alpha^{T/2}\log n )=O(\alpha^{t'/2}\log n),
\end{align*}
where the last estimate is because $\beta^{t'-T}<\alpha^{(t'-T)/2}$ under the condition that $t'<T$. Altogether we have shown \eqref{claim4} holds for all $1\leq t'\leq l$. This completes the proof of Theorem \ref{quasi}.
\end{proof}

 \section{Proof of Theorem \ref{thm:endsection}}\label{sec:given}
 
 We first state the following lemma before proving Theorem \ref{thm:endsection}. The proof is included in Appendix \ref{A6}.
 \begin{lemma}\label{Cor412}
	For all $m\in\{1,\dots, l\}$ with $l=c\log n$, $c\log\alpha<1/4$, it holds asymptotically almost surely that
	\begin{align}
\sup_{\|x\|_2=1,x^{\top}B^{(l)}\mathbf{1}=x^{\top}B^{(l)}\sigma=0}\|\mathbf{1}^{\top}B^{(m-1)}x\|_2&={O}(\sqrt{n}\alpha^{(m-1)/2}\log n)\label{bound1},\\
\sup_{\|x\|_2=1,x^{\top}B^{(l)}\mathbf{1}=x^{\top}B^{(l)}\sigma=0}\|\sigma^{\top}B^{(m-1)}x\|_2&={O}(\sqrt{n}\alpha^{(m-1)/2}\log n)	\label{bound2}.	
	\end{align}
\end{lemma}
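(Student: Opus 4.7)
The plan is to use symmetry of $B^{(m-1)}$ to rewrite $\mathbf{1}^{\top}B^{(m-1)}x = (B^{(m-1)}\mathbf{1})^{\top}x$, then split the sum according to whether vertex $i$ lies in the ``tangled'' set $\mathcal{B}$ from Lemma \ref{TangleCount}. Because the constraint $x^{\top}B^{(l)}\mathbf{1}=0$ is the \emph{only} tool we have to get a $\sqrt{n}\alpha^{(m-1)/2}$ bound (as opposed to the naive $\sqrt{n}\alpha^{m-1}$ one), the heart of the proof is to bring $B^{(l)}\mathbf{1}$ out of $B^{(m-1)}\mathbf{1}$ using the local approximation \eqref{Be}.

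Concretely, on $\mathcal{B}^c$ write $(B^{(m-1)}\mathbf{1})_i = \alpha^{m-1-l}(B^{(l)}\mathbf{1})_i + R_i$ with $|R_i|=O(\alpha^{(m-1)/2}\log n)$, and on $\mathcal{B}$ use the crude bound $|(B^{(m-1)}\mathbf{1})_i|=O(\alpha^{m-1}\log n)$ from \eqref{tanglecount}. Then
\[
(B^{(m-1)}\mathbf{1})^{\top}x = \alpha^{m-1-l}\sum_{i\notin\mathcal{B}}(B^{(l)}\mathbf{1})_i x_i + \sum_{i\notin\mathcal{B}}R_i x_i + \sum_{i\in\mathcal{B}}(B^{(m-1)}\mathbf{1})_i x_i.
\]
In the first sum, the orthogonality $x^{\top}B^{(l)}\mathbf{1}=0$ lets me replace the sum over $\mathcal{B}^c$ by minus the sum over $\mathcal{B}$. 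Then by Cauchy--Schwarz, together with $|\mathcal{B}|\le \log^4(n)\alpha^{2l}$ from Lemma \ref{tangle} and $|(B^{(l)}\mathbf{1})_i|=O(\log n\,\alpha^l)$ from \eqref{tanglecount}, the first sum is $O(\alpha^{m-1-l}\cdot\log^3(n)\alpha^{2l}) = O(\log^3(n)\alpha^{m-1+l})$; using $\alpha^l < n^{1/4}$ this is $o(\sqrt{n}\,\alpha^{(m-1)/2}\log n)$ since $\alpha^{(m-1)/2+l} \le \alpha^{3l/2} < n^{3/8}$. The second sum is bounded by $\|R\|_2$, which is $O(\sqrt{n}\,\alpha^{(m-1)/2}\log n)$ by Cauchy--Schwarz from the coordinatewise $R_i$ estimate. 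The third sum is again $O(\log^3(n)\alpha^{m-1+l})$ by $|\mathcal{B}|\le\log^4(n)\alpha^{2l}$ and the uniform bound on $\mathcal{B}$; this is absorbed into the target for the same reason. Summing the three contributions yields \eqref{bound1}.

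The proof of \eqref{bound2} is identical, using the $\beta$-approximation \eqref{Bs} in place of \eqref{Be} and the constraint $x^{\top}B^{(l)}\sigma=0$. The decomposition reads $(B^{(m-1)}\sigma)_i = \beta^{m-1-l}(B^{(l)}\sigma)_i + R'_i$ on $\mathcal{B}^c$, with $|R'_i|=O(\alpha^{(m-1)/2}\log n)$; the first term is killed on $\mathcal{B}^c$ by orthogonality, its residual contribution on $\mathcal{B}$ is bounded by $\beta^{m-1-l}\sqrt{|\mathcal{B}|}\cdot O(\log n\,\beta^l) = O(\log^3(n)\,\beta^{m-1}\alpha^l)$ (since $|(B^{(l)}\sigma)_i|\le|(B^{(l)}\mathbf{1})_i|$ also holds from \eqref{tanglecount}, but the bound $O(\log n\,\alpha^l)$ on $\mathcal{B}$ works as well and is what I use to keep $\alpha$ factors). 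Using $\beta\le\alpha$ and $\alpha^l<n^{1/4}$ this is again absorbed into $\sqrt{n}\,\alpha^{(m-1)/2}\log n$, and the other two pieces are estimated exactly as before.

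The main obstacle is the first piece: a naive Cauchy--Schwarz on $B^{(m-1)}\mathbf{1}$ gives only $\|B^{(m-1)}\mathbf{1}\|_2=O(\sqrt{n}\,\alpha^{m-1}\log n)$, a factor $\alpha^{(m-1)/2}$ too large. The required $\alpha^{(m-1)/2}$ savings must come from cancelling the leading rank-one-like contribution $\alpha^{m-1-l}B^{(l)}\mathbf{1}$ via the hypothesis $x\perp B^{(l)}\mathbf{1}$; what makes this work is that the ``exceptional'' vertices in $\mathcal{B}$ are few enough (Lemma \ref{tangle}) and the local tangle-free approximation \eqref{Be} is tight enough that neither the boundary term nor the local error inflates past the target scale under the hypothesis $c\log\alpha<1/4$.
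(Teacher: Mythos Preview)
Your proof is correct and follows essentially the same approach as the paper's: both use the symmetry of $B^{(m-1)}$, split according to the tangled set $\mathcal{B}$, invoke the approximation \eqref{Be} (resp.\ \eqref{Bs}) on $\mathcal{B}^c$, kill the leading $\alpha^{m-1-l}B^{(l)}\mathbf{1}$ (resp.\ $\beta^{m-1-l}B^{(l)}\sigma$) term via the orthogonality hypothesis, and bound the residual on $\mathcal{B}$ using $|\mathcal{B}|\le \log^4(n)\alpha^{2l}$ together with \eqref{tanglecount} and Cauchy--Schwarz. The only cosmetic difference is in how the three pieces are grouped (the paper writes the leading term as a full sum over $[n]$ and then subtracts the $\mathcal{B}$-part, whereas you restrict to $\mathcal{B}^c$ from the start and add back the $\mathcal{B}$-part after applying orthogonality), but the estimates and the underlying ideas are identical.
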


\begin{proof}[Proof of Theorem \ref{thm:endsection}]
	Using matrix expansion identity \eqref{matrixexpansion} and the estimates in Theorem \ref{matrixexpansionthm}, for any $l_2$-normalized vector $x$ with $x^{\top}B^{(l)}\mathbf{1}=x^{\top}B^{(l)}\sigma=0$, we have for sufficiently large $n$, asymptotically almost surely
	\begin{align}
		\|B^{(l)}x \|_2 &=\left\|\Delta^{(l)}x+\sum_{m=1}^l (\Delta^{(l-m)}\overline{A}B^{(m-1)})x-\sum_{m=1}^l \Gamma^{(l,m)}x\right\|_2 \notag\\
		&\leq \rho(\Delta^{(l)})+\sum_{m=1}^l\rho(\Delta^{(l-m)})\|\overline{A}B^{(m-1)}x\|_2+\sum_{m=1}^l\rho(\Gamma^{(l,m)}) \notag\\
		&\leq 2n^{\epsilon}\alpha^{l/2}+ \sum_{m=1}^ln^{\epsilon}\alpha^{(l-m)/2}\|\overline{A}B^{(m-1)}x\|_2, \label{expanAB}
	\end{align}
	where $\overline{A}=\mathbb E_{\mathcal H_n}\left[ A\mid \sigma\right]$. We have the following expression for entries of  $\overline{A}$. If $i\not=j$ and $\sigma_i=\sigma_j=+1$,
	\begin{align*}
		\overline{A}_{ij}=\frac{a}{{{n}\choose{d-1}}}{{n^{+}-2}\choose{d-2}}+\frac{b}{{{n}\choose{d-1}}}\left({{n-2}\choose{d-2}}-{{n^{+}-2}\choose{d-2}}\right)=:\tilde{a}_n^+.
	\end{align*}
	If $i\not=j$ and $\sigma_i=\sigma_j=-1$,
	\begin{align*}
		\overline{A}_{ij}=\frac{a}{{{n}\choose{d-1}}}{{n^{-}-2}\choose{d-2}}+\frac{b}{{{n}\choose{d-1}}}\left({{n-2}\choose{d-2}}-{{n^{-}-2}\choose{d-2}}\right)=:\tilde{a}_n^-.
	\end{align*}
	If $\sigma_i\not=\sigma_j$,
	\[ 
		\overline{A}_{ij}=\frac{b}{{{n}\choose{d-1}}}{{n-2}\choose{d-2}}:=\tilde{b}_n.
	\] 
	We then have $\tilde{a}_n^{+},\tilde{a}_n^{-},\tilde{b}_n=O(1/n)$.   Conditioned on the event $\{|n^{\pm}-n/2|\leq \log (n)\sqrt n\}$, we obtain
	$$\tilde{a}_n^--\tilde{a}_n^+=\frac{a-b}{{{n}\choose{d-1}}}\left({{n^{-}-2}\choose{d-2}}-{{n^{+}-2}\choose{d-2}}\right)=O\left(\frac{\log n}{n^{3/2}}\right).
	$$
	
	Let $R$ be a $n\times n$ matrix such that 
	$$R_{ij}=\begin{cases}
		1 & \sigma_i=\sigma_j=-1 \text{ and } i\not=j,\\
		0 & \text{otherwise.}
	\end{cases}$$ We then have $\|R\|_2\leq \sqrt{\sum_{ij} R_{ij}^2}\leq  n$. The following decomposition of $\overline{A}$ holds:
	\begin{align}
		\overline{A}&=\tilde{a}_n^{+}\left[\frac{1}{2}(\mathbf{1}
		\cdot \mathbf{1}^{\top}+\sigma\sigma^{\top})-I\right]+\frac{\tilde{b}_n} {2}(\mathbf{1}\cdot \mathbf{1}^{\top}-\sigma\sigma^{\top})+(\tilde{a}_n^{-}-\tilde{a}_n^+)R \label{Abar}\\
		&=\frac{\tilde{a}_n^+ +\tilde{b}_n}{2}  \mathbf{1}\cdot \mathbf{1}^{\top} +\frac{\tilde{a}_n^+ -\tilde{b}_n}{2}  \sigma\sigma^{\top} +\left((\tilde{a}_n^{-}-\tilde{a}_n^+)R-\tilde{a}_n^+I\right).\label{Abar2} 
	\end{align}
	Since 
	\[\|(\tilde{a}_n^{-}-\tilde{a}_n^+)R-\tilde{a}_n^+I\|_2\leq |\tilde{a}_n^{-}-\tilde{a}_n^+|\cdot \|R\|_2+|\tilde{a}_n^+|= O(\log n/\sqrt n),
	\]
	by \eqref{Abar2}, we have
	\begin{align*}
	\|\overline{A}B^{(m-1)}x\|_2
	= &O\left(\frac{1}{n}\right)\|\mathbf{1}\cdot \mathbf{1}^{\top}B^{(m-1)}x \|_2+O\left(\frac{1}{n}\right)\|\sigma\sigma^{\top}B^{(m-1)}x\|_2
	+O\left(\frac{\log n}{\sqrt n}\right)\|B^{(m-1)}x\|_2.
	\end{align*}
	By Cauchy inequality,
	\begin{align*}
	\|\mathbf{1}\cdot \mathbf{1}^{\top}B^{(m-1)}x\|_2\leq \sqrt n	\|\mathbf{1}^{\top}B^{(m-1)}x\|_2, \quad  \|\sigma \sigma^{\top}B^{(m-1)}x\|_2\leq \sqrt n	\|\sigma^{\top}B^{(m-1)}x\|_2.
	\end{align*}
	 Therefore,
	\begin{align*}
		\|\overline{A}B^{(m-1)}x\|_2
		=&  O(n^{-1/2})(\|\sigma^{\top}B^{(m-1)}x\|_2+\|\mathbf{1}^{\top}B^{(m-1)}x\|_2)+O(\log n/\sqrt{n})\|B^{(m-1)}x\|_2.
	\end{align*}
	Using \eqref{bound1} and \eqref{bound2}, the right hand side in the expression above is upper bounded by 
	\begin{align}\label{467}
	O(\alpha^{(m-1)/2}\log n)+O(\|B^{(m-1)}x\|_2\cdot\log n/\sqrt n).
	\end{align}
	
	Since $B^{(m-1)}$ is a nonnegative matrix, the spectral norm is bounded by the maximum row sum (see Theorem 8.1.22 in \cite{horn2012matrix}),   we have that
	\[\|B^{(m-1)}x\|_2\leq \rho(B^{(m-1)})\leq \max_{i}\sum_{j=1}^n B_{ij}^{(m-1)}.
	\]
	By \eqref{claim1}, \eqref{Be} and \eqref{tanglecount}, the right hand side above is $O(\alpha^{m-1}\log n)$.
	Combing \eqref{467} and  noting that $\alpha^{m-1}/\sqrt n =o(n^{-1/4}),$ it implies 
	\begin{align}\label{eq:997}
\|\overline{A}B^{(m-1)}x\|_2=O(\alpha^{(m-1)/2}\log n)+O(\alpha^{m-1}\log^2 n/\sqrt{n})=O(\alpha^{(m-1)/2}\log n).
	\end{align}
	
	 Taking \eqref{eq:997} into \eqref{expanAB}, we have for any $\epsilon>0$, with high probability,
	$\|B^{(l)}x\|_2={O}(n^{\epsilon}\alpha^{l/2}\log^2 n)\leq n^{2\epsilon} \alpha^{l/2}
	$ for $n$ sufficiently large. This completes the proof.
\end{proof}

\section{Proof of Theorem \ref{coupling2}}
The proof in this section is a generalization of the method in  \cite{mossel2015reconstruction} for sparse random graphs. We now prove the case where $\sigma_i=+1$, and the case for $\sigma_i=-1$ can be treated in the same way. Recall the definition of $V_t$ from Definition \ref{DefVt}. Let $A_t$ be the event that no vertex in $V_{t}$ is connected by two distinct hyperedges to $V_{t-1}$. Let $B_t$ be the event that there does not exist  two vertices in $V_t$ that are contained in a hyperedge $  e\subset {V_{t}\choose d}$.

We can construct the multi-type Poisson hypertree $(T,\rho,\tau)$ in the following way.
For a vertex $v\in T$,
Let $Y_v^{(r)}, 0\leq r\leq d-1$ be the number of hyperedges incident to $v$ which among the remaining $d-1$ vertices, $r$ of them have the same spin with $\tau(v)$. We have 
\begin{align*}
Y_v^{(d-1)} \sim \text{Pois}\left(\frac{a}{2^{d-1}}\right),\quad 
Y_v^{(r)} \sim \text{Pois}\left(\frac{{{d-1}\choose{r}}b}{2^{d-1}}\right), 0\leq r\leq d-2	.
\end{align*}
Note that $(T,\rho,\tau)$ can be entirely reconstructed from the label of the root and the sequence $\{Y_v^{(r)}\}$ for $v\in V(T), 0\leq r\leq d-1$.

We define similar random variables for $(H,i,\sigma)$.
For a vertex $v\in V_t$, let $X_v^{(r)}$ be the number of hyperedges incident to $v$, where  all the remaining $d-1$ vertices are in $V_{t+1}$ such that $r$ of them  have spin $\sigma(v)$. 
Then we have
\begin{align*}X_v^{(d-1)} &\sim \text{Bin}\left({|V_{>t}^{\sigma(v)}|\choose d-1},\frac{a}{{{n}\choose{d-1}}}\right), \\
 X_v^{(r)} &\sim \text{Bin}\left({|V_{>t}^{\sigma(v)}|\choose r}{|V_{>t}^{-\sigma(v)}|\choose d-1-r},\frac{b}{{{n}\choose{d-1}}}\right) , \quad 0\leq r\leq d-2
\end{align*} and conditioned on $\mathcal F_{t}$ (recall the definition of $\mathcal F_t$ from \eqref{Ftsigma}) they are independent. Recall Definition \ref{def:spin_preserving}. We have the following lemma on the spin-preserving isomorphism. The proof of Lemma \ref{isom} is given in Appendix \ref{A7}. 

\begin{lemma}\label{isom} Let $(H,i,\sigma)_{t}, (T,\rho,\tau)_t$ be the rooted hypergraph truncated at distance $t$ from $i, \rho$ respectively. 
If \begin{enumerate}
	\item there is a spin-preserving isomorphism $\phi$ such that $(H,i,\sigma)_{t-1}\equiv (T,\rho,\tau)_{t-1}$,
	\item  for every $v\in V_{t-1}$, $X_v^{(r)}=Y_{\phi(v)}^{(r)}$ for $0\leq r\leq d-1$,
	\item $A_t,B_t$ hold,
\end{enumerate}
then $(H,i,\sigma)_t\equiv (T,\rho,\tau)_t$.
\end{lemma}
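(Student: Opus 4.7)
The plan is to build a bijection $\tilde\phi : V_{\leq t}(H) \to V_{\leq t}(T)$ extending the given $\phi$, and then check that $\tilde\phi$ identifies the hyperedges of $H$ incident to the new vertices with the corresponding hyperedges of $T$ while preserving spins.

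First I would match forward hyperedges. For each $v \in V_{t-1}(H)$ and each $0 \le r \le d-1$, the $X_v^{(r)}$ hyperedges are precisely the hyperedges incident to $v$ whose remaining $d-1$ vertices all lie in $V_t$ and split into $r$ vertices of spin $\sigma(v)$ and $d-1-r$ of the opposite spin. The analogous description on the tree side gives the $Y_{\phi(v)}^{(r)}$ generation-$t$ hyperedges of $T$ through $\phi(v)$. Hypothesis (2), $X_v^{(r)} = Y_{\phi(v)}^{(r)}$, therefore produces a type-preserving bijection between these two families, and inside each matched pair of hyperedges the $d-1$ new vertices have the same spin profile on both sides, so they can be matched spin-preservingly as well. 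Event $A_t$ ensures that the children of distinct $v, v' \in V_{t-1}(H)$ produced in this way occupy disjoint subsets of $V_t$, so the local bijections patch together into a spin-preserving bijection $V_t(H) \to V_t(T)$; combined with $\phi$ this defines $\tilde\phi$ on $V_{\leq t}$.

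What remains is to check that $\tilde\phi$ is an isomorphism of the induced subhypergraphs on $V_{\leq t}$. The hyperedges entirely inside $V_{\leq t-1}$ are already handled by $\phi$, so the task reduces to accounting for every hyperedge of $H$ that meets $V_t$. By the triangle inequality such an $e$ is contained in $V_{t-1} \cup V_t$, and $B_t$ rules out $e \subset V_t$, so $e$ must contain at least one vertex of $V_{t-1}$.

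The main obstacle is then to rule out the case that $e$ contains two or more vertices of $V_{t-1}$, since such a hyperedge is not counted by any $X_v^{(r)}$ and therefore sits outside the matching of the first step. This is precisely where $A_t$ is invoked, together with the inductive tree structure on $V_{\leq t-1}$: a second vertex $v' \in V_{t-1} \cap e$, combined via the isomorphism $(H,i,\sigma)_{t-1} \equiv (T,\rho,\tau)_{t-1}$ with the pre-existing parent-hyperedge of $v$ or $v'$ in the hypertree, would supply each $u \in e \cap V_t$ with a second hyperedge reaching $V_{t-1}$, contradicting $A_t$. Once this case is excluded, $e$ has a unique $V_{t-1}$-vertex $v$ and is one of the $X_v^{(\cdot)}$ hyperedges matched in the first step; since every generation-$t$ hyperedge of $T$ is of forward form by construction of the Galton--Watson hypertree, the matching is exhaustive on both sides and $\tilde\phi$ is the desired spin-preserving isomorphism $(H,i,\sigma)_t \equiv (T,\rho,\tau)_t$.
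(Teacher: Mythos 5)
Your construction follows the same route as the paper's (short) proof: use the count equality $X_v^{(r)}=Y_{\phi(v)}^{(r)}$ to match the forward hyperedges type by type, match the new vertices spin-preservingly inside each matched hyperedge, use $A_t$ so that these local matchings are consistent (no vertex of $V_t$ lies in two distinct hyperedges meeting $V_{t-1}$), and use $B_t$ to exclude hyperedges inside $V_t$. You are in fact more careful than the paper, which does not explicitly verify that every hyperedge of the truncation meeting $V_t$ is of forward form; your reduction showing that such a hyperedge must lie in $V_{t-1}\cup V_t$ and, after $B_t$, must meet $V_{t-1}$, is correct and is exactly the right exhaustiveness check.

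However, the step where you dispose of the remaining case --- a hyperedge $e$ containing two vertices $v,v'\in V_{t-1}$ together with at least one vertex $u\in V_t$ --- is flawed. You claim that the parent hyperedge of $v$ or $v'$ gives $u$ a second hyperedge reaching $V_{t-1}$, contradicting $A_t$. But by the inductive hypertree structure of $(H,i,\sigma)_{t-1}$, the parent hyperedge of $v$ (or $v'$) is contained in $V_{\leq t-1}$: it consists of the parent in $V_{t-2}$ and siblings in $V_{t-1}$, and in particular does not contain $u$. So $u$ is connected to $V_{t-1}$ by the single hyperedge $e$ only, and $A_t$ as literally defined ("no vertex in $V_t$ is connected by two distinct hyperedges to $V_{t-1}$") is not violated; your contradiction evaporates. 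The honest situation is that this configuration is ruled out only if one reads $A_t$ as the slightly stronger event that no vertex of $V_t$ is adjacent to two distinct vertices of $V_{t-1}$ (equivalently, every hyperedge joining $V_t$ to $V_{t-1}$ meets $V_{t-1}$ in exactly one vertex and these hyperedges are otherwise disjoint); that is the event whose probability is actually controlled in the proof of Lemma \ref{AtBt}, where the bound $O(n^{-2})$ for a pair $(u,w),(v,w)$ covers both the two-hyperedge and the single-hyperedge configurations. With that reading the exclusion is immediate and your argument closes; with the literal wording, neither your parent-hyperedge argument nor the stated hypotheses exclude a hyperedge meeting $V_{t-1}$ twice, so this step needs either the strengthened event or a separate low-probability estimate rather than the appeal you made.
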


To make our notation simpler, for the rest of this section, we will identify $v$ with $\phi(v)$. Recall the event
$\Omega_{t}(i)=\{S_{t}(i)\leq C\log (n)\alpha^{t}\}
	$ where the constant $C$ is the same one as in Theorem \ref{quasi}.  Now define a new event
	\begin{align}
	C_t:=\bigcap_{s\leq t}\Omega_s(i).	
	\end{align}
	 From the proof of Theorem \ref{quasi}, for all $t\leq l$,  $\mathbb P_{\mathcal H_n}(C_t)=1-O(n^{-1-\gamma})$ for any $\gamma\in (0,3/8)$. Note that conditioned on $C_t$, there exists $C'>0$ such that
\begin{align}\label{sumbound}
|V_{\leq t}|\leq \sum_{s\leq t}C\log(n)\alpha^t\leq C'\log^2(n)\alpha^{t}.	
\end{align}

We now estimate the probability of event $A_t,B_t$ conditioned on $C_{t}$. The proof is included in Appendix \ref{A8}.
\begin{lemma}\label{AtBt}
	For any $t\geq 1$,
	\begin{align*}
	\mathbb P(A_t |C_{t})\geq 1-o(n^{-1/2}), \quad 
	\mathbb P(B_t |C_{t})\geq 1-o(n^{-1/2}).	
	\end{align*}
\end{lemma}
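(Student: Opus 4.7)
The plan is to establish both bounds via a first-moment argument: control the expected number of configurations that violate $A_t$ or $B_t$ on the event $C_t$ and then apply Markov's inequality. The key quantitative input comes from the bound $S_s(i) \leq C\log(n)\alpha^s$ for all $s \leq t$ implied by $C_t$, combined with the strict inequality $c\log(\alpha) < 1/4$: there exists $\epsilon_0 > 0$ with $\alpha^{2l} \leq n^{1/2 - 2\epsilon_0}$, so that $\log^2(n)\cdot \alpha^{2l}/n = o(n^{-1/2})$. This is the target rate for both probabilities.

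To bound $\mathbb{P}(A_t^c \mid C_t)$, I will expose the BFS through level $t-1$, revealing $V_{\leq t-1}$ together with all hyperedges incident to it. For each candidate $v \notin V_{\leq t-1}$, the number of hyperedges of $H$ that contain $v$ and intersect $V_{t-1}$ is stochastically dominated by $\textnormal{Bin}\bigl(|V_{t-1}|\binom{n-2}{d-2},\, \frac{a\vee b}{\binom{n}{d-1}}\bigr)$, whose mean is $O(|V_{t-1}|/n)$. Its second factorial moment yields $\mathbb{P}(v \text{ has } \geq 2 \text{ back-edges to } V_{t-1}) = O((|V_{t-1}|/n)^2)$. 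Union-bounding over the at most $n$ candidates $v$ and applying $|V_{t-1}| \leq C\log(n)\alpha^{t-1}$ on $C_t$ gives an expected number of violating vertices of order $O(\log^2(n)\alpha^{2l}/n) = o(n^{-1/2})$, and Markov's inequality finishes.

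For $\mathbb{P}(B_t^c \mid C_t)$, I will count pairs $\{u,v\} \subset V_t$ contained in some common hyperedge $e \in H$. Each such pair is contained in $\binom{n-2}{d-2}$ candidate hyperedges, and each is present with probability at most $\frac{a\vee b}{\binom{n}{d-1}}$. Summing over $\binom{|V_t|}{2} \leq S_t^2$ pairs produces an expected number of violating pairs bounded by $O(S_t^2/n) = O(\log^2(n)\alpha^{2l}/n) = o(n^{-1/2})$ on $C_t$, and Markov's inequality concludes.

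The main obstacle is handling the dependence between $V_t$ and the hyperedges being counted, since $V_t$ is itself random. The clean resolution is sequential BFS exposure: after revealing all hyperedges incident to $V_{\leq t-1}$, the remaining hyperedges are conditionally independent Bernoullis with their original probabilities. The event $A_t$ depends only on revealed hyperedges (those incident to $V_{t-1}$), so the first-moment estimate applies directly to the conditional distribution. For $B_t$, a bad hyperedge $e$ with $|e \cap V_t| \geq 2$ either meets $V_{t-1}$, in which case it is already revealed, or lies entirely outside $V_{\leq t-1}$, in which case it is still conditionally Bernoulli with the same probability; the term-by-term first-moment bound applies in either case.
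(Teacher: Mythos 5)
Your proposal is correct and follows essentially the same route as the paper: conditional on $C_t$, a first-moment/union-bound count of the offending configurations (a vertex with two back-hyperedges to $V_{t-1}$ for $A_t$, a pair of $V_t$-vertices sharing a hyperedge for $B_t$), each contributing $O(1/n^2)$ resp. $O(1/n)$, so that the neighborhood-size bound and $\alpha^{2l}=o(n^{1/2})$ give the rate $o(n^{-1/2})$ via Markov. The BFS-exposure framing you add is a mild extra justification of the conditional independence that the paper leaves implicit, not a different argument.
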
	

Before  proving Theorem \ref{coupling2}, we also need the following bound on the total variation distance between binomial and Poisson random variables, see for example Lemma 4.6 in \cite{mossel2015reconstruction}.
\begin{lemma}\label{coupling} Let $m,n$ be integers and $c$ be a positive constant. The following holds:
	$$ \left\|\textnormal{Bin}\left(m,\frac{c}{n}\right)-\textnormal{Pois}(c)\right \|_{\textnormal{TV}}=O\left(\frac{1\vee |m-n|}{n}\right).$$
\end{lemma}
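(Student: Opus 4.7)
The plan is a two-step triangle-inequality argument that introduces an intermediate Poisson with matching mean:
\begin{align*}
\left\|\textnormal{Bin}\!\left(m,\tfrac{c}{n}\right)-\textnormal{Pois}(c)\right\|_{\textnormal{TV}}
&\leq \left\|\textnormal{Bin}\!\left(m,\tfrac{c}{n}\right)-\textnormal{Pois}\!\left(\tfrac{mc}{n}\right)\right\|_{\textnormal{TV}}\\
&\quad +\left\|\textnormal{Pois}\!\left(\tfrac{mc}{n}\right)-\textnormal{Pois}(c)\right\|_{\textnormal{TV}}.
\end{align*}

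For the first term I would invoke Le Cam's Poisson approximation inequality (proved via the Stein--Chen method, or directly by coupling each of the $m$ Bernoulli$(c/n)$ summands with an independent Pois$(c/n)$): for Bernoulli sums with common parameter $p=c/n$,
\[
\left\|\textnormal{Bin}(m,p)-\textnormal{Pois}(mp)\right\|_{\textnormal{TV}}\leq mp^{2}=\frac{c^{2}m}{n^{2}}.
\]
For the second term I would use the standard thinning/coupling of two Poisson laws: if $\lambda\leq\mu$ then $\textnormal{Pois}(\mu)\stackrel{d}{=}\textnormal{Pois}(\lambda)+\textnormal{Pois}(\mu-\lambda)$ with the summands independent, so declaring the two laws equal exactly when the second summand is $0$ gives the coupling bound
\[
\left\|\textnormal{Pois}(\lambda)-\textnormal{Pois}(\mu)\right\|_{\textnormal{TV}}\leq 1-e^{-|\mu-\lambda|}\leq |\mu-\lambda|.
\]
Applied with $\{\lambda,\mu\}=\{c,mc/n\}$ this yields $c|m-n|/n$. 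Combining,
\[
\left\|\textnormal{Bin}\!\left(m,\tfrac{c}{n}\right)-\textnormal{Pois}(c)\right\|_{\textnormal{TV}}\leq \frac{c^{2}m}{n^{2}}+\frac{c|m-n|}{n}.
\]

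Finally I would split into regimes to absorb the first term into the desired bound. If $|m-n|\geq n$ then $(1\vee|m-n|)/n\geq 1$, so the bound holds trivially since TV distance is at most $1$. Otherwise $m<2n$, whence $c^{2}m/n^{2}=O(1/n)$, and the right-hand side is $O(1/n+|m-n|/n)=O((1\vee|m-n|)/n)$, as required. The only step that needs any care is this regime split to make the $mc^{2}/n^{2}$ term compatible with the stated $(1\vee|m-n|)/n$ form; the two ingredients (Le Cam's inequality and the Poisson coupling bound) are textbook and present no real obstacle.
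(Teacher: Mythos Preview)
Your proof is correct. Note, however, that the paper does not actually prove this lemma: it simply cites it as Lemma~4.6 of \cite{mossel2015reconstruction} and moves on. So there is no ``paper's own proof'' to compare against here; you have supplied a clean self-contained argument where the paper only gives a reference. The two ingredients you use (Le~Cam's inequality $\|\textnormal{Bin}(m,p)-\textnormal{Pois}(mp)\|_{\textnormal{TV}}\le mp^2$ and the Poisson--Poisson coupling bound $\|\textnormal{Pois}(\lambda)-\textnormal{Pois}(\mu)\|_{\textnormal{TV}}\le|\lambda-\mu|$) are exactly the standard route to this estimate, and your regime split to absorb the $c^2m/n^2$ term is the right way to match the stated $O((1\vee|m-n|)/n)$ form.
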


\begin{proof}[Proof of Theorem \ref{coupling2}]
	Fix $t$ and suppose that $C_{t}$ holds, and $(T,\rho)_t\equiv (H,i)_t$. Then for each $v\in V_t$, recall
	\begin{align*}
	X_v^{(d-1)} \sim \text{Bin}\left({|V_{>t}^{\sigma(v)}|\choose d-1},\frac{a}{{{n}\choose{d-1}}}\right),  \quad 
 X_v^{(r)} \sim \text{Bin}\left({|V_{>t}^{\sigma(v)}|\choose r}{|V_{>t}^{-\sigma(v)}|\choose d-1-r},\frac{b}{{{n}\choose{d-1}}}\right) 
\end{align*}
	and 
	\begin{align*}
Y_{v}^{(d-1)} \sim \text{Pois}\left(\frac{a}{2^{d-1}}\right), \quad 
Y_{v}^{(r)} \sim \text{Pois}\left(\frac{{{d-1}\choose{r}}b}{2^{d-1}}\right), \quad  0\leq r\leq d-2.	
\end{align*}
 Recall $|n^{\pm}-n/2|\leq \sqrt{n}\log n$. We have the following bound for $V_{>t}^{\pm}$: 
\begin{align*}
	 |V_{>t}^{\pm}|&\geq n^{\pm}-|V_{\leq t}|\geq \frac{n}{2}-\sqrt{n}\log(n)-O(\log^2(n)\alpha^{2t})\geq \frac{n}{2}-2\sqrt{n}\log (n),\\
	 |V_{>t}^{\pm}| &\leq n^{\pm}\leq \frac{n}{2}+\sqrt{n}\log(n).
\end{align*}
Therefore $|V_{>t}^{\pm}-\frac{n}{2}|\leq 2\sqrt{n}\log n$. Then from Lemma \ref{coupling},
\begin{align*}
\|X_v^{(d-1)}-Y_{v}^{(d-1)}\|_{\textnormal{TV}} &\leq C\frac{\left|{|V_{>t}^{\sigma(v)}|\choose d-1}-\frac{1}{2^{d-1}}{n\choose d-1}\right|}{\frac{1}{2^{d-1}}{n\choose d-1}}=O(n^{-1/2}\log n),\\
	\|X_v^{(r)}-Y_{v}^{(r)}\|_{\textnormal{TV}} &=O(n^{-1/2}\log n), \quad 0\leq r\leq d-2.
\end{align*}
We can couple $X_v^{(r)}$ with $Y_{v}^{(r)}, 0\leq r\leq d-1$ such that 
$  \mathbb P\left(X_v^{(r)}\not=Y_{v}^{(r)}\right)=O(n^{-1/2}\log n).$ 
Taking a union bound over all $v\in V_{t}$, and  $0\leq r\leq d-1$ and  recall \eqref{sumbound},  we can find a coupling such that with probability at least $$1-O(\log^3(n)\alpha^l n^{-1/2})\geq 1-o(n^{-1/4}),$$ 	$X_v^{(r)}=Y_{v}^{(r)}$ for every $v\in V_t$ and $0\leq r\leq d-1$. 

Lemma \ref{AtBt} implies $A_t,B_t, C_t$ hold simultaneously with probability at least $1-o(n^{-1/4})$.  Altogether we have that  assumptions (2),(3) in Lemma  \ref{isom} hold with probability $1-o(n^{-1/4})$, which can be written as
\begin{align*}
\mathbb P\left((H,i,\sigma)_{t+1}\equiv (T,\rho,\tau)_{t+1}, C_{t+1} \mathrel{\Big|}	(H,i,\sigma)_{t}\equiv (T,\rho,\tau)_{t}, C_{t}\right)\geq 1-o(n^{-1/4}).
\end{align*}
 Since we can certainly couple $i$ with $\rho$ from our construction, we have $
\mathbb P\left((H,i,\sigma)_{0}\equiv (T,\rho,\tau)_{0}, C_0 \right)=1.
$ Therefore  for large $n$,
\begin{align*}
	&\mathbb P((H,i,\sigma)_{l}\equiv (T,\rho,\tau)_{l}) \\
	=&\prod_{t=1}^{l}\mathbb P\left((H,i,\sigma)_{t}\equiv (T,\rho,\tau)_{t}, C_t \mathrel{\Big|}	(H,i,\sigma)_{t-1}\equiv (T,\rho,\tau)_{t-1}, C_{t-1}\right)\cdot \mathbb P\left((H,i,\sigma)_0\equiv (T,\rho,\tau)_{0}, C_{0}\right)\\
	\geq &(1-o(n^{-1/4}))^l\geq 1-n^{-1/5}. 
\end{align*}
 This completes the proof.
\end{proof}

\section{Proof of Theorem \ref{rama3}}\label{sec:rama3}

The proof  of the following Lemma \ref{rama1} follows in a similar way as  Lemma 4.4  in \cite{massoulie2014community}, and we include it in  Appendix \ref{A11}. 

\begin{lemma}\label{rama1}
	For $l=c\log(n), c\log(\alpha)<1/4$, the following hold asymptotically almost surely
	\begin{align}
	\|B^{(l)}\mathbf{1}-\vec{S}_l\|_2&=o(\|B^{(l)}\mathbf{1}\|_2) \label{B1},\\
	\|B^{(l)}\sigma-\vec{D}_l\|_2&=o(\|B^{(l)}\sigma\|_2)\label{B2},\\
	\langle B^{(l)}\mathbf{1}, B^{(l)}\sigma\rangle &=o\left(\|B^{(l)}\mathbf{1}\|_2 \cdot \|B^{(l)}\sigma \|_2\right)\label{B3}.
	\end{align}
\end{lemma}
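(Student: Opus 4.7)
The plan is to exploit the hypertree structure of local neighborhoods: $(B^{(l)}\mathbf 1)_i$ should equal $S_l(i)$ and $(B^{(l)}\sigma)_i$ should equal $D_l(i)$ for every vertex whose $l$-neighborhood is a hypertree, so the first two estimates reduce to controlling the tangled set $\mathcal B$, while the inner product estimate reduces to a second moment argument based on the Galton--Watson tree coupling.

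For \eqref{B1} and \eqref{B2}, I would first observe that in a hypertree rooted at $i$ any self-avoiding walk of length $l$ strictly descends: two distinct hyperedges of a hypertree meet in at most one vertex, so the conditions in Definition \ref{def:walk} force $i_t\in V_t(i)$ at every step, and conversely each $v\in V_l(i)$ is reached by the unique ancestral walk. Hence $(B^{(l)}\mathbf 1)_i=S_l(i)$ and $(B^{(l)}\sigma)_i=D_l(i)$ for every $i\notin\mathcal B$. For $i\in\mathcal B$, Lemma \ref{TangleCount} and Theorem \ref{quasi} supply the uniform bound $O(\alpha^l\log n)$ for each of the four quantities, and combining with $|\mathcal B|\le\log^4(n)\alpha^{2l}$ from Lemma \ref{tangle} yields
\[
\|B^{(l)}\mathbf 1-\vec S_l\|_2^2+\|B^{(l)}\sigma-\vec D_l\|_2^2=O(\log^6(n)\alpha^{4l}).
\]
For the denominators, Lemma \ref{thresholding} gives $\|\vec D_l\|_2^2=\Theta(n\beta^{2l})$ asymptotically almost surely, and an analogous statement $\|\vec S_l\|_2^2=\Omega(n\alpha^{2l})$ follows from the uniformly integrable martingale $M_t$ of Lemma \ref{lem:martingale} together with a second moment count of $\{i:S_l(i)\ge\delta\alpha^l\}$ via the coupling of Theorem \ref{coupling2} and the pairwise approximate independence of Lemma \ref{approxind}. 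Since $c\log\alpha<1/4$ forces $\alpha^{4l}\le n\alpha^{2l}$, the ratio in \eqref{B1} is $O(\log^6(n)\alpha^{2l}/n)=o(1)$; the same estimate together with $\beta^{2l}$ being a positive power of $n$ handles \eqref{B2}.

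For \eqref{B3}, I would decompose $B^{(l)}\mathbf 1=\vec S_l+E_1$ and $B^{(l)}\sigma=\vec D_l+E_2$. Cauchy--Schwarz combined with the preceding step handles the three cross terms, so it suffices to show $|\langle\vec S_l,\vec D_l\rangle|=o(n\alpha^l\beta^l)$. Here the key point is the global spin-flip symmetry of $\mathcal H_n$: the Galton--Watson hypertree obtained by relabelling the coupled tree so that the root carries spin $+$ has a law independent of $\sigma_i$. Writing $\tilde S^{(i)}:=W_l^++W_l^-$ and $\tilde D^{(i)}:=W_l^+-W_l^-$ for the counts in this canonical tree, Theorem \ref{coupling2} yields $S_l(i)=\tilde S^{(i)}$ and $D_l(i)=\sigma_i\tilde D^{(i)}$ with probability at least $1-n^{-1/5}$, so
\[
\langle\vec S_l,\vec D_l\rangle=\sum_{i=1}^n\sigma_iY_i,\qquad Y_i:=\tilde S^{(i)}\tilde D^{(i)},
\]
with $\sigma_i$ approximately independent of $Y_i$. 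After truncating to the event from Theorem \ref{quasi} where $|Y_i|=O(\alpha^l\beta^l\log^2 n)$ for every $i$, one trades the TV bound of Lemma \ref{approxind} for a covariance bound $|\mathrm{Cov}(\sigma_iY_i,\sigma_jY_j)|=O(n^{-\gamma}\alpha^{2l}\beta^{2l}\log^{O(1)}n)$ for $i\neq j$. Hence $\mathbb E\bigl[\bigl(\sum_i\sigma_iY_i\bigr)^2\bigr]=O(n^{2-\gamma}\alpha^{2l}\beta^{2l}\log^{O(1)}n)$, and Markov's inequality gives $|\sum_i\sigma_iY_i|=o(n\alpha^l\beta^l)$ in probability.

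The main obstacle is the variance estimate in the third step. Lemma \ref{approxind} only provides a total-variation bound, which on its own cannot control the covariance of unbounded statistics, and the coupling only holds up to probability $n^{-1/5}$. The standard remedy is to truncate each $Y_i$ using the asymptotically almost sure growth bounds of Theorem \ref{quasi} before exchanging the TV distance for a covariance bound, and then to verify that the polylogarithmic loss from truncation is overwhelmed by the polynomial gain $n^{-\gamma}$ coming from Lemma \ref{approxind}; the range $\gamma\in(0,3/8)$ available there is comfortably sufficient under the standing assumption $c\log\alpha<1/4$.
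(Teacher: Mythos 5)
Your proposal is correct and follows essentially the same route as the paper: \eqref{B1} and \eqref{B2} via the split into tree-like and tangled vertices (Lemma \ref{tangle}, \eqref{tanglecount}, Theorem \ref{quasi}) with denominators of order $\sqrt{n}\alpha^l$ and $\sqrt{n}\beta^l$, and \eqref{B3} reduced to $|\langle\vec S_l,\vec D_l\rangle|=o(n\alpha^l\beta^l)$, which the paper also proves by the Lemma \ref{thresholding}-style argument combining the hypertree coupling, the spin-flip cancellation of the mean, and the pairwise approximate independence of Lemma \ref{approxind} in a second-moment/Chebyshev bound. The only caution is that the displayed identity $\langle\vec S_l,\vec D_l\rangle=\sum_i\sigma_iY_i$ holds per vertex only on each coupling event, so the $O(n\cdot n^{-1/5})$ expected failures must be absorbed using the Theorem \ref{quasi} truncation $|S_l(i)D_l(i)|=O(\alpha^l\beta^l\log^2 n)$ — exactly the step you flag and the same device used in the paper's proof of Lemma \ref{thresholding}.
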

The next lemma estimate $\|B^{(l)}x\|_2$ when $x=B^{(l)}\sigma$ and $B^{(l)}\mathbf{1}$. The proof of Lemma \ref{rama2} is provided in Appendix \ref{A12}.

\begin{lemma}\label{rama2}
	Assume $\beta^2>\alpha>1$ and $l=c\log(n)$ with $c\log(\alpha)<1/8$. Then for some fixed $\gamma>0$ asymptotically almost surely one has
	\begin{align}
		\Omega(\alpha^l)\|B^{(l)}\mathbf{1}\|_2 &\leq \|B^{(l)}B^{(l)}\mathbf{1}\|_2\leq O(\alpha^l\log n)\|B^{(l)}\mathbf{1}\|_2 \label{Bound1},\\
		\Omega(\beta^l)\|B^{(l)}\sigma\|_2 &\leq \|B^{(l)}B^{(l)}\sigma\|_2
		\leq {O}( n^{-\gamma}\alpha^l)
		\|B^{(l)}\sigma\|_2.
		\label{Bound2}
	\end{align}
\end{lemma}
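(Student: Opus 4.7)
The plan is to apply the matrix expansion identity of Theorem \ref{matrixexpansionthm}(1) to the vectors $y_1 := B^{(l)}\mathbf{1}$ and $y_2 := B^{(l)}\sigma$, giving $B^{(l)} y_j = \Delta^{(l)} y_j + \sum_{m=1}^l \Delta^{(l-m)}\overline{A}\,B^{(m-1)} y_j - \sum_{m=1}^l \Gamma^{(l,m)} y_j$ for $j \in \{1,2\}$. Theorem \ref{matrixexpansionthm}(2) controls the $\Delta^{(l)}$ and $\Gamma$ pieces by $n^\epsilon \alpha^{l/2}\|y_j\|_2$ and $O(n^{\epsilon-1}\alpha^l \log n)\|y_j\|_2$ respectively, both negligible compared with the target rates $\alpha^l$ and $\beta^l$ once $c\log\alpha < 1/8$. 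For the central sum I will plug in the rank-two approximation $\overline{A} = \frac{\alpha}{n}\mathbf{1}\mathbf{1}^\top + \frac{\beta}{n}\sigma\sigma^\top + E$ with $\|E\|_2 = O(\log n/\sqrt{n})$ read off from \eqref{Abar2}, together with the bilinear asymptotics $\langle B^{(m-1)}\mathbf{1}, y_1\rangle \sim \alpha^{m-1-l}\|y_1\|_2^2$ and $\langle B^{(m-1)}\sigma, y_2\rangle \sim \beta^{m-1-l}\|y_2\|_2^2$ plus the \`\`cross'' estimates $\langle B^{(m-1)}\sigma, y_1\rangle$ and $\langle B^{(m-1)}\mathbf{1}, y_2\rangle$, both $o(\alpha^{m-1-l}\|y_1\|_2\|y_2\|_2)$; all of these follow from Lemma \ref{TangleCount} combined with the orthogonality $\langle y_1, y_2\rangle = o(\|y_1\|_2\|y_2\|_2)$ of Lemma \ref{rama1}.

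For the upper bound $\|B^{(l)} y_1\|_2 \leq O(\alpha^l\log n)\|y_1\|_2$ the expansion is actually not needed: since $B^{(l)}$ is nonnegative and symmetric, $\rho(B^{(l)}) \leq \max_i (B^{(l)}\mathbf{1})_i$, and this maximum is $O(\alpha^l\log n)$ by Theorem \ref{quasi}\eqref{claim1} on non-tangled vertices (where $(B^{(l)}\mathbf{1})_i = S_l(i)$) and by Lemma \ref{TangleCount}(2) on the tangled set $\mathcal{B}$. For the sharper bound $\|B^{(l)} y_2\|_2 \leq O(n^{-\gamma}\alpha^l)\|y_2\|_2$, substituting the structure of $\overline A$ into the expansion makes the $\frac{\alpha}{n}\mathbf{1}\mathbf{1}^\top$ piece negligible by the orthogonality above, while the $\frac{\beta}{n}\sigma\sigma^\top$ piece produces the dominant $m = l$ contribution (where $\Delta^{(0)} = I$) of order $\frac{1}{\sqrt n}\|y_2\|_2^2 \sim \sqrt{n}\,\beta^{2l} = (\beta/\alpha)^l\alpha^l\|y_2\|_2$, yielding the required $\gamma := c\log(\alpha/\beta) > 0$; the $m < l$ tail is controlled by $\rho(\Delta^{(l-m)}) \leq n^\epsilon\alpha^{(l-m)/2}$ and sums geometrically without surpassing the $m = l$ contribution.

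For the lower bounds I use $\|B^{(l)} y_j\|_2 \geq |\langle y_j, B^{(l)} y_j\rangle|/\|y_j\|_2$. The same expansion evaluated inside this quadratic form is again dominated by its $m = l$ piece: for $j = 1$ the leading contribution is $\frac{\alpha}{n}(\mathbf{1}^\top y_1)\bigl((B^{(l-1)}\mathbf{1})^\top y_1\bigr) \sim \frac{\alpha}{n}\cdot n\alpha^l \cdot \alpha^{-1}\|y_1\|_2^2 = \alpha^l\|y_1\|_2^2$, where $\mathbf{1}^\top y_1 = \sum_i (B^{(l)}\mathbf{1})_i \sim n\alpha^l$ follows from Theorem \ref{quasi} plus the $L^1$-convergence of the martingale $M_t$ to its unit mean (Lemma \ref{lem:martingale}); for $j = 2$, the analogous contribution is $\frac{\beta}{n}(\sigma^\top y_2)\bigl((B^{(l-1)}\sigma)^\top y_2\bigr) \sim \beta^l\|y_2\|_2^2$, with $\sigma^\top y_2 = \sum_i \sigma_i D_l(i) \sim n\beta^l$ deduced from Theorem \ref{quasi}\eqref{claim4} and the hypertree identity $\mathbb{E}[\sigma_i D_l(i)] \sim \beta^l$ (a consequence of Theorem \ref{coupling2} and $\mathbb{E}[\Delta_\infty] = 1$). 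The main technical obstacle is the quantitative control of all subdominant pieces: one must absorb the $|\mathcal{B}| \leq \log^4(n)\alpha^{2l}$ tangled vertices (Lemma \ref{tangle}), the $O(\alpha^{(m-1)/2}\log n)$ per-vertex correction from Lemma \ref{TangleCount}, and---most delicately---extract a polynomial decay rate (of order $\log n/\sqrt n$, driven by $|n^+ - n^-| = O(\sqrt n\log n)$) in Lemma \ref{rama1}'s orthogonality, which is needed to keep the cross $\mathbf{1}\mathbf{1}^\top$ contribution below the $n^{-\gamma}\alpha^l$ threshold in the upper bound on $\|B^{(l)}y_2\|_2$.
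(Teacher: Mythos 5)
Your plan for the crucial upper bound in \eqref{Bound2} has a genuine hole exactly at the point you yourself flag. After inserting $\overline A\approx\frac{\alpha}{n}\mathbf{1}\mathbf{1}^{\top}+\frac{\beta}{n}\sigma\sigma^{\top}+E$ into the expansion, the dangerous piece is $\frac{\alpha}{n}\mathbf{1}\,(\mathbf{1}^{\top}B^{(m-1)}B^{(l)}\sigma)$, and via Lemma \ref{TangleCount} its control reduces to showing $\langle B^{(l)}\mathbf{1},B^{(l)}\sigma\rangle=O(n^{1-\delta}\alpha^{l}\beta^{l})$ for some fixed $\delta>0$; with only the $o(n\alpha^l\beta^l)$ of Lemma \ref{rama1} you get $o(\alpha^l)\|B^{(l)}\sigma\|_2$, which does not give any fixed $\gamma>0$. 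Your asserted rate $\log n/\sqrt n$ ``driven by $|n^{+}-n^{-}|$'' is not justified: the fluctuations of $\sum_i S_l(i)D_l(i)$ are also governed by the merely approximate independence of distinct neighborhoods (Lemma \ref{approxind} gives a total-variation error $O(n^{-\gamma})$ for some small $\gamma$, not $n^{-1/2}$) and by the per-vertex coupling failure probability $n^{-1/5}$ in Theorem \ref{coupling2}, so the best one can hope for is some small polynomial rate, and it must actually be proved. This quantitative input is precisely what the paper never needs: its proof of \eqref{Bound2} splits $B^{(l)}B^{(l)}\sigma$ according to the tangled set $\mathcal B$, couples the $2l$-neighborhood of a good vertex with the hypertree (legitimate since $2c\log\alpha<1/4$), and bounds $\mathbb E[(B^{(l)}B^{(l)}\sigma)_i^2]=O(\beta^{4l}\log^4 n)$ using the correlation-decay estimate $\mathbb E[\sigma_j\sigma_{j'}]\le(\beta/\alpha)^{d(j,j')-1}$ proved by induction along the tree. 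That second-moment/correlation-decay computation is the real content of the sharp bound $O(n^{-\gamma}\alpha^l)$, and nothing in your sketch substitutes for it.

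The lower bounds as you set them up also do not close. Evaluating $\langle y_j,B^{(l)}y_j\rangle$ through the expansion and bounding the $m<l$ terms by $\rho(\Delta^{(l-m)})\le n^{\epsilon}\alpha^{(l-m)/2}$ cannot establish dominance of the $m=l$ piece: each such bound carries a factor $n^{\epsilon}$ with $\epsilon>0$ fixed, while the gain per unit of $l-m$ is only a constant ($\alpha^{-1/2}$ for $j=1$, $\sqrt{\alpha}/\beta$ for $j=2$), so for all $m$ with $l-m\lesssim \epsilon\log n/\log\alpha$ --- a positive proportion of the range, since $\alpha^{l}\le n^{1/8}$ --- the bound on the subdominant term exceeds the claimed main term $\alpha^{l}\|y_1\|_2^2$ (resp.\ $\beta^{l}\|y_2\|_2^2$), and the unknown signs forbid ruling out cancellation. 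This slack is harmless for your upper bound (the target $n^{-\gamma}\alpha^{l}$ has room), but the lower bounds demand the exact orders. The repair is the paper's one-line argument, which needs no expansion at all: $\|B^{(l)}\mathbf{1}\|_2^2=\langle\mathbf{1},B^{(l)}B^{(l)}\mathbf{1}\rangle\le\sqrt n\,\|B^{(l)}B^{(l)}\mathbf{1}\|_2$ together with $\|B^{(l)}\mathbf{1}\|_2=\Theta(\sqrt n\,\alpha^{l})$, and the same with $\sigma$ using $\|B^{(l)}\sigma\|_2=\Theta(\sqrt n\,\beta^{l})$. (Your upper bound in \eqref{Bound1} via the maximum row sum coincides with the paper's.)
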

Together with Lemma \ref{rama1} and Lemma \ref{rama2}, we are ready to prove Theorem \ref{rama3}.

\begin{proof}[Proof of Theorem \ref{rama3}]
	From Theorem \ref{thm:endsection} and Lemma \ref{rama2}, the top two eigenvalues of $B^{(l)}$ will be asymptotically in the span of $B^{(l)}\mathbf{1}$ and $B^{(l)}\sigma$. By the lower bound in \eqref{Bound1} and the upper bound in \eqref{Bound2}, the largest eigenvalue of $B^{(l)}$ will be ${\Theta}(\alpha^l)$ up to a logarithmic factor, and the first eigenvector is asymptotically aligned with  $B^{(l)}\mathbf{1}$.
	
	 From \eqref{B1}, $B^{(l)}\mathbf{1}$ is also asymptotically aligned with $\vec{S}_l$, therefore our statement for the first eigenvalue and eigenvector holds. Since $B^{(l)}\mathbf{1}$ and $B^{(l)}\sigma$ are asymptotically orthogonal from \eqref{B3}, together with \eqref{Bound2}, the second eigenvalue of $B^{(l)}$ is $\Omega(\beta^l)$ and the second eigenvector is asymptotically aligned with $B^{(l)}\sigma$.  
		 From \eqref{B2}, $B^{(l)}\sigma$ is asymptotically aligned with $\vec{D}_l$. So the statement for the second eigenvalue and eigenvector holds. The order  of other eigenvalues follows from Theorem \ref{thm:endsection} and  the Courant minimax principle (see \cite{horn2012matrix}).
\end{proof}

\begin{appendix}
\section{}\label{appB}

\subsection{Proof of Lemma \ref{approxind}}\label{sec:appendix_approx}\label{A3}

\begin{proof}
The two sequences $(U_{k}^{\pm}(i))_{k\leq l}$,  $(U_{k}^{\pm}(j))_{k\leq l}$ are independent conditioned on the event $\{V_{\leq l}(i)\cap  V_{\leq l}(j)=\emptyset\}$. It remains to estimate $\mathbb P_{\mathcal H_n}\left(\{V_{\leq l}(i)\cap  V_{\leq l}(j)=\emptyset\}\right)$.
Introduce the events $$ \mathcal J_k:=\bigcap_{t\leq k} \{S_t(i)\vee S_t(j)\leq C\log(n)\alpha^t \}, \quad \mathcal L_k:= \{ V_{\leq k}(i)\bigcap V_{\leq k}(j)=\emptyset \},$$
where the constant $C$ is the same one as in the statement of Theorem \ref{quasi}.  
For any vertex $v\in [n]\setminus (V_{\leq k}(i)\cup V_{\leq k}(j))$,  Conditioned on  $\mathcal L_k$ and $\mathcal J_k$,  there are  two possible situations where $v$ is included in $V_{k+1}(i)\cap V_{k+1}(j)$: 
\begin{enumerate}[(1)]
	\item There is a hyperedge containing $v$ and a vertex in $V_k(i)$, and a different hyperedge containing $v$ and a vertex in $V_k(j)$.
	\item There is a hyperedge containing $v$, one vertex in $V_{k}(i)$, and another vertex in $V_k(j)$.
\end{enumerate}
There exists a constant $C_1>0$ such that Case (1) happens with probability at most $$S_k(i)S_k(j){n\choose d-2}^2\left(\frac{a\vee b}{{n\choose d-1}}\right)^2\leq C_1 \log^2(n)\alpha^{2k}/n^2,$$ and Case (2) happens with probability at most $$S_k(i)S_k(j){n\choose d-3}\frac{a\vee b}{{n\choose d-1}}=C_1\log^2(n)\alpha^{2k}/n^2. $$ Since $\alpha^{2l}=n^{2c\log\alpha}=o(n^{1/2})$,  we have for large $n$,
$$\mathbb P_{\mathcal H_n}(v\in V_{k+1}(i)\cap V_{k+1}(j) \mid \mathcal J_k, \mathcal L_k)\leq 2C_1\log^2(n)\alpha^{2l}/n^2<n^{-1.5}. 
$$
Taking a union bound over all possible $v$, we have for some constant $C_3>0$,$$\mathbb P_{\mathcal H_n}(V_{k+1}(i)\cap V_{k+1}(j) =\emptyset \mid \mathcal J_k, \mathcal L_k)\geq 1-C_3n^{-1/2}. 
$$

From the proof of Theorem \ref{quasi}, for all $0\leq k\leq l$, $\mathbb P_{H_n}(\mathcal J_k)=1-O(n^{-1-\gamma})$  for any $\gamma\in (0,3/8)$. We then have 
\begin{align*} 
\mathbb P_{\mathcal H_n}(V_{k+1}(i)\cap V_{k+1}(j) =\emptyset \mid  \mathcal L_k)
\geq & \mathbb P_{\mathcal H_n}(V_{k+1}(i)\cap V_{k+1}(j) =\emptyset \mid  \mathcal J_k, \mathcal L_k)~\mathbb P_{\mathcal H_n}(\mathcal J_k)\geq 1-O(n^{-1/2}). 
\end{align*}
Finally, for large $n$,
\begin{align*}\mathbb P_{\mathcal H_n}\left(\{V_{\leq l}(i)\cap  V_{\leq l}(j)=\emptyset\}\right)
 =&
\mathbb P_{\mathcal H_n}(\mathcal L_l) \geq \mathbb P_{\mathcal H_n}(V_l(i)\cap V_l(j)=\emptyset \mid \mathcal L_{l-1})	\mathbb P_{\mathcal H_n}( \mathcal L_{l-1})\\
 \geq & \mathbb P_{\mathcal H_n}(\mathcal L_0)\prod_{k=0}^{l-1}\mathbb P_{\mathcal H_n}(V_{k+1}(i)\cap V_{k+1}(j)=\emptyset \mid  \mathcal L_{k})\\
                 \geq & (1-O(n^{-1/2}))^l\geq 1-n^{-1/3}.
\end{align*}
This completes the proof.
\end{proof}

\subsection{Proof of Lemma \ref{tangle}}\label{A4}
\begin{proof} Consider the exploration process of the neighborhood of a fixed vertex $i$. Conditioned on $\mathcal F_{k-1}$,
	there are two ways to create  new cycles in $V_{\geq k-1}(i)$: 
	\begin{enumerate}
	\item 	Type 1: a new hyperedge $e\subset V_{\geq k-1}(i)$ containing two vertices in $V_{k-1}(i)$ may appear, which creates a cycle including two vertices in $V_{k-1}(i)$.
	\item Type 2: two vertices in $V_{k-1}(i)$ may be connected to the same vertex in $V_{\geq k}(i)$ by two new distinct hyperedges.

	\end{enumerate}
 	Define the event
	\begin{align}\label{event}
	\Omega_{k-1}(i):=\{S_{k-1}(i)\leq C\log (n)\alpha^{k-1}\},
	\end{align}
	where the constant $C$ is the same one as in Theorem \ref{quasi}. From the proof of Theorem \ref{quasi}, $  \mathbb P_{\mathcal H_n}(\Omega_k(i))=1-O(n^{-1-\gamma})$ for some $\gamma\in (0,3/8)$. 	Let $E_k^{(1)}(i)$ be the number of hyperedges of type 1. Conditioned on $\mathcal F_{k-1}$, $E_k^{(1)}(i)$ is stochastically dominated by
	$\textnormal{Bin}\left({{S_{k-1}(i)}\choose{2}}{{n}\choose{d-2}},\frac{a\vee b }{{{n}\choose{d-1}}}\right).$ Then for some constant $C_1>0$, 
	$$\mathbb E_{\mathcal H_n}[E_{k}^{(1)}(i)\mid \Omega_{k-1}(i)]\leq C_1\log^2(n)\alpha^{2k-2}/n\leq C_1\log^2(n)\alpha^{2l}/n.$$
	By Markov's inequality,
	\begin{align*}
	 \mathbb P_{\mathcal H_n}(\{E_k^{(1)}(i)\geq 1\})
	\leq &\mathbb P_{\mathcal H_n}(\{E_k^{(1)}(i)\geq 1\}\mid \Omega_{k-1}(i))+\mathbb P_{\mathcal H_n}(\Omega_{k-1}^c(i))\\
	\leq &\mathbb E_{\mathcal H_n}[E_{k}^{(1)}(i)\mid \Omega_{k-1}(i)]+O(n^{-1-\gamma})=O(\log^2(n)\alpha^{2l}/n).
	\end{align*}
 Taking the union bound, the probability that there is  a type 1 hyperedge in the $l$-neighborhood of $i$ is 
 \begin{align*}
 \mathbb P_{\mathcal H_n}\left(\bigcup_{k=1}^{l}\{E_k^{(1)}(i)\geq 1\}\right)
 &\leq \sum_{k=1}^l\mathbb P_{\mathcal H_n}(\{E_k^{(1)}(i)\geq 1\})=O(\log^3(n)\alpha^{2l}/n).
 \end{align*}

 The number of hyperedge pair $(e_1,e_2)$ of Type 2 is stochastically dominated by 
	$$\textnormal{Bin}\left(nS_{k-1}^2{{n}\choose{d-2}}^2,\left(\frac{a\vee b }{{{n}\choose{d-1}}}\right)^2\right), $$  
	which conditioned on $\Omega_{k-1}(i)$ has expectation $O(\log^2(n)\alpha^{2l}/n)$. By a Markov's inequality and a union bound, in the same way as the proof for Type 1, we have the probability there is a type 2 hyperedge pair in the $l$-neighborhood of $i$ is $O(\log^2(n)\alpha^{2l}/n)$. Altogether the probability that there are at least one cycles within the  $l-$neighborhood of $i$  is $O(\log^3(n)\alpha^{2l}/n)$.

	Let $Z_i$ be the random variable such that $Z_i=1$ if $l$-neighborhood of $i$ contains one cycle and $Z_i=0$ otherwise.
	From the analysis above, we have $\mathbb E[Z_i]=O(\log^3(n)\alpha^{2l}/n).$ 
By Markov's inequality, 
	\begin{align*}
	&\mathbb P_{\mathcal H_n}\left(\sum_{i\in [n]} {Z_i} \geq \alpha^{2l}\log^4(n)\right)\leq \frac{\sum_{i}\mathbb E[Z_i]}{\log^4(n)\alpha^{2l}}
		= \frac{O(\log^3(n)\alpha^{2l})}{\alpha^{2l}\log^4(n)}=O(\log^{-1}(n)).
	\end{align*} 
	Then asymptotically almost surely the number of vertices whose $l$-neighborhood contains one cycle at most $\log^4(n)\alpha^{2l}$. 
It remains to show $H$ is $l$-tangle free asymptotically almost surely.  For a fixed vertex $i\in[n]$, there are several possible cases where there can be two cycles  in $V_{\leq l}(i)$.

(1) There is  one hyperedge of Type 1 or a hyperedge pair of Type 2 which creates more than one cycles. We discuss in the following cases conditioned on  the event $\cap_{t=1}^l \Omega_t(i)$.
		\begin{enumerate}[(a)]
			\item The number of hyperedge of the first type which connects to more than two vertices in $V_{k-1}$ is stochastically dominated by 
	$\textnormal{Bin}\left({{S_{k-1}}\choose{3}}{{n}\choose{d-3}},\frac{a\vee b }{{{n}\choose{d-1}}}\right).$ 
	The expectation is at most $O(\alpha^{3l}\log^3(n)/n^2)$.
	\item  If the intersection of the hyperedge pair of Type 2 contains  $2$ vertices in $V_{\geq k}$, it will create two cycles. The number of such hyperedge pairs is stochastically dominated by 		
		$\textnormal{Bin}\left({n\choose 2}S_{k-1}^2{{n}\choose{d-3}}^2,\left(\frac{a\vee b }{{{n}\choose{d-1}}}\right)^2\right)$ 
		with mean $O(\log^2 (n)\alpha^{2l}/n^2)$.
		\end{enumerate}
		
Then by Markov's inequality and a union bound, asymptotically almost surely, there is no $V_{\leq l}(i)$ such that its neighborhood contains Type 1 hyperedges or Type 2 hyperedge pairs which create more than one cycles. 

(2) The remaining case is that there is a $V_{\leq l}(i)$ where  two cycles are created by two Type 1 hyperedges or two Type 2  hyperedge pairs or one Type 1 hyperedge and another hyperedge pairs. By the same argument, under the event $\cap_{t=1}^l \Omega_t(i)$, the probability that  such event happens is $O(\log^6(n)\alpha^{4l}/n^2)$. 
Since $\alpha^{4l}=o(n)$, by taking a union bound over $i\in [n]$, we have $H$ is $l$-tangle-free asymptotically almost surely.
\end{proof}

\subsection{Proof of Lemma \ref{TangleCount}}\label{sec:TangleCount}\label{A5}
\begin{proof}
	Let $i\not\in \mathcal B$ whose $l$-neighborhood contains no cycles. For any $k\in[n]$ and any $m\leq l$, there is a unique self-avoiding walk of length $m$ from $i$ to $k$ if and only if $d(i,k)=m$, so we have $B_{ik}^{(m)}=\mathbf{1}_{d(i,k)=m}.
	$ For such $i$ we have
	$$
	(B^{(m)}\mathbf 1)_i=S_m(i),\quad 
	(B^{(m)}\sigma)_i=D_m(i).	
	$$
 Then \eqref{Be}, \eqref{Bs} follows from Theorem \ref{quasi}. By Lemma \ref{tangle}, asymptotically almost surely all vertices in $\mathcal B$ have only one cycle in $l$-neighborhood. For any $m\leq l, i\in \mathcal B$, since 
$(B^{(m)}\mathbf 1)_i=\sum_{k\in [n]}B_{ik}^{(m)}
$, and 
only vertices at distance at most $m$ from $i$ can be reached by a self-avoiding walk of length $m$ from $i$, which will  be counted in $(B^{(m)}\mathbf{1})_i$. Moreover, for any $k\in [n]$ with $B_{ik}^{(m)}\not=0$, since the $l$-neighborhood of $i$ contains at most one cycle, there are at most 2 self-avoiding walks of length $m$ between $i$ and $k$. Altogether we know $$\sum_{k\in [n]}B_{ik}^{(m)}\leq 2\sum_{t=0}^m S_t(i)=O(\alpha^m \log n)$$ asymptotically almost surely.  Then \eqref{tanglecount} follows.
\end{proof}

\subsection{Proof of Lemma \ref{lem:martingale}}
\begin{proof}
Recall the definitions of $\alpha,\beta$ from \eqref{alphabeta}. From \eqref{Pois1}-\eqref{Pois3},
\begin{align*}
	\mathbb E(W_{t+1}^{+}|\mathcal G_t)&=\sum_{r=0}^{d-1}r\mathbb E(W_{t+1}^{(r)}|\mathcal G_t)=\sum_{r=1}^{d-2} r\left(\frac{b{{d-1}\choose{r}} }{2^{d-1}}(W_{t}^{-}+W_{t}^{+})\right)+(d-1)\left(\frac{a}{2^{d-1}}W_{t}^{+} + \frac{b}{2^{d-1}}W_{t}^{-}\right)\\
	&=\frac{\alpha+\beta}{2} W_t^+  +\frac{\alpha-\beta}{2}W_t^-=\frac{\alpha^{t+1}}{2}M_t+\frac{\beta^{t+1}}{2}\Delta_t.
\end{align*} 
Similarly,
$ 
	\mathbb E[W_{t+1}^{-}|\mathcal G_t]=\frac{\alpha^{t+1}}{2}M_t-\frac{\beta^{t+1}}{2}\Delta_t.
$
Therefore 
\begin{align*}
\mathbb E[M_{t+1} \mid \mathcal G_t]= \alpha^{-t-1} \mathbb E[W_{t+1}^+ +W_{t+1}^- \mid \mathcal G_t]=M_t,\\
	\mathbb E[\Delta_{t+1} \mid \mathcal G_t]= \beta^{-t-1} \mathbb E[W_{t+1}^+ -W_{t+1}^- \mid \mathcal G_t]=\Delta _t.
\end{align*}
It follows that $\{M_t\}, \{\Delta_t\}$ are martingales with respect to $\mathcal G_t$. 
From \eqref{Pois1}-\eqref{Pois5},
\begin{align*}
	\textnormal{Var}(M_t|\mathcal G_{t-1})&=\textnormal{Var}(\alpha^{-t}(W_t^++W_t^- )|\mathcal G_{t-1})=\alpha^{-2t}\textnormal{Var}\left((d-1)\sum_{r=0}^{d-1}W_t^{(r)}|\mathcal G_{t-1}\right)\\
	 &=(d-1)^2\alpha^{-2t}\cdot \frac{\alpha}{d-1} (W_{t-1}^+ +W_{t-1}^-)=(d-1)\alpha^{-t}M_{t-1}.
\end{align*} 
Sine $\mathbb EM_0=1$,  by conditional variance formula,
\begin{align*}
\textnormal{Var}(M_t)&=\textnormal{Var}(\mathbb E[M_t|\mathcal G_{t-1}])+\mathbb E\textnormal{Var}(M_t|\mathcal G_{t-1})=\textnormal{Var}(M_{t-1})+(d-1)\alpha^{-t}.
\end{align*}
Since $\textnormal{Var}(M_0)=0$, we have for $t\geq 0$, $ 
	\textnormal{Var}(M_t)=(d-1)\frac{1-\alpha^{-t}}{\alpha-1}
$.
So  $\{M_t\}$ is uniformly integrable for $\alpha>1$. Similarly, 
\begin{align*}
	\textnormal{Var}(\Delta_t|\mathcal G_{t-1})&=\textnormal{Var}(\beta^{-t}(W_t^+-W_t^- )|\mathcal G_{t-1})=\beta^{-2t}\sum_{r=0}^{d-1}(2r-d+1)^2\textnormal{Var}(W_t^{(r)}|\mathcal G_{t-1}) \\
	 &=(\alpha/\beta^2)^t M_{t-1}(d-1)\alpha^{-1}\cdot  \frac{(d-1)a+(2^{d-1}+1-d)b}{2^{d-1}}=:\kappa(\alpha/\beta^2)^t M_{t-1},
\end{align*}
where $  \kappa:=\frac{(d-1)(a-b)+2^{d-1}b}{a+(2^{d-1}-1)b}$. And we also have the following recursion: 
\begin{align*}
\textnormal{Var}(\Delta_t)&=\textnormal{Var}(\mathbb E[\Delta_t|\mathcal G_{t-1}])+\mathbb E\textnormal{Var}(\Delta_t|\mathcal G_{t-1})=\textnormal{Var}(\Delta_{t-1})+\kappa \beta^{-2t}\alpha^t.
\end{align*}
Since $\textnormal{Var}(\Delta_{0})=0$, we have for $t>0$, 
\begin{align}\label{Var}
\textnormal{Var}(\Delta_{t})=\kappa\cdot \frac{1-(\beta^2/\alpha)^{-t}}{\beta^2/\alpha-1}.	
\end{align}
So  $\{\Delta_t\}$ is uniformly integrable if $\beta^2>\alpha$.  
From the martingale convergence theorem, $\mathbb E \Delta_{\infty}=\Delta_0=1$, $\textnormal{Var}(\Delta_{\infty})=\frac{\kappa}{\beta^2/\alpha-1}$, and \eqref{l2} holds. This finishes the proof.
\end{proof}

\subsection{Proof of Lemma \ref{thresholding}} \label{sec:thresh}
\begin{proof}
	From Theorem \ref{coupling2}, For each $i\in [n]$, there exists a coupling such that with probability $1-O(n^{-\epsilon})$ for some positive $\epsilon$,  $  \beta^{-l}\sigma(i)D_l(i)=\Delta_{l}$ and we denote this event by $\mathcal C$. When the coupling fails, by Theorem \ref{quasi}, $\beta^{-l}\sigma(i)D_l(i)=O(\log(n))$ with probability $1-O(n^{-\gamma})$ for some $\gamma>0$. 	Recall the event
	\begin{align}	\Omega_{k-1}(i):&=\{S_{k-1}(i)\leq C\log (n)
	\alpha^{k-1}\}.
	\end{align} 
	We define 
	$  \Omega:=\bigcap_{i=1}^n \Omega(i),  \Omega(i):=\bigcap_{k\leq l}\Omega_k(i).$ We have 
	\begin{align}\label{mean}
		\mathbb E\left(\frac{1}{n}\sum_{i=1}^n \beta^{-2l}D_l^2(i)\mid \Omega \right)=O(\log^2(n))n^{-\epsilon}+\mathbb E(\Delta_l^2\mathbf{1}_{\mathcal C} \mid \Omega).
	\end{align}
	Moreover,
\begin{align}
|\mathbb E(\Delta_l^2\mathbf{1}_{\mathcal C}|\Omega)-\mathbb E(\Delta_{\infty}^2)|
=&\left| \frac{\mathbb E(\Delta_l^2\mathbf{1}_{\mathcal C}-\mathbb E(\Delta_l^2\mathbf{1}_{\mathcal C}\mathbf{1}_{\overline{\Omega}})-\mathbb P(\Omega)\mathbb E(\Delta_{\infty}^2)}{\mathbb P(\Omega)}\right| \notag\\
\leq &\frac{|\mathbb E(\Delta_l^2-\Delta_{\infty}^2)|}{\mathbb P(\Omega)}+\frac{1-\mathbb P(\Omega)}{\mathbb P(\Omega)}\mathbb E(\Delta_{\infty}^2)+\frac{|\mathbb E(\Delta_l^2\mathbf{1}_{\overline{\mathcal C}})-\mathbb E(\Delta_l^2\mathbf{1}_{\mathcal C\cap \overline{\Omega}})|}{\mathbb P(\Omega)}.\label{3sum}
\end{align}

Since we know $\mathbb P(\Omega\cap \mathcal C)\to 1$ and  \eqref{l2},  the first two terms in \eqref{3sum} converges to $0$. The third term also converges to $0$ by dominated convergence theorem. So we have
\begin{align*}
		\mathbb E\left(\frac{1}{n}\sum_{i=1}^n \beta^{-2l}D_l^2(i)\mid \Omega \right)\to \mathbb E(\Delta_{\infty}^2).
	\end{align*}
	
	We then estimate the second moment. Note that
\begin{align} 
\mathbb E\left(\frac{1}{n}\sum_{i=1}^n \beta^{-2l}D_l^2(i)\mid \Omega \right)^2 
=&\frac{1}{n^2}\mathbb E\left(\sum_{i=1}^n\beta^{-4l}D_l^4(i) \mid \Omega\right)+\frac{2}{n^2}\sum_{i<j}\beta^{-4l}\mathbb E(D_l(i)^2D_l^2(j) \mid \Omega), \label{varianceSUM}
\end{align}
and from Theorem \ref{quasi}, the first term   is $O(\log^4(n)/n)=o(1)$. Next, we show the second  term  satisfies
\begin{align}
\frac{2}{n^2}\sum_{i<j}\beta^{-4l}\mathbb E(D_l(i)^2D_l^2(j) \mid \Omega)=\frac{2}{n^2}\sum_{i<j}\beta^{-4l}\frac{1}{\mathbb P(\Omega)}\mathbb E(\mathbf{1}_{\Omega}D_l(i)^2D_l^2(j))=o(1).\label{varianceSUM2}
\end{align}  
Since $\mathbb P(\Omega)=1-O(n^{-\gamma})$, it suffices to show

$$\frac{2}{n^2}\sum_{i<j}\beta^{-4l}\mathbb E(\mathbf{1}_{\Omega}D_l(i)^2D_l^2(j))=o(1).
$$

Consider 
$ 
\beta^{-4l}\mathbb E(\mathbf{1}_{\Omega(i)\cap\Omega(j)}D_l^2(i)D_l^2(j)).
$
From Lemma \ref{approxind}, when $i\not=j$, $D_l(i), D_l(j)$ are asymptotically independent. On the event that the coupling with independent copies fails (recall the failure probability is $O(n^{-\gamma})$), we bound $D_l^2(i)D_l^2(j)$ by $O(\beta^{4l}\log^4(n))$. When the coupling succeeds,  $$  \beta^{-4l}\mathbb E(\mathbf{1}_{\Omega(i)\cap\Omega(j)}D_l(i)^2D_l^2(j))=  \beta^{-4l}\mathbb E(\mathbf{1}_{\Omega(i)}D_l(i)^2)\mathbb E(\mathbf{1}_{\Omega(j)}D_l(j)^2).$$ Then from \eqref{inprob},
\begin{align}
 \frac{2}{n^2}	\sum_{i<j}\beta^{-4l}\mathbb E(\mathbf{1}_{\Omega(i)\cap\Omega(j)}D_l(i)^2D_l^2(j))\
=&O\left(\frac{1}{n^2}\sum_{i<j}\beta^{-4l}\mathbb E(\mathbf{1}_{\Omega(i)}D_l(i)^2)\mathbb E(\mathbf{1}_{\Omega(j)}D_l(j)^2)+O(n^{-2\gamma}\log^4n)\right) \notag\\
=&O\left((\mathbb E(\Delta_{\infty}^2))^2 \right)=O(1).\label{varianceSUM3}
\end{align}

Therefore from \eqref{varianceSUM}, \eqref{varianceSUM2}, and \eqref{varianceSUM3}, $$\mathbb E\left(\frac{1}{n}\sum_{i=1}^n \beta^{-2l}D_l^2(i)\mid \Omega \right)^2=O(1).$$

With \eqref{mean}, by Chebyshev's inequality, conditioned on $\Omega$, in probability we have
$$\lim_{n\to\infty}\frac{1}{n}\sum_{i=1}^n \beta^{-2l}D_l^2(i)=\mathbb E(\Delta_{\infty}^2).
$$ Since $\mathbb P(\Omega)\to 1$, \eqref{inprob} follows. 

We now establish \eqref{indicator}. Without loss of generality, we discuss the case of $+$ sign. Since $\tau$ is a continuous point of the distribution of $\Delta_{\infty}$, for any fixed $\delta>0$, we can find two bounded $K$-Lipschitz function $f,g$ for some constant $K>0$ such that 
$$f(x)\leq (\mathbf{1}_{x\geq\tau})\leq g(x), x\in\mathbb R,\quad 
0\leq \mathbb E(g(\Delta_{\infty})-f(\Delta_{\infty}))\leq \delta.
$$
Consider the empirical sum $  \frac{1}{n}\sum_{i\in \mathcal N^+}f(x^{(n)}_i\sqrt{n\mathbb E(\Delta_{\infty}^2})$, we have 
\begin{align*}
	&\left|\frac{1}{n}\sum_{i\in \mathcal N^+}f(x_i^{(n)}\sqrt{n\mathbb E\Delta_{\infty}^2})-\frac{1}{n}\sum_{i\in \mathcal N^+}f(\beta^{-l}D_l(i))\right|\\
	\leq &\frac{K}{n}\sum_{i\in\mathcal N^+}|(x_i^{(n)}-y_i^{(n)})\sqrt{n\mathbb E\Delta_{\infty}^2}|+\frac{K}{n}\sum_{i\in\mathcal N^+}|y_i^{(n)}\sqrt{n\mathbb E\Delta_{\infty}^2}-\beta^{-l}D_l(i)|.
\end{align*}
The first term converges to $0$ by the assumption that $\|x-y\|_2\to 0$ in probability. The second term converges to $0$ in probability from \eqref{inprob}. Moreover,
$\frac{1}{n}\sum_{i\in \mathcal N^+}f(\beta^{-l}D_l(i))$ converges in probability to $\frac{1}{2}\mathbb Ef(\Delta_{\infty})$. So we have 
\begin{align*}
\lim_{n\to\infty}\frac{1}{n}\sum_{i\in \mathcal N^+}f(x_i^{(n)}\sqrt{n\mathbb E\Delta_{\infty}^2})=\frac{1}{2}\mathbb Ef(\Delta_{\infty}), 
\end{align*} and the same holds for $g$. If follows that 
\begin{align*}
\limsup_{n\to\infty}\left|\frac{1}{n}\sum_{i\in[n]:\sigma_i=+}\mathbf{1}_{\left\{x_i^{(n)}\geq\tau/\sqrt{n\mathbb E[\Delta_{\infty}^2]}\right\}}-\frac{1}{2}\mathbb P(\Delta_{\infty}\geq \tau)\right|\leq \delta
\end{align*}
for any $\delta>0$. Therefore \eqref{indicator} holds.
\end{proof}

\subsection{Proof of Lemma \ref{lemma:bound_on_gamma}}\label{sec:appendix_moment}
\begin{proof}
     For any $n\times n$ real matrix $M$, we have $\rho(M)^{2k}\leq \textnormal{tr}[(MM^{\top})^k]$, therefore
\begin{align}\label{gammalm}
\mathbb E_{\mathcal H_n}[\rho(\Gamma^{(l,m)})^{2k}] &\leq \mathbb E_{\mathcal H_n}\left[\textnormal{tr}\left(\Gamma^{(l,m)}{\Gamma^{(l,m)}}^{\top}\right)^k\right] \\
 &=\sum_{i_1,\dots, i_{2k}\in [n]}\mathbb E_{\mathcal H_n} \left[\Gamma_{i_1 i_2}^{(l,m)}\Gamma_{i_3 i_2}^{(l,m)}\dots \Gamma_{i_{2k-1}i_{2k}}^{(l,m)}\Gamma_{i_{1}i_{2k}}^{(l,m)}\right]. \notag
\end{align}

 Recall the definition of $\Gamma^{(l,m)}_{ij}$ from \eqref{Gammaij}, the sum in \eqref{gammalm} can be expanded to be the sum over all circuits $w=(w_1,\dots w_{2k})$ of length $2kl$ which are obtained by concatenation of $2k$ walks of length $l$, and each $w_i, 1\leq i\leq 2k$ is a concatenation of two self-avoiding walks of length  $l-m$ and $m-1$. The weight that each hyperedge in the circuit contributes  can be either $ A_{ij}^e-\overline{A^e_{ij}}, \overline{A^e_{ij}}$ or $A_{ij}^e$. For all circuits $w$ in \eqref{gammalm} with nonzero expected weights, there is an extra constraint that  each $w_i$ intersects with some other $w_j$, otherwise the expected weight that $w_i$ contributes to the sum \eqref{gammalm} will be $0$. We want to bound the number of such circuits with nonzero expectation.

Let $v,h$ denoted the number of distinct vertices and hyperedges traversed by the circuit. Here we don't count the hyperedges that are weighted by $\overline{A_{ij}^e}$. We associate a multigraph $G(w)$ for each $w$ as before, but the hyperedges with weight $\overline{A_{ij}^e}$ are not included. Since $\mathbb E_{\mathcal H_n}[\Gamma_{ij}^{(l,m)}]=0$ for any $i,j\in[n]$, if the expected weight of $w$ is nonzero, the corresponding graph  $G(w)$ must be connected.

We detail the proof for circuits in Case (1), where \begin{itemize}
    \item each hyperedge label in $\{e_i\}_{1\leq i\leq h}$ appears exactly once on $G(w)$; 
    \item vertices in $e_i\setminus \textnormal{end}(e_i)$ are all distinct for $1\leq i\leq h$, and they are not vertices with labels in $V(w)$,
\end{itemize} 
and the cases from other circuits follow similarly following the proof of Lemma \ref{expectboundmoment}.

 Let $m$ be fixed. For each circuit $w$, there are $4k$  self-avoiding walks, and each $w_i$ is broken into two   self-avoiding walks of length $m-1$ and $l-m$ respectively. We adopt the way of encoding each self-avoiding walk as before, except that we must also include the labels of the endpoint $j$  after the traversal of an edge $e$ with weight from $\overline{A_{ij}^e}$, which gives us the initial vertex of the self-avoiding walk of length $l-m$ within each $w_i$. These extra labels tell us how to concatenate the two self-avoiding walks of length $m-1$ and $l-m$ into the walk $w_i$ of length $l$. For each $w_i$,  label is encoded by a number from $\{1,\dots, v\}$. So all possible such labels can be bounded by $v^{2k}$. 
 Then the upper bound on the number of valid triplet sequences with extra labels for fixed $v,h$ is now given by 
$
 v^{2k}[(v+1)^2(l+1)]^{4k(2+h-v)}$.

 The total number of circuits that have the same triplet sequences with extra labels is at most $ n^v{{n}\choose{d-2}}^{h+2k}$
    where $h+2k$ is the total number of distinct hyperedges we can have in $w$, including the hyperedges with weights from $\overline{A_{ij}^e}$. 
    Combining the two estimates above, the number of all circuits $w$ with given $v,h$ is upper bounded by 
  $
    	 n^v{{n}\choose{d-2}}^{h+2k}v^{2k}[(v+1)^2(l+1)]^{4k(2+h-v)}.
   $
   
   We also need to bound the possible range of $v,h$. There are overall $2k(l-1)$ hyperedges traversed in $w$  (remember we don't count the edges with weights from $\overline{A_{ij}^e}$). Out of these, $2k(l-m)$ hyperedges (with multiplicity) with weights coming from $A_{ij}^e-\overline{A_{ij}^e}$ must be at least doubled for the expectation not to vanish.  Then the number of distinct hyperedges in $w$  excluding the hyperedge weighted by some $\overline{A_{ij}^e}$,  satisfies 
$h\leq k(l-m)+(2k(l-1)-2k(l-m))=k(l+m-2).$

We have  $v\geq \max\{m,l-m+1\}$ since each self-avoiding walk of length $m-1$ or $l-m$ has distinct vertices. 
Moreover, since $G(w)$ is connected, $h\geq v-1$, so we have $v-1\leq h\leq k(l+m-2)$. And the range of $v$ is then given by  $ \max\{m,l-m+1\}\leq v\leq k(l+m-2)+1$. 

The expected weight that a circuit contributes can be  estimated similarly as before. From \eqref{treebound}, the expected weights from $v-1$ many hyperedges that corresponds to edges on $T(w)$ is bounded by $ \left(\frac{\alpha}{(d-1){{n}\choose{d-1}}}\right	)^{v-1}$. Similar to \eqref{cyclebound}, the expected weights from $h-v+1+2k$ many hyperedges that corresponds to edges on $G(w)\setminus T(w)$ together with hyperedges whose weights are from $\overline{A_{ij}^e}$ is bounded by $ \left(\frac{a\vee b}{{{n}\choose{d-1}}}\right)^{h-v+1+2k}$. 
Putting all estimates together, for fixed $v,h$, the total contribution to the sum is bounded by 
\begin{align*}
	&n^v{{n}\choose{d-2}}^{h+2k} v^{2k}[(v+1)^2(l+1)]^{4k(2+h-v)} \left(\frac{\alpha}{(d-1){{n}\choose{d-1}}}\right	)^{v-1}\left(\frac{a\vee b}{{{n}\choose{d-1}}}\right)^{h-v+1+2k}\\
	=&n^v\left(\frac{\alpha}{d-1}\right	)^{v-1}\left(\frac{d-1}{n-d+2}\right)^{h+2k}  v^{2k}~ Q(k,l,v,h),	   
\end{align*}
where $Q(k,l,v,h):=[(v+1)^2(l+1)]^{4k(2+h-v)}\left(a\vee b \right)^{h-v+1+2k}.$ Let $S_1$ be the contribution of circuits in Case (1) to the sum in \eqref{gammalm}.
Summing over all possible $v$ and $h$, we have
\begin{align}\label{eq:A10}
S_1\leq &\sum_{v=m\vee (l-m+1)}^{k(l+m-2)+1}\sum_{h=v-1}^{k(l+m-2)} n^v\left(\frac{\alpha}{d-1}\right	)^{v-1}\left(\frac{d-1}{n-d+2}\right)^{h+2k}  v^{2k}~ Q(k,l,v,h). 	
\end{align}
Taking $l=O(\log n)$, similar to the discussion in \eqref{ratio},   the leading term in \eqref{eq:A10} is given by the term with $h=v-1$. So  for any $1\leq m\leq l$, and sufficiently large $n$, there are constants $C_1,C_2>0$ such that 
\begin{align*} 
	S_1 
	\leq & \sum_{v=m\vee (l-m+1)}^{k(l+m-2)+1}2n^{1-2k}((d-1)v)^{2k}[(v+1)^2(l+1)]^{4k}\alpha^{v-1}\left(a\vee b \right)^{2k}\\
	\leq & 2\sum_{v=m\vee (l-m+1)}^{k(l+m-2)+1}n^{1-2k}\alpha^{v-1}[(v+1)^5(l+1)^2(d-1)(a\vee b)]^{2k}
	\\
	\leq & C_1\log^{14k} (n)\cdot n^{1-2k}\sum_{v=m\vee (l-m+1)}^{k(l+m-2)+1}\alpha^{v-1}
	 \leq C_2\log^{14k} (n)\cdot n^{1-2k}\alpha^{k(l+m-2)}.
		\end{align*}	
For circuits not in Case (1), similar to the proof of Lemma 	\ref{expectboundmoment}, their total contribution is bounded by $C_2'  n^{1-2k}\alpha^{k(l+m-2)}\log^{14k}n$ for a constant $C_2'>0$. This completes the proof of Lemma \ref{lemma:bound_on_gamma}.
 \end{proof}

 \subsection{Proof of Lemma \ref{Cor412}}
\label{A6}

\begin{proof}
Let $\mathcal B$ be the set of vertices such that their $l$-neighborhood contains a cycle. Let $x$ be a normed vector such that $x^{\top}B^{(l)}\mathbf{1}=0$. We then have
\begin{align}
\mathbf{1}^{\top}B^{(m-1)}x=&\sum_{i\in [n]}x_i(B^{(m-1)}\mathbf{1})_i
              =\sum_{i\not\in \mathcal B} x_iS_{m-1}(i)+\sum_{i\in\mathcal B}x_i(B^{m-1}\mathbf{1})_i \notag\\
              =&\sum_{i\in [n]}x_i(\alpha^{m-1-l}(B^{(l)}\mathbf{1})_i+{O}(\alpha^{\frac{m-1}{2}}\log n)) \notag\\
              &-\sum_{i\in \mathcal B}x_i(\alpha^{m-1-l}(B^{(l)}\mathbf{1})_i+{O}(\alpha^{\frac{m-1}{2}}\log n)) 
                          +\sum_{i\in \mathcal B}x_i(B^{(m-1)}\mathbf{1})_i.\label{lastterm}
\end{align}

Since we have $\mathbf{1}^{\top}B^{(l)}x=0$, the first term in \eqref{lastterm} satisfies 
\begin{align*}
 \left|\sum_{i\in [n]}x_i(\alpha^{m-1-l}(B^{(l)}\mathbf{1})_i+ {O}(\alpha^{\frac{m-1}{2}}\log n))\right|
=&\left|\sum_{i\in[n]}x_i {O}(\alpha^{\frac{m-1}{2}}\log n)\right|= {O}(\sqrt{n}\alpha^{\frac{m-1}{2}}\log n),
\end{align*} where the last inequality above is from Cauchy inequality.  From Lemma \ref{tangle}, $|\mathcal B|={O}(\alpha^{2l}\log^4 n)$.
For the second term in \eqref{lastterm},   recall from \eqref{tanglecount}, for $m\leq l$, $|(B^{(m)}\mathbf{1})_i|=O(\alpha^{m}\log n)$, then by Cauchy inequality
\begin{align*}
	\left|\sum_{i\in \mathcal B}x_i(\alpha^{m-1-l}(B^{(l)}\mathbf{1})_i+ {O}(\alpha^{\frac{m-1}{2}}\log n))\right|
	\leq & \sqrt{|\mathcal B|}{O}(\alpha^{m-1}\log n)={O}(\alpha^{l+m-1}\log^3 n).
\end{align*}

Similarly, the third term satisfies 
\begin{align*}
|\sum_{i\in \mathcal B}x_i(B^{(m-1)}\mathbf{1})_i|={O}(\alpha^{l+m-1}\log^3 n).	
\end{align*}
Note that $\alpha^{l+m-1}=o(n^{1/2}),$ altogether we have 
\begin{align}
|\mathbf{1}^{\top}B^{(m-1)}x|={O}(\sqrt{n}\alpha^{\frac{m-1}{2}}\log n+\alpha^{l+m-1}\log^3 n)={O}(\sqrt{n}\alpha^{\frac{m-1}{2}}\log n).	
\end{align} \eqref{bound1} then follows. Using the property $x^{\top}B^{(l)}\sigma=0$ instead of $x^{\top}B^{(l)}\mathbf{1}=0$ and following the same argument, \eqref{bound2} holds.
\end{proof}

\subsection{Proof of Lemma \ref{isom}}\label{A7}
\begin{proof}
Conditioned  on $(H,i,\sigma)_{t-1}\equiv (T,\rho,\tau)_{t-1}$, if $A_t$ holds, it implies that hyperedges generated from vertices in $V_{t-1}$ do not overlap (except for the parent vertices in $V_{t-1}$). If $B_t$ holds, vertices in $V_t$ that are in different hyperedges generated from $H_{t-1}$ do not connect to each other. If both $A_t$ $B_t$ holds, $(H,i,\sigma)_t$ is still a hypertree. Since $X_v^{(r)}=Y_{\phi(v)}^{(r)}$ for $v\in V_{t-1}$, we can extend the hypergraph isomorphism $\phi$ by mapping the children of $v\in V_t$ to the corresponding vertices in the $t$-th generation of children of $\rho$ in $T$, which keeps the hypertree structure and the spin of each vertex.
\end{proof}

\subsection{Proof of Lemma \ref{AtBt}}\label{A8}
\begin{proof}
	First we fix $u,v\in V_{t}$. For any $w\in V_{>t}$, the probability that $(u,w), (v,w)$ are both connected is $O(n^{-2})$. We know $|V_{>t}|\leq n$ and $|V_{\leq t}|=O(\log^2(n)\alpha^t)$ conditioned on $C_{t}$. Since $\alpha^{2t}\leq \alpha^{2l}=o(n^{1/2})$, taking a union bound over all $u,v,w$ we have 
	\begin{align}
	\mathbb P(A_t |C_{t})\geq 1-O(\log^4(n)\alpha^{2t}n^{-1})=1-o(n^{-1/2}).	
	\end{align}
	
 For the second claim, the probability of having an edge between $u,v\in V_t$ is $O(n^{-1})$. Taking a union bound over all pairs of $u,v\in V_{t}$ implies
 \begin{align}
 \mathbb P(B_t |C_{t})\geq 1-O(\log^4(n)\alpha^{2t}n^{-1})=1-o(n^{-1/2}).	
 \end{align}
\end{proof}

\subsection{Proof of Lemma \ref{rama1}} \label{A11}
\begin{proof}
	In \eqref{B1}, the coordinates of two vectors on the left hand side agree at $i$ if the $l$-neighborhood of $l$ contains no cycle. Recall $\mathcal B$ is the set of vertices whose $l$-neighborhood contains a cycle, from Lemma \ref{tangle}, and \eqref{tanglecount}, we have asymptotically almost surely,
	\begin{align}\label{on}
	\|B^{(l)}\mathbf{1}-\vec{S}_l\|_2\leq \sqrt{|\mathcal B|}O(\log (n)\alpha^l)=O(\log^3(n)\alpha^{2l})=o(\sqrt{n}).	
	\end{align}
	
	From \eqref{inprob} we have 
	\begin{align}\label{DL}
	\|\vec{D}_l\|_2=\Theta(\sqrt{n}\beta^l)\end{align} asymptotically almost surely, and  $\|B^{(l)}\mathbf{1}\|_2\geq \|\vec{D}_l\|_2$, therefore \eqref{B1} follows.  Similar to \eqref{on}, we have 
	\begin{align}
		\|B^{(l)}\sigma-\vec{D}_l\|_2=o(\sqrt{n}),\quad 
		 \|B^{(l)}\sigma\|_2 = \|\vec{D}_l\|_2 + o(\sqrt{n})=\Theta(\sqrt{n}\beta^l).\label{Blsigma}
	\end{align}
	Then \eqref{B2} follows. It remains to show \eqref{B3}. Using the same argument as in Theorem \ref{thresholding}, we have the following convergence in probability 
	\begin{align}
	\lim_{n\to\infty}\frac{1}{n}\sum_{i\in[n]}\alpha^{-2l}S_l^2(i)&=\mathbb EM_{\infty}^2,	\label{Sl}
	\end{align}
where $M_{\infty}$ is the limit of the martingale $M_t$. Similarly, the following convergences in probability hold
\begin{align*}
\lim_{n\to\infty}\frac{1}{n}\sum_{i\in[n]}\alpha^{-l}\beta^{-l}S_l(i)D_l(i)
=&\lim_{n\to\infty}\frac{1}{n}\sum_{i\in\mathcal N^+}\alpha^{-l}\beta^{-l}S_l(i)D_l(i)+\lim_{n\to\infty}\frac{1}{n}\sum_{i\in\mathcal N^-}\alpha^{-l}\beta^{-l}S_l(i)D_l(i)\\
=&\frac{1}{2}\mathbb EM_{\infty}D_{\infty}-\frac{1}{2}\mathbb EM_{\infty}D_{\infty}=0.		
\end{align*}
Thus $\langle \vec{S}_l, \vec{D}_l\rangle=o(n\alpha^l\beta^l)$ asymptotically almost surely.  From \eqref{Sl} we have 
\begin{align}\label{Size}
\|\vec{S}_l\|_2=\Theta(\sqrt{n}\alpha^l),	
\end{align}
therefore together with \eqref{DL}, we have
$\|\vec{S}_l\|_2 \cdot \|\vec{D}_l\|_2=\Theta(n\alpha^l\beta^l).$
With \eqref{B1} and \eqref{B2},  \eqref{B3} holds. 
\end{proof}

\subsection{Proof of Lemma \ref{rama2}}\label{A12}
\begin{proof}
	For the lower bound in \eqref{Bound1}, note that $B^{(l)}$ is symmetric, we have
	\begin{align}
	\|B^{(l)}\mathbf{1}\|_2^2=\langle B^{(l)}\mathbf{1},B^{(l)}\mathbf{1}\rangle =\langle \mathbf{1}, B^{(l)}B^{(l)}\mathbf{1}\rangle\leq \|\mathbf{1}\|_2 \|B^{(l)}B^{(l)}\mathbf{1}\|_2.
	\end{align}
Therefore from \eqref{Size}  and \eqref{B1},
\begin{align}
	\|B^{(l)}B^{(l)}\mathbf{1}\|_2\geq \frac{\|B^{(l)}\mathbf{1}\|_2^2}{\|\mathbf{1}\|_2}=\Theta(\alpha^l)\|B^{(l)}\mathbf{1}\|_2.	
\end{align}

For the upper bound in \eqref{Bound1}, from  \eqref{claim1} and  \eqref{tanglecount},  the maximum row sum of $B^{(l)}$ is  $O(\alpha^l\log n)$. Since $B^{(l)}$ is nonnegative, the spectral norm  $\rho(B^{(l)})$ is bounded by the maximal row sum, hence \eqref{Bound1} holds. 
The lower bound in \eqref{Bound2} can be proved similarly as in \eqref{Bound1},  from the inequality 
$  \|B^{(l)}\sigma\|_2^2\leq \|\sigma\|_2 \|B^{(l)}B^{(l)}\sigma\|_2
$ together with  \eqref{DL}  and \eqref{B2}. 
Recall $\mathcal B$ is the set of vertices whose $l$-neighborhood contains  cycles. Let $\overline{\mathcal B}=[n]\setminus \mathcal B$.
Since $$\left(B^{(l)}B^{(l)}\sigma\right)_i=\sum_{j\in [n]}B_{ij}^{(l)}(B^{(l)}\sigma)_j,$$
we can decompose the vector $B^{(l)}B^{(l)}\sigma$ as a sum of three vectors $z+z'+z''$, where
\begin{align*}
z_i &:=\mathbf{1}_{\overline{\mathcal B}}(i)\sum_{j: d(i,j)=l}D_l(j)\mathbf{1}_{\overline{\mathcal B}}(j),	\quad 
z_i' :=\mathbf{1}_{\overline{\mathcal B}}(i)\sum_{j: d(i,j)=l}O(\alpha^l\log n)\mathbf{1}_{\mathcal B}(j),\\
z_i'' &:=\mathbf{1}_{{\mathcal B}}(i)O(\alpha^{2l}\log ^2n).  \end{align*}
The decomposition above depends on whether $i,j\in\mathcal B$ and the estimation follows from \eqref{tanglecount}.
From Lemma \ref{tangle}, $\mathcal B=O(\alpha^{2l}\log^4(n))$ asymptotically almost surely, so one has 
\begin{align*}
\|z'\|_2^2&=\sum_{i=1}^n (z_i')^2=\sum_{i\in \overline{\mathcal B}}\sum_{j: d(i,j)=l}\sum_{j': d(i,j')=l} O(\alpha^{2l}\log^2 n)\mathbf{1}_{\mathcal B}(j)\mathbf{1}_{\mathcal B}(j')\\
&=\sum_{j\in\mathcal B}\sum_{j'\in\mathcal B}\sum_{\substack{i\in\overline{\mathcal B}\\ d(i,j)=d(i,j')=l}}O(\alpha^{2l}\log^2 n)=\sum_{j,j'\in\mathcal B}O(\alpha^{3l}\log^3 n)=O(\alpha^{7l}\log^{11} n),\end{align*}
which implies  $\|z'\|_2=O(\alpha^{7l/2}\log^{11/2}n).$ And similarly $\|z''\|_2=O( \alpha^{3l}\log^2n).$
We know from \eqref{Blsigma}, $\|B^{(l)}\sigma\|_2=\Theta(\beta^l\sqrt n)$, and since $c\log\alpha<1/8$, we have $\alpha^{5l/2}=n^{-\gamma'}\sqrt n$ for some $\gamma'>0$, therefore
\begin{align}\label{zzprime}
	\|z'+z''\|_2=O(\alpha^{7l/2}\log^{11/2} n)=o(\alpha^{5l/2}\beta^{2l})=O(n^{-\gamma'}\beta^{l}\|B^{(l)}\sigma\|_2). 
\end{align}  

It remains to upper bound $\|z\|_2$. Assume the $2l$-neighborhood of $i$ is cycle-free, then the $i$-th entry of $B^{(l)}B^{(l)}\sigma$, denoted by $X_i$, can be written as 
\begin{align}
X_i:&=(B^{(l)}B^{(l)}\sigma)_i=\sum_{k=1}^n B_{ik}^{(l)}(B^{(l)}\sigma)_k
=\sum_{k=1}^n \mathbf{1}_{d(i,k)=l}\sum_{j=1}^n \mathbf{1}_{d(j,k)=l}\sigma_j \notag\\
&=\sum_{h=0}^l	\sum_{j: d(i,j)=2h}\sigma_j|\{k: d(i,k)=d(j,k )=l\}|.\label{expansionX}
\end{align}

We control the magnitude of $X_i$ in the corresponding hypertree growth process. Since $2l=2c\log n$ and $2c\log(\alpha)<1/4$, the coupling result in Theorem \ref{coupling2} can apply. 
Let $\mathcal C_i$ be the event that coupling between $2l$-neighborhood of $i$ with the Poisson Galton-Watson hypertree has succeeded and $n^{-\epsilon}$ be the failure probability of the coupling. When the coupling succeed, $z_i=X_i$, therefore
\begin{align}
\mathbb E (\|z\|_2^2\mid \Omega)&=\sum_{i\in [n]}n^{-\epsilon}O(\alpha^{2l}\beta^{2l}\log^2n)+\sum_{i\in [n]}\mathbb E (X_i^2\mathbf{1}_{\mathcal C_i}\mid \Omega)\notag\\
&=n^{1-\epsilon}O(\alpha^{2l}\beta^{2l}\log^2n)+\sum_{i\in [n]}\mathbb E (X_i^2\mathbf{1}_{\mathcal C_i}\mid \Omega).\label{expz}
\end{align}

For any $i,j\in [n], t\in [l]$, define 
$D_{i,j}^{(t)}:=|\{k: d(i,k)=d(j,k )=t\}|.$
From \eqref{expansionX}, we have 
\begin{align}\label{expandX2}
X_i^2=\sum_{h,h'=0}^l \sum_{j: d(i,j)=2h}\sum_{j':d(i,j')=2h'}\sigma_j\sigma_{j'}D_{i,j}^{(l)}D_{i,j'}^{(l)}.
\end{align}

We further classify the pair $j,j'$ in \eqref{expandX2} according to their distance. Let $d(j,j')=2(h+h'-\tau)$ for $\tau=0,\dots, 2(h\wedge h')$.  This yields
\begin{align*}
X_i^2
&=\sum_{h,h'=0}^l \sum_{\tau=0}^{2(h\wedge h')}\sum_{j:d(i,j)	=2h}\sum_{j':d(i,j')	=2h'}\mathbf{1}_{d(j,j')=2(h+h'-\tau)}\sigma_j \sigma_j'  D_{i,j}^{(l)}D_{i,j'}^{(l)}.\end{align*}

Conditioned on $\Omega$  and $\mathcal C_i$, similar to the analysis in Appendix H in \cite{massoulie2014community}, we have the following holds 
\begin{align}
|\{k: d(i,k)=d(j,k )=l\}| &=O(\alpha^{l-h}\log n),\label{estimatedistance1}\\	
|\{k': d(i,k') =d(j',k' )=l\}|&=O(\alpha^{l-h'}\log n),\\
|\{j: d(i,j) =2h\}|&=O(\alpha^{2h}\log n),\\
|\{j': d(i,j') =2h', d(j,j')=2(h+h'-\tau)\}|&=O(\alpha^{2h'-\tau }\log n).\label{estimatedistance4}
\end{align}

We claim that \begin{align}\label{keyclaim}
 	\mathbb E [\sigma_j \sigma_{j'}|\mathcal C_i]\leq \left(\frac{\beta}{\alpha}\right)^{d(j,j')-1},
 \end{align}
and  prove \eqref{keyclaim}  in Cases (a)-(d).

(a) Assume $j$ is the parent of $j'$ in the hypertree growth process. Then $d(j,j')=1$. Let $\mathcal T_r$ be the event that the hyperedge containing $j'$ is of type $r$. Given $\mathcal T_r$, by our construction of the hypertree process, the spin of $j'$ is assigned to be $\sigma_j$ with probability $\frac{r}{d-1}$ and $-\sigma_j$ with probability $\frac{d-1-r}{d-1}$, so we have 
\begin{align*}
\mathbb E[\sigma_j\sigma_{j'}\mid \mathcal C_i]&=\sum_{r=0}^{d-1}\mathbb E[\sigma_j\sigma_j'\mid \mathcal T_r, C_i]\mathbb P[\mathcal T_r\mid \mathcal C_i]=	\sum_{r=0}^{d-1}\left( \frac{r}{d-1}-\frac{d-1-r}{d-1}\right)\mathbb P[\mathcal T_r\mid \mathcal C_i].
\end{align*}
Recall $\mathbb P[\mathcal T_{d-1}\mid \mathcal C_i]=\frac{(d-1)a}{\alpha 2^{d-1}}$ and   $\mathbb P[\mathcal T_{r}\mid \mathcal C_i]=\frac{(d-1)b{ d-1 \choose r}}{\alpha 2^{d-1}}$ for $0\leq r\leq d-2$.
 A simple calculation implies 
$\mathbb E[\sigma_j\sigma_{j'}\mid \mathcal C_i]=\frac{\beta}{\alpha}\leq 1.
$

(b) Suppose $d(j,j')=t$ and there is a sequence of vertices $j, j_1,\dots, j_{t-1}, j'$ such that $j_1$ is a child of $j$, $j_i$ is a child of $j_{i-1}$ for $1\leq i\leq t$, and $j'$ is a child of $j_{t-1}$. We  show by induction that for $t\geq 1$, 
$\mathbb E[\sigma_j\sigma_{j'}\mid \mathcal C_i]= \left(\frac{\beta}{\alpha}\right)^t.$
When $t=1$ this has been approved in part (a). Assume it is true for all $j,j'$ with distance $\leq t-1$. Then when $d(j,j')=t$, we have 
\begin{align*}
\mathbb E[\sigma_j\sigma_{j'}\mid \mathcal C_i]=&
\mathbb E[\sigma_j\sigma_{j'}\mid \sigma_{j_1}=\sigma_j, \mathcal C_i]	\mathbb P(\sigma_{j_1}=\sigma_j\mid \mathcal C_i)+\mathbb E[\sigma_j\sigma_{j'}\mid \sigma_{j_1}=-\sigma_j, \mathcal C_i]	\mathbb P(\sigma_{j_1}=-\sigma_j\mid \mathcal C_i) \\
=&\left(\frac{\beta}{\alpha}\right)^{t-1}\mathbb P(\sigma_{j_1}=\sigma_j\mid \mathcal C_i)-\left(\frac{\beta}{\alpha}\right)^{t-1}\mathbb P(\sigma_{j_1}=-\sigma_j\mid \mathcal C_i)\\
=&\left(\frac{\beta}{\alpha}\right)^{t-1}\frac{\alpha+\beta}{2\alpha}-\left(\frac{\beta}{\alpha}\right)^{t-1}\frac{\alpha-\beta}{2\alpha}=\left(\frac{\beta}{\alpha}\right)^{t}.
\end{align*}
Therefore $\mathbb E[\sigma_j\sigma_{j'}\mid \mathcal C_i]\leq \left(\frac{\beta}{\alpha}\right)^{d(j,j')}\leq \left(\frac{\beta}{\alpha}\right)^{d(j,j')-1}$.
This completes the proof for part (b).

(c) Suppose $j,j'$ are not in the same hyperedge and there exists a vertex $k$ such that $j,k$ satisfies the assumption in Case (b) with $d(j,k)=t_1$, and $j',k$ satisfy the assumption in Case (b) with $d(j',k)=t_2$. Conditioned on $\sigma_k$, we know $\sigma_j$ and $\sigma_j'$ are independent. Then  we have 
\begin{align*}
\mathbb E[\sigma_j\sigma_{j'}\mid \mathcal C_i]&=\mathbb E[\mathbb E[\sigma_j\sigma_{j'}\sigma_k^2\mid \sigma_k, \mathcal C_i]\mid \mathcal C_i]=\mathbb E\left[\mathbb E[\sigma_j\sigma_k \mid \sigma_k,\mathcal C_i]\cdot \mathbb E[\sigma_{j'}\sigma_k \mid \sigma_k,\mathcal C_i]	\mid \mathcal C_i\right]\\
&= \left(\frac{\beta}{\alpha}\right)^{t_1+t_2}\leq \left(\frac{\beta}{\alpha}\right)^{d(j,j')-1},
\end{align*}
where the last line follows from the triangle inequality $d(j,k)+d(j',k)\geq d(j,j')$ and the condition $\beta< \alpha$.

(d) If $j,j'$ are in the same hyperedge, then $d(j,j')=1$ and \eqref{keyclaim} holds trivially.

Combining Cases (a)-(d), \eqref{keyclaim} holds. 
From \eqref{keyclaim} and \eqref{estimatedistance1}-\eqref{estimatedistance4}, we have
\begin{align}
\mathbb E[X_i^2\mathbf{1}_{\Omega}\mid \mathcal C_i] 
\leq &  \sum_{h,h'=0}^l \sum_{\tau=0}^{2(h\wedge h')}\sum_{j:d(i,j)	=2h}\sum_{j':d(i,j')	=2h'}\mathbf{1}_{d(j,j')=2(h+h'-\tau)}\mathbb E[\sigma_j \sigma_j'\mid \mathcal C_i]  R_{i,j}^{(l)}R_{i,j'}^{(l)} \notag\\
\leq &\sum_{h,h'=0}^l \sum_{\tau=0}^{2(h\wedge h')}\sum_{j:d(i,j)	=2h}O(\alpha^{2h'-\tau}\log n)\left(\frac{\beta}{\alpha}\right)^{2(h+h'-\tau)-1} \cdot  O(\alpha^{2l-h-h'}\log^2 n)\notag\\
= & \sum_{h,h'=0}^l\sum_{\tau=0}^{2(h\wedge h')}O(\alpha^{2l+h+h'-\tau}\log^4 n)\left(\frac{\beta}{\alpha}\right)^{2(h+h'-\tau)-1}\notag\\
=&\sum_{h,h'=0}^l\sum_{\tau=0}^{2(h\wedge h')} O(\alpha^{2l}\log^4 n)\cdot (\beta^2/\alpha)^{h+h'-\tau}=O(\beta^{4l}\log^4 n).	\label{Xsquare}
\end{align}
From \eqref{expz} and \eqref{Xsquare}, we have for some $\epsilon>0$,
\begin{align*}
	\mathbb E(\|z\|_2^2 \mid \Omega)=n^{1-\epsilon}O(\alpha^{2l}\beta^{2l}\log^2 n)+O(n\beta^{4l}\log^2n).
\end{align*}
Then by Chebyshev's inequality, asymptotically almost surely, 
\begin{align*}\|z\|_2= & O(n^{1/2-\epsilon/2}\alpha^l \beta^l\log^2 n)+O(n^{1/2}\beta^{2l}\log^2 n)
=(\sqrt{n}\beta^l\log^2n)\cdot O(\beta^l \vee \alpha^l n^{-\epsilon/2} ).
\end{align*}

Recall $l=c\log n$. We have $\beta^{l}=n^{c\log\beta}, \alpha^{l}=n^{c\log \alpha}.$ So $\beta^{l}=n^{-\epsilon'}\alpha^l$ for some constant $\epsilon'>0$.
 Since from \eqref{Blsigma}, $\|B^{(l)}\sigma\|_2=\Theta(\sqrt{n}\beta^{l}),$ we have
 \begin{align}\label{eq:z2}
 	\|z\|_2=O(n^{-\gamma''}\alpha^{l}\|B^{(l)}\sigma\|_2) 
 \end{align}
 for some constant $\gamma''>0$. Combining \eqref{zzprime} with \eqref{eq:z2}, it implies for some constant $\gamma>0$,
 \[\|B^{(l)}B^{(l)}\sigma\|_2=\|z+z'+z''\|_2=O(n^{-\gamma}\alpha^l)\|B^{(l)}\sigma\|_2.
 \]
 Then the upper bound on $\|B^{(l)}B^{(l)}\sigma\|_2$ in \eqref{Bound2}  holds.
\end{proof}

\end{appendix}

 \bibliographystyle{plain}
\bibliography{globalref.bib}

\end{document}